\newtheorem{theorem}{Theorem}[section]
\newtheorem{lemma}[theorem]{Lemma}
\newtheorem{proposition}[theorem]{Proposition}
\newtheorem{corollary}[theorem]{Corollary}
\theoremstyle{definition}
\newtheorem{definition}[theorem]{Definition}
\newtheorem{remark}[theorem]{Remark}
\newtheorem{example}[theorem]{Example}
\numberwithin{equation}{section}
\newcommand{\mg}[1]{\langle #1 \rangle}
\begin{document}

\title{Left-invariant Hermitian connections on Lie groups with almost Hermitian structures}

\author{David N. Pham}
\author{Fei Ye}
\address{Department of Mathematics $\&$ Computer Science, QCC CUNY, Bayside, NY 11364}
\curraddr{}
\email{}

\subjclass[2020]{53B05, 53C15, 32Q60}

\keywords{almost Hermitian manifolds, Hermitian connections, Gauduchon connections, Lie groups}

\dedicatory{}

\begin{abstract}
Left-invariant Hermitian and Gauduchon connections are studied on an arbitrary Lie group $G$ equipped with an arbitrary left-invariant almost Hermitian structure $(\mg{\cdot,\cdot},J)$.  The space of left-invariant Hermitian connections is shown to be in one-to-one correspondence with the space $\wedge^{(1,1)}\mathfrak{g}^\ast\otimes \mathfrak{g}$ of left-invariant 2-forms of type (1,1) (with respect to $J$) with values in $\mathfrak{g}:=\mbox{Lie}(G)$.  Explicit formulas are obtained for the torsion components of every Hermitian and Gauduchon connection with respect to a convenient choice of left-invariant frame on $G$.  The curvature of Gauduchon connections is studied for the special case $G=H\times A$, where $H$ is an arbitrary $n$-dimensional Lie group, $A$ is an arbitrary $n$-dimensional abelian Lie group, and the almost complex structure is totally real with respect to $\mathfrak{h}:=\mbox{Lie}(H)$.  When $H$ is compact, it is shown that $H\times A$ admits a left-invariant (strictly) almost Hermitian structure $(\mg{\cdot,\cdot},J)$ such that the Gauduchon connection corresponding to the Strominger (or Bismut) connection in the integrable case is precisely the trivial left-invariant connection and, in addition, has totally skew-symmetric torsion.  The almost Hermitian structure $(\mg{\cdot,\cdot},J)$ on $H\times A$ is shown to satisfy the \textit{strong K\"{a}hler with torsion} condition.  Furthermore, the affine line of Gauduchon connections on $H\times A$ with the aforementioned almost Hermitian structure is also shown to contain a (nontrivial) flat connection.
\end{abstract}

\date{}

\maketitle

\section{Introduction}

Let $(M,g,J)$ be an almost Hermitian manifold.  A Hermitian connection on $(M,g,J)$ is a connection $\nabla$ on $M$ satisfying $\nabla g=0$ and $\nabla J=0$. The space of Hermitian connections on $(M,g,J)$ is large; it is in one-to-one correspondence with the space $\Omega^{(1,1)}(M;TM)$ (the space of real 2-forms of type (1,1) with respect to $J$ with values in $TM$).  This served as motivation for Gauduchon to introduce a distinguished class of Hermitian connections in \cite{Gauduchon1997}. This distinguished class of Hermitian connections (now called \textit{Gauduchon connections}) is parameterized by $t\in \mathbb{R}$ and forms an affine line of Hermitian connections. When $J$ is integrable, this affine line of Hermitian connections contains both the Chern connection \cite{Chern1979} and the Strominger (or Bismut) connection \cite{Strominger1986,Bismut1989}.  As metric compatible connections, both the Chern and Strominger/Bismut connections are uniquely defined by their torsion tensors.  The Chern connection can be defined as the Hermitian connection whose torsion $T\in \Omega^2(M;TM)$ has vanishing (1,1)-part: $T^{(1,1)}=0$; the Strominger/Bismut connection is characterized by the condition that its torsion $T$ is totally skew-symmetric, that is, $g(T(X,Y),Z)$ is skew-symmetric in $X$, $Y$, and $Z$.  It is interesting to note that the torsion condition $T^{(1,1)}=0$ can always be satisfied (regardless of whether $J$ is integrable or not) whereas the existence of a Hermitian connection with totally skew-symmetric torsion does not hold in general for the non-integrable case (see Appendix \ref{appendix:Bismut} for details).  Historically, the Strominger/Bismut connection was introduced first by Strominger in \cite{Strominger1986} in the context of string theory and later, independently, by Bismut in \cite{Bismut1989} for purely geometric reasons.

It appears that Hermitian connections and, in particular, the smaller class of Gauduchon connections, have been studied primarily for Hermitian manifolds, that is, in the integrable case (see e.g. \cite{YZ2018,Vezzoni2019,Lafuente2022,AV2022} and the references therein).   Motivated by this fact, the current paper studies Hermitian and Gauduchon connections on (strictly) almost Hermitian manifolds. As Lie groups are considerably simpler than general smooth manifolds (since they are inherently algebraic objects by nature), Lie groups serve as an ideal setting for exploring and testing new ideas in differential geometry.  For this reason, we focus our study of Hermitian and Gauduchon connections on Lie groups equipped with left-invariant almost Hermitian structures. 

For a Lie group $G$ equipped with a left-invariant almost Hermitian structure $(\mg{\cdot,\cdot},J)$, the space of left-invariant Hermitian connections is shown to be in one-to-one correspondence with the space $\wedge^{(1,1)}\mathfrak{g}^\ast\otimes \mathfrak{g}$ of left-invariant 2-forms of type (1,1) (with respect to $J$) with values in $\mathfrak{g}:=\mbox{Lie}(G)$ (see Corollary \ref{cor:leftinvariantHermitianOneToOne}). We obtain explicit formulas for the torsion components of every Hermitian and Gauduchon connection with respect to a convenient choice of left-invariant frame on $G$ (which we call a \textit{standard frame}).  The curvature of a Gauduchon connection is studied for the special case $G=H\times A$, where $H$ is an arbitrary $n$-dimensional Lie group, $A$ is an arbitrary $n$-dimensional abelian Lie group, and the almost complex structure $J$ satisfies $J\mathfrak{h}=\mathfrak{a}$ (where $\mathfrak{h}:=\mbox{Lie}(H)$ and $\mathfrak{a}:=\mbox{Lie}(A)$).  Following the terminology of \cite{CPO2010,CPO2011}, $J$ is said to be (almost) totally real with respect to $\mathfrak{h}$.  When $H$ is compact, it is shown that $H\times A$ can be equipped with a left-invariant (strictly) almost Hermitian structure such that the Gauduchon connection corresponding to the Strominger (or Bismut) connection in the integrable case is precisely the trivial left-invariant connection and, in addition, has totally skew-symmetric torsion.   The almost Hermitian structure $(\mg{\cdot,\cdot},J)$ on $H\times A$ is shown to satisfy the \textit{strong K\"{a}hler with torsion} condition (cf \cite{Fino2009}). Furthermore, we show that the affine line of Gauduchon connections on $H\times A$ with the aforementioned almost Hermitian structure also contains a (nontrivial) flat connection.

The rest of the paper is organized as follows.  In Section 2, we give a self-contained review of Hermitian connections where the general formula for the torsion of a Hermitian connection is derived. In addition, notation and conventions are established which will be used in the remainder of the paper. The free parameter in the torsion formula is shown to be an element $\alpha\in \Omega^{(1,1)}(M;TM)$ which, in turn, establishes a one-to-one correspondence between the space of Hermitian connections and the space $\Omega^{(1,1)}(M;TM)$.  We note that the formula obtained in Section 2 differs in appearance from that obtained originally by Gauduchon in \cite{Gauduchon1997}, but is shown to be equivalent.  From the general torsion formula, we obtain the torsion formula for the Gauducuhon connections by an appropriate choice of $\alpha^t\in \Omega^{(1,1)}(M;TM)$ which is parameterized by $t\in \mathbb{R}$. In Section 3, we turn our attention to left-invariant Hermitian and Gauduchon connections on arbitrary Lie groups equipped with left-invariant almost Hermitian structures. Explicit formulas are obtained for the components of the torsion tensors relative to a so-called ``standard frame".  (As some of the formulas are on the lengthy side, we also verified the formulas numerically using the program Octave to ensure that the formulas worked as expected.)  Lastly, in Section 4, we study the curvature of Lie groups of the form $H\times A$ equipped with left-invariant almost Hermitian structures where the almost complex structure is totally real with respect to $\mathfrak{h}$.

\section{Preliminaries}
\label{sec:Preliminaries}
\noindent In this section, we review the relevant background and establish notation and conventions that we will use for the rest of the paper. Let $(M,g,J,\omega)$ be an almost Hermitian manifold where 
\begin{itemize}
    \item $g$ is the Hermitian metric
    \item $J$ is the almost complex structure
    \item $\omega$ is the fundamental 2-form defined by $\omega(\cdot,\cdot):=g(J\cdot, \cdot)$
\end{itemize}
For a vector field $X$ (real or complex) on $M$, we let $X=X^++X^-$ denote its decomposition into its (1,0) and (0,1) components.  Explicitly,
\begin{equation}
    X^+:=\frac{1}{2}(X-iJX),\hspace*{0.2in}X^-:=\frac{1}{2}(X+iJX)
\end{equation}
and one easily verifies that $JX^+=iX^+$ and $JX^-=-iX^-$.  We adopt the following convention for the Nijenhuis tensor:
\begin{equation}
    \label{def:Nijenhuis}
    N(X,Y):=J[JX,Y]+J[X,JY]+[X,Y]-[JX,JY].
\end{equation}
\subsection{\texorpdfstring{$TM$}{TM}-valued forms of type (1,1)}
\label{subsection(1,1)-forms}
Let $\Omega^{(1,1)}(M;TM)$ to denote the space of real (1,1)-forms with values in $TM$.  For a 3-form $\psi\in \Omega^3(M)$, we follow \cite{Gauduchon1997} and decompose $\psi$ as
$$
\psi = \psi^++\psi^-
$$
where $\psi^+$ denotes its $(2,1)+(1,2)$-part and $\psi^-$ denotes its $(3,0)+(0,3)$-part.  

When $J$ is integrable, we have the following result:
\begin{proposition}
    \label{prop:dOmegaNzero}
    If $J$ is integrable, then $(d\omega)^+=d\omega$ and 
    \begin{equation}
        \label{eq:dOmegaNzeroProp1}
        d\omega(JX,JY,JZ)=d\omega(JX,Y,Z)+d\omega(X,JY,Z)+d\omega(X,Y,JZ).
    \end{equation}
\end{proposition}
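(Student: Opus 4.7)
My plan is to prove the two assertions of the proposition in turn. The first uses integrability of $J$ crucially; the second is a purely algebraic consequence once the first is in hand.

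For $(d\omega)^{+}=d\omega$, equivalently $(d\omega)^{-}=0$, I would extend scalars to $\mathbb{C}$ and show that $d\omega(X^{-},Y^{-},Z^{-})=0$ for all real vector fields $X,Y,Z$; the vanishing of the $(3,0)$-part then follows by complex conjugation. Inserting $X^{-},Y^{-},Z^{-}$ into the Cartan formula
\[
d\omega(X,Y,Z)=\sum_{\mathrm{cyc}}X\omega(Y,Z)-\sum_{\mathrm{cyc}}\omega([X,Y],Z),
\]
the $(1,1)$-type of $\omega$ forces $\omega(Y^{-},Z^{-})=0$ (killing every vector-field-derivative term), while in each bracket term the same type consideration forces $\omega([X^{-},Y^{-}],Z^{-})=\omega([X^{-},Y^{-}]^{+},Z^{-})$. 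A short computation from \eqref{def:Nijenhuis} shows that $N\equiv 0$ is equivalent to $[X^{-},Y^{-}]^{+}\equiv 0$ for all real $X,Y$, whereupon all three bracket terms vanish.

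The identity \eqref{eq:dOmegaNzeroProp1} is then a formal consequence of this type restriction. On real $3$-forms, consider the two operators $\psi\mapsto\psi(J\cdot,J\cdot,J\cdot)$ and $\tilde J\psi:=\psi(J\cdot,\cdot,\cdot)+\psi(\cdot,J\cdot,\cdot)+\psi(\cdot,\cdot,J\cdot)$. On a form of pure type $(p,q)$ with $p+q=3$ they act by the scalars $i^{p}(-i)^{q}$ and $i(p-q)$ respectively. Tabulating on the four pure types $(3,0),(2,1),(1,2),(0,3)$ yields $(-i,i,-i,i)$ versus $(3i,i,-i,-3i)$, which coincide exactly on $(2,1)+(1,2)$. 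Since the first claim places $d\omega$ in this subspace, applying the equality of the two operators to $d\omega$ is precisely \eqref{eq:dOmegaNzeroProp1}.

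The main obstacle is verifying the reformulation $N=0\iff[X^{-},Y^{-}]^{+}\equiv 0$. A direct expansion using $X^{\pm}=\tfrac12(X\mp iJX)$ identifies the real and imaginary parts of $[X^{+},Y^{+}]^{-}$ with $\tfrac18 N(X,Y)$ and $-\tfrac18 N(X,JY)$, both of which vanish when $N$ does. A slicker alternative would be to invoke the standard fact that integrability of $J$ implies $d=\partial+\bar\partial$, whence $\omega\in\wedge^{(1,1)}$ immediately forces $d\omega\in\Omega^{(2,1)}\oplus\Omega^{(1,2)}$; the direct route above, however, is more in keeping with the self-contained tone of the paper.
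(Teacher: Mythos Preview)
Your argument is correct and, for the first assertion, mirrors the paper's proof almost exactly: both reduce to showing $d\omega$ vanishes on triples of like-type eigenvectors via the Cartan formula, using that integrability forces $[X^{\pm},Y^{\pm}]$ to remain of the same type (the paper phrases this as ``the bracket of $(1,0)$-fields is $(1,0)$'', you as $[X^{-},Y^{-}]^{+}=0$). For the second assertion your route is a modest repackaging of the paper's: the paper verifies \eqref{eq:dOmegaNzeroProp1} by plugging in $(X^{+},Y^{+},Z^{-})$ and computing both sides directly, whereas you observe abstractly that the two operators $\psi\mapsto\psi(J\cdot,J\cdot,J\cdot)$ and $\tilde J$ have eigenvalues $i^{p}(-i)^{q}$ and $i(p-q)$ on pure $(p,q)$-types, which coincide on $(2,1)+(1,2)$. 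This is the same computation organized eigenvalue-first rather than vector-first; it has the minor advantage of making transparent why the identity fails on the $(3,0)+(0,3)$ part and hence why integrability is needed.
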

\begin{proof}
    Since $J$ is integrable, the Lie bracket of two (1,0)-vector fields is again a (1,0)-vector field and since $\omega$ is (1,1), we have
    $$
        \omega(X^+,Y^+)=\omega([X^+,Y^+],Z^+)=0.
    $$
    The invariant formula for the exterior derivative then implies $d\omega(X^+,Y^+,Z^+)=0$. The same argument also implies that $d\omega(X^-,Y^-,Z^-)=0$.  Of course, the latter can also be seen by conjugating $d\omega(X^+,Y^+,Z^+)=0$ (and using the fact that $\omega$ is real).  This proves that $(d\omega)^+=d\omega$ when $J$ is integrable.  Since $d\omega$ has no (3,0) or (0,3) part and (\ref{eq:dOmegaNzeroProp1}) is totally skew-symmetric on both sides of the equation, to verify (\ref{eq:dOmegaNzeroProp1}) it suffices to check that equality holds when $X=X^+$, $Y=Y^+$, and $Z=Z^-$:
    \begin{align*}
         &d\omega(JX^+,Y^+,Z^-)+d\omega(X^+,JY^+,Z^-)+d\omega(X^+,Y^+,JZ^-)\\
         &=id\omega(X^+,Y^+,Z^-)+id\omega(X^+,Y^+,Z^-)-id\omega(X^+,Y^+,Z^-)\\
         &=id\omega(X^+,Y^+,Z^-)\\
         &=(i)(i)(-i)d\omega(X^+,Y^+,Z^-)\\
         &=d\omega(JX^+,JY^+,JZ^-).
    \end{align*}
    Hence, equality holds whenever the arguments consists of two (1,0)-vectors and one (0,1)-vector.  Conjugation now implies that (\ref{eq:dOmegaNzeroProp1}) also holds whenever the arguments consists of two (0,1)-vectors and one (1,0)-vector.
\end{proof}
\noindent More generally, we have the following:
\begin{proposition}
    \label{prop:3form(2,1)+(1,2)}
    For $\eta\in \Omega^3(M)$, 
    \begin{align}
        \label{eq:3form(2,1)+(1,2)}
        \eta^+(X,Y,Z)=\frac{1}{4}\left[3\eta(X,Y,Z)+\eta(JX,JY,Z)+\eta(JX,Y,JZ)+\eta(X,JY,JZ)\right]
    \end{align}
\end{proposition}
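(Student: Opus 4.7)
The plan is to show that the linear operator $P\colon \Omega^3(M) \to \Omega^3(M)$ defined by the right-hand side of \eqref{eq:3form(2,1)+(1,2)} is precisely the projection $\eta \mapsto \eta^+$ onto the $(2,1)+(1,2)$ component of the type decomposition. Since the identity is $\mathbb{R}$-trilinear and alternating in $(X,Y,Z)$, I would extend $J$ and $\eta$ $\mathbb{C}$-linearly to the complexified tangent bundle and then, using the decomposition $X = X^+ + X^-$ (and similarly for $Y$, $Z$), reduce to checking the identity when each argument is of pure type.

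For a vector $V$ of pure type one has $JV = \epsilon_V\, iV$, where $\epsilon_V = +1$ for type $(1,0)$ and $\epsilon_V = -1$ for type $(0,1)$. Substituting into the right-hand side of \eqref{eq:3form(2,1)+(1,2)}, each paired $J$ produces a factor of $i^2 = -1$, so that
\[
P\eta(X,Y,Z) \;=\; \frac{1}{4}\bigl[\,3 - \epsilon_X\epsilon_Y - \epsilon_X\epsilon_Z - \epsilon_Y\epsilon_Z\,\bigr]\,\eta(X,Y,Z).
\]
A short case check finishes the argument. For arguments of type $(3,0)$ or $(0,3)$ all three products $\epsilon_i\epsilon_j$ equal $+1$, so the bracket vanishes and $P\eta(X,Y,Z) = 0$. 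For types $(2,1)$ and $(1,2)$, exactly two of the three products equal $-1$ and the remaining one equals $+1$, so the bracket equals $4$ and $P\eta(X,Y,Z) = \eta(X,Y,Z)$.

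Thus $P\eta$ annihilates the $(3,0)+(0,3)$ part of $\eta$ and acts as the identity on the $(2,1)+(1,2)$ part, i.e.\ $P\eta = \eta^+$, as claimed. There is no substantive obstacle; the proof reduces to a symbolic eigenvalue computation on pure-type arguments. The only mild subtlety is to observe that the $\mathbb{C}$-linear extension of $\eta$ to the complexified tangent bundle is legitimate because $\eta$ is alternating $\mathbb{R}$-trilinear, so both sides of \eqref{eq:3form(2,1)+(1,2)} extend uniquely and the identity on pure-type arguments transfers back to all real arguments by multilinearity.
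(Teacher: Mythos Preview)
Your proof is correct. Both you and the paper use the $\mathbb{C}$-linear extension and the type decomposition, but the directions of the computation differ: the paper starts from the definition of $\eta^+$ as the sum of the six mixed terms $\eta(X^+,Y^+,Z^-)+\cdots$, substitutes $X^\pm=\tfrac12(X\mp iJX)$, and expands until the real formula emerges, whereas you take the right-hand side as an operator $P$ and verify it is the projector by checking eigenvalues on pure-type triples. Your route is a bit more conceptual and avoids the ``tedious calculation'' the paper alludes to; the paper's route has the minor advantage of being purely mechanical with no case analysis. Either way the content is the same elementary linear algebra.
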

\begin{proof}
    This is a straightforward but somewhat tedious calculation.  Expanding 
    \begin{align*}
        \eta^+(X,Y,Z)&=\eta(X^+,Y^+,Z^-)+\eta(X^+,Y^-,Z^+)+\eta(X^-,Y^+,Z^+)\\
        &+\eta(X^-,Y^-,Z^+)+\eta(X^-,Y^+,Z^-)+\eta(X^+,Y^-,Z^-)
    \end{align*}
    and using the fact that $X^{\pm}=\frac{1}{2}(X\mp iJX)$ gives (\ref{eq:3form(2,1)+(1,2)}).
\end{proof}
\noindent Let $\Omega^{3+}(M)$ be the space of real valued 3-forms of type $(2,1)+(1,2)$.  The next result shows that every element of $\Omega^{3+}(M)$ is induced by an element of $\Omega^{(1,1)}(M;TM)$.
\begin{proposition}
    \label{prop:(1,1)to(2,1)+(2,1)}
    Let $F:\Omega^{(1,1)}(M;TM)\rightarrow \Omega^{3+}(M)$, $\alpha\mapsto F(\alpha)$ be the linear map defined by 
    $$
        F(\alpha)(X,Y,Z):=g(\alpha(X,Y),Z)+g(\alpha(Y,Z),X)+g(\alpha(Z,X),Y).
    $$
    Then $F$ is surjective. 
    \end{proposition}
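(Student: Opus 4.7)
The plan is to exhibit an explicit right inverse to $F$. Given $\eta\in\Omega^{3+}(M)$, I would try the candidate $\alpha\in\Omega^2(M;TM)$ defined by
\begin{equation*}
g(\alpha(X,Y),Z):=\tfrac{1}{4}\bigl[\eta(X,Y,Z)+\eta(JX,JY,Z)\bigr],
\end{equation*}
where $\alpha(X,Y)$ is determined pointwise by the right-hand side via nondegeneracy of $g$. Two properties would be immediate. Skew-symmetry $\alpha(Y,X)=-\alpha(X,Y)$ follows from the skew-symmetry of $\eta$ in its first two arguments. The type $(1,1)$ condition $\alpha(JX,JY)=\alpha(X,Y)$ follows by replacing $X,Y$ with $JX,JY$ and invoking $J^2=-I$, which sends $\eta(J^2X,J^2Y,Z)$ back to $\eta(X,Y,Z)$ and swaps the two summands in the definition.

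The essential step is to verify $F(\alpha)=\eta$. I would substitute the definition of $\alpha$ into
\begin{equation*}
F(\alpha)(X,Y,Z)=g(\alpha(X,Y),Z)+g(\alpha(Y,Z),X)+g(\alpha(Z,X),Y)
\end{equation*}
and reorganize the six resulting terms using the cyclic invariance $\eta(Y,Z,X)=\eta(X,Y,Z)=\eta(Z,X,Y)$ of a 3-form. The three ``plain'' terms collapse to $3\eta(X,Y,Z)$, while a further cyclic rearrangement identifies the three ``twisted'' terms with $\eta(JX,JY,Z)$, $\eta(X,JY,JZ)$, and $\eta(JX,Y,JZ)$. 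This yields
\begin{equation*}
4F(\alpha)(X,Y,Z)=3\eta(X,Y,Z)+\eta(JX,JY,Z)+\eta(JX,Y,JZ)+\eta(X,JY,JZ),
\end{equation*}
and the right-hand side is precisely $4\eta^+(X,Y,Z)$ by Proposition \ref{prop:3form(2,1)+(1,2)}. Since $\eta\in\Omega^{3+}(M)$ means $\eta=\eta^+$, this equals $4\eta(X,Y,Z)$, so $F(\alpha)=\eta$.

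The only real obstacle is the bookkeeping needed to match the six-term cyclic sum against Proposition \ref{prop:3form(2,1)+(1,2)}; nothing deeper is required. Morally, the factor $\tfrac{1}{4}$ and the symmetrizer $\eta+\eta(J\cdot,J\cdot,\cdot)$ are forced: the naive choice $g(\alpha(X,Y),Z)=\tfrac{1}{3}\eta(X,Y,Z)$ makes $F(\alpha)=\eta$ trivially but fails to give a type $(1,1)$ form, and averaging over the $J\otimes J$ action on the first two slots is the minimal fix. That this averaging still lands on $\eta$ after applying $F$ is exactly what Proposition \ref{prop:3form(2,1)+(1,2)} supplies via the identity $\eta(X,Y,Z)=\eta(JX,JY,Z)+\eta(JX,Y,JZ)+\eta(X,JY,JZ)$ valid for $\eta=\eta^+$.
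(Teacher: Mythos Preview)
Your proof is correct and follows essentially the same approach as the paper: you construct the identical right inverse $g(\alpha(X,Y),Z)=\tfrac{1}{4}[\eta(X,Y,Z)+\eta(JX,JY,Z)]$, verify it is of type $(1,1)$, and invoke Proposition~\ref{prop:3form(2,1)+(1,2)} to conclude $F(\alpha)=\eta^+=\eta$. The only difference is that you spell out the six-term cyclic computation explicitly, whereas the paper leaves that step to the reader.
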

    \begin{proof}
        Since $\alpha$ is skew-symmetric and (1,1), it follows immediately that the $F(\alpha)$ is totally skew-symmetric and of type $(2,1)+(1,2)$. To see that the map is surjective, let $\eta\in \Omega^{3+}(M)$.  Define $\alpha_\eta\in \Omega^{(1,1)}(M;TM)$ via
        $$
            g(\alpha_\eta(X,Y),Z)=\frac{1}{4}[\eta(X,Y,Z)+\eta(JX,JY,Z)].
        $$
        From the definition of $\alpha_\eta$, we have $\alpha_\eta(JX,JY)=\alpha(X,Y)$ which shows that $\alpha_\eta$ is (1,1).  Using Proposition \ref{prop:3form(2,1)+(1,2)}, one has $F(\alpha_\eta)=\eta^+=\eta$.
    \end{proof}
 We define $\Omega_s^{(1,1)}(M;TM):=\mbox{ker}~ F$. Extend the metric $g$ to the vector bundle of real (1,1)-forms with values in $TM$ and define $\Omega^{(1,1)}_a(M;TM)$ to be the orthogonal complement of $\Omega_s^{(1,1)}(M;TM)$ with respect to $g$:
  $$
  \Omega^{(1,1)}(M;TM)=\Omega^{(1,1)}_s(M;TM)\oplus \Omega^{(1,1)}_a(M;TM)
  $$
\begin{corollary}
    \label{cor:(1,1)-aIsomorphism}
    The linear map $F:\Omega^{(1,1)}(M;TM)\rightarrow \Omega^{3+}(M)$ restricted to $\Omega^{(1,1)}_a(M;TM)$ is an isomorphism.
\end{corollary}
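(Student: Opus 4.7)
The plan is to deduce the corollary directly from Proposition \ref{prop:(1,1)to(2,1)+(2,1)} together with the defining orthogonal decomposition; essentially no new computation is required, only the first-isomorphism-theorem-style argument applied fiberwise.

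First I would check injectivity of $F$ restricted to $\Omega^{(1,1)}_a(M;TM)$. Suppose $\alpha\in \Omega^{(1,1)}_a(M;TM)$ satisfies $F(\alpha)=0$. Then by the definition $\Omega^{(1,1)}_s(M;TM)=\ker F$, we have $\alpha\in \Omega^{(1,1)}_s(M;TM)$; but $\alpha$ also lies in the orthogonal complement $\Omega^{(1,1)}_a(M;TM)$. Since $g$ induces a fiberwise positive-definite inner product on the bundle of real $(1,1)$-forms with values in $TM$, the only section orthogonal to itself is zero, so $\alpha=0$.

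Next, for surjectivity, take any $\eta\in \Omega^{3+}(M)$. Proposition \ref{prop:(1,1)to(2,1)+(2,1)} produces some $\alpha\in \Omega^{(1,1)}(M;TM)$ with $F(\alpha)=\eta$. Using the direct sum decomposition, write $\alpha=\alpha_s+\alpha_a$ with $\alpha_s\in \Omega^{(1,1)}_s(M;TM)$ and $\alpha_a\in \Omega^{(1,1)}_a(M;TM)$. Since $F(\alpha_s)=0$ by definition of $\Omega^{(1,1)}_s$, we obtain $F(\alpha_a)=F(\alpha)=\eta$, so the restriction of $F$ to $\Omega^{(1,1)}_a(M;TM)$ hits all of $\Omega^{3+}(M)$.

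The only point worth confirming, which I do not view as a real obstacle, is that the orthogonal decomposition is genuinely available at the level of sections. This follows because $\Omega^{(1,1)}_s(M;TM)$ is cut out fiberwise by the smooth linear conditions $F(\alpha)=0$, hence is a smooth subbundle of $\wedge^{(1,1)}T^*M\otimes TM$, and the fiberwise orthogonal complement with respect to the smooth positive-definite metric induced by $g$ is then also a smooth subbundle; taking global sections yields the required splitting. With this in hand, the two paragraphs above give the isomorphism.
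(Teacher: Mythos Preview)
Your argument is correct and is precisely the standard first-isomorphism-theorem reasoning that the paper implicitly relies on; the paper states the corollary without proof, treating it as an immediate consequence of Proposition~\ref{prop:(1,1)to(2,1)+(2,1)} and the definition of $\Omega^{(1,1)}_s(M;TM)$ and its orthogonal complement. Your additional remark that $\ker F$ is a smooth subbundle (so that the orthogonal splitting exists at the level of sections) is a welcome clarification, and it is justified here because the explicit right inverse $\eta\mapsto\alpha_\eta$ from the proof of Proposition~\ref{prop:(1,1)to(2,1)+(2,1)} shows $F$ has constant fiberwise rank.
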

\noindent For convenience, we record the following fact:
\begin{proposition}
    \label{prop:Omega(1,1)s}
    Let $\alpha\in \Omega^{(1,1)}(M;TM)$.  Then $\alpha\in \Omega^{(1,1)}_s(M;TM)$ if and only if 
    \begin{equation}
     \label{eq:Omega(1,1)s}
        \omega(\alpha(X,Y),Z)+\omega(\alpha(Y,Z),X)+\omega(\alpha(Z,X),Y)=0.
    \end{equation}
\end{proposition}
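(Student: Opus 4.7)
The plan is to show that the trilinear form in the proposition is, up to a sign and a reparametrization by $J$, exactly $F(\alpha)$. Concretely, I will show the identity
\[
F(\alpha)(JX,JY,JZ) = -\bigl[\omega(\alpha(X,Y),Z)+\omega(\alpha(Y,Z),X)+\omega(\alpha(Z,X),Y)\bigr].
\]
Since the map $(X,Y,Z)\mapsto(JX,JY,JZ)$ is a bijection of $(TM)^{\oplus 3}$, this identity shows that $F(\alpha)$ vanishes identically if and only if the right-hand side vanishes identically, which is exactly the desired equivalence $\alpha\in\ker F = \Omega^{(1,1)}_s(M;TM) \iff (\ref{eq:Omega(1,1)s})$.

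The first ingredient is a characterization of the $(1,1)$ condition: a $TM$-valued real $2$-form $\alpha$ is of type $(1,1)$ iff $\alpha(X^+,Y^+)=0=\alpha(X^-,Y^-)$, which a short computation with $JX^\pm = \pm i X^\pm$ shows is equivalent to $\alpha(JX,JY)=\alpha(X,Y)$ for all real $X,Y$. The second ingredient is the basic compatibility identity for the Hermitian metric,
\[
g(A,JB) = -g(JA,B) = -\omega(A,B),
\]
obtained from $g(JA,JB)=g(A,B)$ and $J^2=-1$.

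With these in hand, the key step is direct: using the identity $\alpha(JX,JY)=\alpha(X,Y)$ to simplify the first slot of $\alpha$ in each term, and then using $g(\cdot,JB)=-\omega(\cdot,B)$ to peel the $J$ off the last slot of $g$, one computes
\begin{align*}
F(\alpha)(JX,JY,JZ) &= g(\alpha(JX,JY),JZ) + g(\alpha(JY,JZ),JX) + g(\alpha(JZ,JX),JY)\\
&= g(\alpha(X,Y),JZ) + g(\alpha(Y,Z),JX) + g(\alpha(Z,X),JY)\\
&= -\omega(\alpha(X,Y),Z) - \omega(\alpha(Y,Z),X) - \omega(\alpha(Z,X),Y),
\end{align*}
which is the claimed identity. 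There is no real obstacle here—only the bookkeeping of signs from $J^2=-1$—so the equivalence follows immediately: if $F(\alpha)=0$ then the $\omega$-cyclic sum is zero by the identity, and conversely, if the $\omega$-cyclic sum vanishes identically then $F(\alpha)(JX,JY,JZ)=0$ for all $X,Y,Z$, and replacing $(X,Y,Z)$ by $(-JX,-JY,-JZ)$ (so that $(JX,JY,JZ)$ becomes $(X,Y,Z)$) gives $F(\alpha)=0$.
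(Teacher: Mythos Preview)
Your proof is correct and is essentially the same as the paper's: both use the $(1,1)$ condition $\alpha(JX,JY)=\alpha(X,Y)$ together with $g(\cdot,J\cdot)=-\omega(\cdot,\cdot)$ to identify the $\omega$-cyclic sum with $-F(\alpha)(JX,JY,JZ)$, from which the equivalence follows by bijectivity of $J$. The only difference is that you spell out the bijectivity step explicitly, whereas the paper leaves it implicit.
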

\begin{proof}
    Using $-g(\cdot,J\cdot)=\omega(\cdot,\cdot)$ and the fact that $\alpha(J\cdot,J\cdot)=\alpha(\cdot,\cdot)$, we have
    \begin{align*}
        \omega(\alpha(X,Y),Z)&+\omega(\alpha(Y,Z),X)+\omega(\alpha(Z,X),Y)\\
            &=-g(\alpha(JX,JY),JZ)-g(\alpha(JY,JZ),JX)-g(\alpha(JZ,JX),JY)
    \end{align*}
    which implies the proposition.
\end{proof}

\subsection{Metric compatibility and torsion}
\noindent Recall that for a linear connection $\nabla$ on $M$, the torsion of $\nabla$ is defined by
\begin{equation}
    T^\nabla(X,Y):=\nabla_XY-\nabla_YX-[X,Y].
\end{equation}
We recall the following well known fact:
\begin{proposition}
    \label{prop:TorsionUniqueness}
    For any Riemannian manifold $(M,g)$ and any $TM$-valued $2$-form $T\in \Omega^2(M;TM)$, there exists a unique connection $\nabla$ satisfying $\nabla g=0$ and $T^\nabla=T$.  Explicitly, $\nabla$ is defined by
    \begin{equation}
    \begin{aligned}
    2g(\nabla_XY,Z)
        =& Xg(Y,Z)-Zg(X,Y)+Yg(Z,X)\\
        & + g([X,Y],Z)-g([Y,Z],X)-g([X,Z],Y)\\
        & + g(T(X,Y),Z)-g(T(Y,Z),X)-g(T(X,Z),Y)        
    \end{aligned} \label{eq:UniqueConnectionTorsionEq}
    \end{equation}
\end{proposition}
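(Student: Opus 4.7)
The plan is to follow the classical Koszul-formula strategy, adapted to allow a prescribed torsion rather than forcing torsion to vanish. Uniqueness and the explicit formula will come out together; existence will then be checked by defining $\nabla$ via the right-hand side of \eqref{eq:UniqueConnectionTorsionEq} and verifying that the resulting object is a metric connection with the prescribed torsion.

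For the uniqueness/formula step, suppose $\nabla$ satisfies $\nabla g=0$ and $T^\nabla=T$. Metric compatibility yields
\begin{align*}
Xg(Y,Z) &= g(\nabla_X Y,Z)+g(Y,\nabla_X Z),\\
Yg(Z,X) &= g(\nabla_Y Z,X)+g(Z,\nabla_Y X),\\
Zg(X,Y) &= g(\nabla_Z X,Y)+g(X,\nabla_Z Y).
\end{align*}
Take the combination (first) $+$ (second) $-$ (third), and then on the right-hand side replace each $\nabla_A B-\nabla_B A$ by $T(A,B)+[A,B]$. After grouping the surviving $\nabla$ terms (which collapse to $2g(\nabla_X Y,Z)$), this rearrangement is exactly the identity \eqref{eq:UniqueConnectionTorsionEq}. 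Since $g$ is nondegenerate, this determines $\nabla_X Y$ uniquely, proving both uniqueness and that any solution must be given by the stated formula.

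For existence, I would \emph{define} $\nabla_X Y$ by \eqref{eq:UniqueConnectionTorsionEq} and check the three required properties. Tensoriality in $X$ and the Leibniz rule in $Y$ are routine computations: replacing $X$ by $fX$ produces terms of the form $(Yf)g(X,Z)-(Zf)g(X,Y)$ from the first line and matching terms with opposite sign from the $g([\cdot,\cdot],\cdot)$ line (using $[fX,Y]=f[X,Y]-(Yf)X$), and the torsion line is $C^\infty(M)$-bilinear since $T$ is a tensor; similarly for $Y\mapsto fY$ the derivative terms $(Xf)g(Y,Z)$ appear from the first line and survive. Metric compatibility $\nabla g=0$ is verified by writing out $Xg(Y,Z)-g(\nabla_X Y,Z)-g(Y,\nabla_X Z)$ using the formula twice; the Lie-bracket and torsion contributions cancel in pairs under the swap $Y\leftrightarrow Z$ because each is swept into a symmetric combination. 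Finally, $T^\nabla=T$ follows from computing $g(\nabla_X Y-\nabla_Y X-[X,Y],Z)$ from the formula: the Lie-bracket terms collapse to $g([X,Y],Z)$, the metric-derivative terms cancel by antisymmetrization in $X\leftrightarrow Y$, and the torsion terms collapse to $g(T(X,Y),Z)$.

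The only mildly delicate point is bookkeeping the cancellations in the last two checks, but no real obstacle arises because the formula is engineered precisely so that each of the three types of terms (derivatives of $g$, Lie-bracket terms, torsion terms) is independently consistent with metric compatibility and with the torsion constraint. Thus the proof reduces to two bookkeeping computations after the Koszul-style derivation of the formula.
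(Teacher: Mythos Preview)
Your proof is correct and follows the standard Koszul-formula approach adapted to prescribed torsion. Note, however, that the paper does not actually prove this proposition: it is introduced with ``We recall the following well known fact'' and stated without proof, so there is no argument in the paper to compare yours against. Your write-up is exactly the derivation one would expect for this classical result.
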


\subsection{Hermitian connections}
In this section, we give a self-contained review of Hermitian connections.  For a more detailed account, we refer the reader to \cite{Gauduchon1997}.
\begin{definition}
    \label{def:HermitianDef}
    A connection $\nabla$ on an almost Hermitian manifold $(M,g,J,\omega)$ is Hermitian if $\nabla g=0$ and $\nabla J=0$.
\end{definition}
\noindent For convenience, we make the following definition:
\begin{definition}
\label{def:thetaJDef}
    For a $TM$-valued 2-form $\theta\in \Omega^2(M;TM)$, $\theta_J\in \Omega^2(M;TM)$ is defined as
    \[
        \theta_J(X,Y):=J\theta(JX,Y)+J\theta(X,JY)+\theta(X,Y)-\theta(JX,JY)
    \]
\end{definition}
\noindent For convenience, we record some identities related to $\theta_J$. The identities follow by straightforward calculation.
\begin{lemma}
\label{lemma:thetaJBasicIdentities}
    Let $\theta\in \Omega^2(M;TM)$. Then
    \begin{enumerate}
        \item ${(\theta_J)}_J=4\theta_J$,
        \item $\theta_J(X^+, Y^-)=0$,
        \item $\frac{1}{2}\theta_J(X^+, Y^+)=\theta(X^+, Y^+)+iJ\theta(X^+, Y^+)$,
        \item $J\theta_J(X^+, Y^+)=-i\theta_J(X^+, Y^+)$.
    \end{enumerate}
\end{lemma}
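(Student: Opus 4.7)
The plan is to handle the four identities as direct computations from the definition of $\theta_J$ together with $J^2 = -I$, and to prove them in the order (2), (3), (4), (1), since the last one falls out cleanly from the previous three.

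First I would prove (2). Applying the definition and using $JX^+ = iX^+$ and $JY^- = -iY^-$, the four terms become $iJ\theta(X^+,Y^-)$, $-iJ\theta(X^+,Y^-)$, $\theta(X^+,Y^-)$, and $-\theta(X^+,Y^-)$, which cancel in pairs. Next, for (3), the same substitutions in $\theta_J(X^+,Y^+)$ give $iJ\theta(X^+,Y^+) + iJ\theta(X^+,Y^+) + \theta(X^+,Y^+) + \theta(X^+,Y^+) = 2\bigl[\theta(X^+,Y^+) + iJ\theta(X^+,Y^+)\bigr]$. For (4), I apply $J$ to the formula in (3), use $J^2 = -I$ to get $J\theta_J(X^+,Y^+) = 2\bigl[J\theta(X^+,Y^+) - i\theta(X^+,Y^+)\bigr]$, and then recognize the right-hand side as $-i$ times the expression in (3).

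For (1), rather than expanding $(\theta_J)_J$ term-by-term (which works but is tedious), I would decompose $X = X^+ + X^-$ and $Y = Y^+ + Y^-$ and apply bilinearity. By (2), the mixed terms $(\theta_J)_J(X^+,Y^-)$ and $(\theta_J)_J(X^-,Y^+)$ vanish, so it suffices to check the identity on $(X^+,Y^+)$ and on $(X^-,Y^-)$. Applying (3) with $\theta$ replaced by $\theta_J$ and then using (4) gives
\[
(\theta_J)_J(X^+,Y^+) = 2\bigl[\theta_J(X^+,Y^+) + iJ\theta_J(X^+,Y^+)\bigr] = 2\bigl[\theta_J(X^+,Y^+) + i\cdot(-i)\theta_J(X^+,Y^+)\bigr] = 4\theta_J(X^+,Y^+).
\]
The analogous computation with the roles of $+$ and $-$ swapped (either redone directly or obtained by complex conjugation, noting that $\theta$ and $J$ are real) gives $(\theta_J)_J(X^-,Y^-) = 4\theta_J(X^-,Y^-)$. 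Summing recovers (1).

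The only real obstacle is sign bookkeeping in (1) if one does it by brute force; the eigendecomposition approach above sidesteps that entirely by reducing (1) to two applications of (3) and (4). No step is genuinely hard, so the writeup mostly amounts to presenting the computations cleanly.
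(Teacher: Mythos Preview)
Your proposal is correct. The paper itself offers no detailed argument, stating only that ``the identities follow by straightforward calculation,'' so your write-up is in fact more explicit than the original. Your choice to derive (1) from (2)--(4) via the type decomposition $X=X^++X^-$, $Y=Y^++Y^-$ is a clean shortcut over the brute-force sixteen-term expansion of $(\theta_J)_J$, and entirely consistent with what the paper intends.
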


\begin{lemma}
    \label{lemma:HermitianTorsion}
    If $\nabla$ is a Hermitian connection on $(M,g,J,\omega)$, then its torsion $T$ satisfies
    \begin{itemize}
        \item[(1)] $T_J(X,Y)+N(X,Y)=0$ 
    \item[(2)] $d\omega(X,Y,Z)=\omega(T(X,Y),Z)+\omega(T(Y,Z),X)+\omega(T(Z,X),Y)$.
    \end{itemize}
\end{lemma}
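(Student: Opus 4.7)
The plan is to derive both identities directly from the definitions together with the Hermitian conditions $\nabla g = 0$ and $\nabla J = 0$ (which jointly imply $\nabla\omega = 0$).

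For statement (1), I would expand $T_J(X,Y) + N(X,Y)$ using Definition \ref{def:thetaJDef} and the defining formula (\ref{def:Nijenhuis}) for $N$, then group the four pairs so that each has the form $T(U,V) + [U,V]$. By the definition of torsion, each such pair equals $\nabla_U V - \nabla_V U$, so the expression reduces to a signed sum of eight terms of the shape $J^k \nabla_U V$. Applying $\nabla J = 0$ in the form $\nabla_U JV = J\nabla_U V$ to the four terms in which $J$ sits inside a covariant derivative, together with $J^2 = -\mathrm{id}$, I expect the eight terms to cancel in pairs, yielding $T_J + N = 0$. The main technical care here is sign-chasing with $J$; nothing else is involved.

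For statement (2), I would begin with the invariant formula
\[
d\omega(X,Y,Z) = X\omega(Y,Z) + Y\omega(Z,X) + Z\omega(X,Y) - \omega([X,Y],Z) - \omega([Y,Z],X) - \omega([Z,X],Y).
\]
Since $\nabla g = 0$ and $\nabla J = 0$ imply $\nabla \omega = 0$, each directional derivative expands as, e.g., $X\omega(Y,Z) = \omega(\nabla_X Y, Z) + \omega(Y, \nabla_X Z)$. Collecting terms with a common third argument and using the skew-symmetry of $\omega$, the sum $X\omega(Y,Z) + Y\omega(Z,X) + Z\omega(X,Y)$ rewrites as the cyclic sum of $\omega(\nabla_X Y - \nabla_Y X,\, Z)$. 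Substituting $\nabla_X Y - \nabla_Y X = T(X,Y) + [X,Y]$ splits this into the three desired torsion terms plus bracket terms that exactly cancel the bracket terms from the invariant formula.

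The main obstacle is the bookkeeping in part (1): there are eight $J$-twisted covariant-derivative terms that must be matched up correctly. Part (2) is essentially a routine metric-connection computation and uses nothing beyond $\nabla\omega = 0$; in particular, no integrability or structural hypothesis on $J$ is needed.
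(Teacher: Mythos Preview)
Your proposal is correct and follows essentially the same approach as the paper. For (1) the paper also groups $T_J+N$ into the four $T(U,V)+[U,V]$ pairs, replaces each by $\nabla_UV-\nabla_VU$, and then uses $\nabla J=0$ (it records the result as $-(\nabla_{JX}J)Y+(\nabla_{JY}J)X+J(\nabla_XJ)Y-J(\nabla_YJ)X$ before concluding, whereas you cancel directly); for (2) the paper's computation is the same expansion of $d\omega$ via $\nabla\omega=0$, organized through the identity $d\omega=\sum_{\mathrm{cyc}}(\nabla_X\omega)(Y,Z)+\sum_{\mathrm{cyc}}\omega(T(X,Y),Z)$.
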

\begin{proof}
It follows immediately from Definition \ref{def:HermitianDef} that a connection $\nabla$ is Hermitian if and only if $\nabla J=0$ and $\nabla \omega=0$.  For any vector fields $X$, $Y$, and $Z$, we have
\begin{equation}
(\nabla_X\omega)(Y, Z)= X(\omega(Y, Z))-\omega(\nabla_XY, Z)-\omega(Y, \nabla_XZ)
\label{eq:nablaomega}
\end{equation}
Equation (\ref{eq:nablaomega}) together with the definition of $T$ implies
\begin{align}
\nonumber
d\omega(X, Y, Z)
=&X\omega(Y, Z)-Y\omega(X, Z)+Z\omega(X, Y)\\
\nonumber
&-\omega([X, Y], Z)+\omega([X, Z], Y)-\omega([Y, Z], X)\\
\nonumber
=&X\omega(Y, Z)-Y\omega(X, Z)+Z\omega(X, Y)\\
\nonumber
& - \omega(\nabla_XY, Z)+\omega(\nabla_YX, Z)+\omega(T(X,Y), Z)\\
\nonumber
& + \omega(\nabla_XZ, Y)-\omega(\nabla_ZX, Y)-\omega(T(X,Z), Y)\\
\nonumber
& - \omega(\nabla_YZ, X)+\omega(\nabla_ZY, X)+\omega(T(Y,Z), X)\\
\nonumber
=&(\nabla_X\omega)(Y, Z) + (\nabla_Y\omega)(Z,X) + (\nabla_Z\omega)(X, Y)\\
\label{eq:domegaTorsion}
& +\omega(T(X,Y), Z) + \omega(T(Y,Z), X) + \omega(T(Z,X), Y)
\end{align}

\noindent For any vector fields $X$, $Y$, and $Z$, we have $\nabla_X(JY)=(\nabla_XJ)Y+J\nabla_XY$.
Since
\begin{align*}
    T(X, Y)+[X, Y]=&\nabla_XY-\nabla_{Y}X\\
    T(JX, JY)+[JX, JY]=&\nabla_{JX}(JY)-\nabla_{JY}(JX)\\
    JT(JX, Y)+J[JX, Y]=&J\nabla_{JX}Y-J\nabla_{Y}(JX)\\
    JT(X, JY)+J[X, JY]=&J\nabla_{X}(JY)-J\nabla_{JY}X
\end{align*}
\noindent we have
\begin{align}
\nonumber
T_J(X, Y)+N(X, Y)&= J\nabla_{JX}Y-J\nabla_{Y}(JX) + J\nabla_{X}(JY)-J\nabla_{JY}X\\
\nonumber
& + \nabla_XY-\nabla_{Y}X - \nabla_{JX}(JY)+\nabla_{JY}(JX)\\
\nonumber
=& -\nabla_{JX}(JY)+J\nabla_{JX}Y + \nabla_{JY}(JX)-J\nabla_{JY}X\\
\nonumber
& + J\nabla_{X}(JY) + \nabla_XY - J\nabla_{Y}(JX)-\nabla_{Y}X\\
\label{eq:JTorsion}
=& -(\nabla_{JX}J)Y + (\nabla_{JY}J)X + J(\nabla_{X}J)Y - J(\nabla_{Y}J)X
\end{align}
It follows immediately from (\ref{eq:domegaTorsion}) and (\ref{eq:JTorsion}) that if $\nabla$ is Hermitian then its torsion satisfies conditions (1) and (2) of Lemma \ref{lemma:HermitianTorsion}.    
\end{proof}

\begin{proposition}
    \label{prop:HermitianTorsion}
    A connection $\nabla$ on $(M,g,J,\omega)$ satisfying $\nabla g=0$ is Hermitian if and only if its torsion $T$ satisfies both conditions of Lemma \ref{lemma:HermitianTorsion}.
\end{proposition}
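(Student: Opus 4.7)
My plan is to prove the converse direction only, as the forward implication is Lemma~\ref{lemma:HermitianTorsion}. So assume $\nabla g = 0$ and that the torsion $T$ of $\nabla$ satisfies conditions (1) and (2); the goal is $\nabla J = 0$. The crucial observation is that identities (\ref{eq:domegaTorsion}) and (\ref{eq:JTorsion}) in the proof of Lemma~\ref{lemma:HermitianTorsion} were derived purely from the definitions of torsion and covariant derivative, \emph{without} assuming $\nabla J = 0$. Combining (\ref{eq:domegaTorsion}) with condition~(2) therefore yields the cyclic identity
\[
(\nabla_X\omega)(Y,Z) + (\nabla_Y\omega)(Z,X) + (\nabla_Z\omega)(X,Y) = 0,
\]
and combining (\ref{eq:JTorsion}) with condition~(1) yields
\[
-(\nabla_{JX}J)Y + (\nabla_{JY}J)X + J(\nabla_X J)Y - J(\nabla_Y J)X = 0.
\]

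Set $\mu(X,Y,Z) := (\nabla_X\omega)(Y,Z) = g((\nabla_X J)Y, Z)$, which is skew in the last two slots. The relation $(\nabla_W J)(JY) = -J(\nabla_W J)Y$ follows from $\nabla_W(J^2) = 0$ and does not require $\nabla J = 0$; combined with the $g$-skew-symmetry of $J$ it gives the \emph{automatic} symmetry $\mu(W, JY, Z) = \mu(W, Y, JZ)$. Pairing the second displayed identity above with $Z$ via $g(\cdot, Z)$ and using $g(JA,B) = -g(A, JB)$ rewrites it as
\[
\mu(JX, Y, Z) - \mu(JY, X, Z) + \mu(X, JY, Z) - \mu(Y, JX, Z) = 0.
\]

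The heart of the proof is to deduce $\mu \equiv 0$ from the cyclic identity, the automatic symmetry, and this last four-term identity. The trick is to apply the cyclic identity twice: once with $X \mapsto JX$ and once with $Y \mapsto JY$. Using skew-symmetry in the last two slots, each collapses into a two-term difference, namely $\mu(JX, Y, Z) - \mu(Y, JX, Z) = -\mu(Z, JX, Y)$ and $\mu(X, JY, Z) - \mu(JY, X, Z) = -\mu(Z, X, JY)$ respectively. Adding these and comparing with the four-term identity gives
\[
\mu(Z, JX, Y) + \mu(Z, X, JY) = 0,
\]
and the automatic symmetry $\mu(Z, JX, Y) = \mu(Z, X, JY)$ then forces $\mu(Z, X, JY) \equiv 0$. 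Invertibility of $J$ yields $\mu \equiv 0$, hence $\nabla\omega = 0$, which with $\nabla g = 0$ is equivalent to $\nabla J = 0$.

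The main obstacle is purely combinatorial: one must identify the right $J$-substitutions of the cyclic identity and the correct bracketing of the four-term identity so that everything aligns after invoking skew-symmetry and the automatic $J$-symmetry. All the analytic content is already packaged in the identities (\ref{eq:domegaTorsion}) and (\ref{eq:JTorsion}) from the proof of Lemma~\ref{lemma:HermitianTorsion}.
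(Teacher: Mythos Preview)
Your proof is correct and takes a genuinely different route from the paper. The paper attacks $2g((\nabla_XJ)Y,Z)$ head-on: it expands $2g(\nabla_X(JY),Z)+2g(\nabla_XY,JZ)$ using the Koszul-type formula of Proposition~\ref{prop:TorsionUniqueness}, obtains a long expression in Lie brackets and torsion, and then regroups the terms into three pieces that vanish respectively by condition~(1), condition~(2), and condition~(2) again with the slot $Y\mapsto JY$, $Z\mapsto JZ$. Your argument, by contrast, recycles the identities (\ref{eq:domegaTorsion}) and (\ref{eq:JTorsion}) already derived in the proof of Lemma~\ref{lemma:HermitianTorsion}, reducing the problem to a clean algebraic statement about the trilinear form $\mu=\nabla\omega$: it is skew in the last two slots, cyclic, $J$-symmetric in the last two slots, and satisfies a four-term $J$-identity; you then show any such $\mu$ vanishes by two well-chosen $J$-substitutions into the cyclic identity. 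Your approach is more conceptual and avoids the lengthy Koszul expansion, at the cost of relying on identities proved earlier; the paper's approach is self-contained and direct but computationally heavier.
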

\begin{proof}
From Lemma \ref{lemma:HermitianTorsion}, it remains to show that conditions (1) and (2) are sufficient. So suppose that $\nabla$ is a $g$-compatible connection which satisfies conditions (1) and (2) of Proposition \ref{prop:HermitianTorsion}.

Using Proposition \ref{prop:TorsionUniqueness}, the fact that $\omega$ is $J$-invariant, and $g(\cdot,\cdot)=\omega(\cdot,J\cdot)$, we have
\begin{align*}
    2g((\nabla_XJ)Y,Z)&=2g(\nabla_X(JY),Z)-2g(J\nabla_XY,Z)\\
            &=2g(\nabla_X(JY),Z)+2g(\nabla_XY,JZ)\\
            =& X\omega(JY,JZ)+Z\omega(X,Y)+(JY)\omega(Z,JX)\\
         & + \omega([X,JY],JZ)-\omega([JY,Z],JX)+\omega([X,Z],Y)\\
         & + \omega(T(X,JY),JZ)-\omega(T(JY,Z),JX)+\omega(T(X,Z),Y) \\
         & -X\omega(Y,Z)-(JZ)\omega(X,JY)+Y\omega(JZ,JX)\\
         & - \omega([X,Y],Z)-\omega([Y,JZ],JX)-\omega([X,JZ],JY)\\
         & - \omega(T(X,Y),Z)-\omega(T(Y,JZ),JX)-\omega(T(X,JZ),JY) \\
         &=d\omega(X,Y,Z)+\omega([Y,Z],X)-d\omega(X,JY,JZ)-\omega([JY,JZ],X)\\
         &-\omega([JY,Z],JX)+ \omega(T(X,JY),JZ)-\omega(T(JY,Z),JX)+\omega(T(X,Z),Y)\\
         &-\omega([Y,JZ],JX)- \omega(T(X,Y),Z)-\omega(T(Y,JZ),JX)-\omega(T(X,JZ),JY)
\end{align*}
The last line can be rewritten as
 \begin{align*}
     2g((\nabla_XJ)Y,Z)&=d\omega(X,Y,Z)+\omega([Y,Z],X)-d\omega(X,JY,JZ)-\omega([JY,JZ],X)\\
           &+\omega(J[JY,Z],X)- \omega(JT(X,JY),Z)+\omega(JT(JY,Z),X)+\omega(T(X,Z),Y)\\
           &+\omega(J[Y,JZ],X)- \omega(T(X,Y),Z)+\omega(JT(Y,JZ),X)+\omega(JT(X,JZ),Y)\\
           &=d\omega(X,Y,Z)-d\omega(X,JY,JZ)+\omega(N(Y,Z),X)+\omega(T_J(Y,Z),X)\\
           &-\omega(T(Y,Z),X)+\omega(T(JY,JZ),X)+ \omega(T(X,JY),JZ)+\omega(T(X,Z),Y)\\
           &- \omega(T(X,Y),Z)-\omega(T(X,JZ),JY)    
 \end{align*}
Applying condition (1) of Proposition \ref{prop:HermitianTorsion} yields
$$
\omega(N(Y,Z),X)+\omega(T_J(Y,Z),X)=0
$$
Applying condition (2) of Proposition \ref{prop:HermitianTorsion} gives
$$
d\omega(X,Y,Z)- \omega(T(X,Y),Z)+\omega(T(X,Z),Y)-\omega(T(Y,Z),X)=0
$$
and
$$
-d\omega(X,JY,JZ)+\omega(T(JY,JZ),X)+ \omega(T(X,JY),JZ)-\omega(T(X,JZ),JY) =0
$$
From this, it follows that $2g((\nabla_XJ)Y,Z)=0$ which shows that $\nabla J=0$.  Hence, $\nabla$ is Hermitian. 
\end{proof}
\begin{remark}
    From Proposition \ref{prop:HermitianTorsion}, we recover the well known fact that the Levi-Civitia connection is Hermitian if and only if $J$ is integrable and $\omega$ is closed.  In other words, the Levi-Civitia connection associated to $g$ is Hermitian precisely when $(M,g,J,\omega)$ is K\"{a}hler.
\end{remark}

\begin{lemma}
    \label{lemma:thetaConditions}
    A connection on $(M,g,J,\omega)$ satisfying $\nabla g=0$ is Hermitian if and only if its torsion is of the form $T=-\frac{1}{4}N+\theta$ where $\theta\in \Omega^2(M;TM)$ satisfies the following conditions:
    \begin{itemize}
        \item[(1)] $\theta(X^+,Y^+)$ is (1,0) for all (1,0) vector fields $X^+,Y^+$
        \item[(2)] $d\omega(X^+,Y^+,Z^-)=\omega(\theta(X^+,Y^+),Z^-)+\omega(\theta(Y^+,Z^-),X^+)+\omega(\theta(Z^-,X^+),Y^+)$ for all (1,0) vector fields $X^+,Y^+$ and all (0,1) vector fields $Z^-$.
    \end{itemize}
\end{lemma}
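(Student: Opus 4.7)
My plan is to deduce Lemma \ref{lemma:thetaConditions} from Proposition \ref{prop:HermitianTorsion}, which states that a $g$-compatible connection is Hermitian if and only if its torsion $T$ satisfies (a) $T_J + N = 0$ and (b) the cyclic identity $d\omega(X,Y,Z) = \omega(T(X,Y), Z) + \omega(T(Y,Z), X) + \omega(T(Z,X), Y)$. Writing $T = -\tfrac{1}{4}N + \theta$ with $\theta \in \Omega^2(M; TM)$, the task is to show that the pair (a),(b) is equivalent to the pair (1),(2) of the lemma.

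For (a), the key algebraic input is the identity $N_J = 4N$, which I would verify directly from the definition of $N$ using $N(JX, Y) = N(X, JY) = -JN(X, Y)$ and $N(JX, JY) = -N(X, Y)$. Since $\theta \mapsto \theta_J$ is linear, this gives $T_J + N = -N + \theta_J + N = \theta_J$, so (a) is equivalent to $\theta_J = 0$. Applying Lemma \ref{lemma:thetaJBasicIdentities}(2), $\theta_J$ already vanishes on mixed-type pairs, while part (3) yields $\theta_J(X^+, Y^+) = 4(\theta(X^+, Y^+))^-$. Hence $\theta_J = 0$ holds precisely when $\theta(X^+, Y^+)$ is $(1,0)$ for all $(1,0)$ vector fields $X^+, Y^+$ (the $(0,1)$ analogue being the complex conjugate), which is condition (1).

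For (b), I would decompose both sides by type. Since $d\omega$ and the cyclic torsion sum are real 3-forms, it suffices to verify the identity on arguments of pure type $(3,0)$ and type $(2,1)$; the $(0,3)$ and $(1,2)$ cases follow by complex conjugation. On $(X^+, Y^+, Z^+)$, condition (1) forces $\theta(X^+, Y^+)$ to be $(1,0)$, so $\omega(\theta(X^+, Y^+), Z^+) = 0$ and the $\theta$-contribution drops out of the right-hand side. Meanwhile, using the invariant formula for $d\omega$ together with the identity $[X^+, Y^+]^- = \tfrac{1}{4} N(X^+, Y^+)$---a direct algebraic computation from the definition of $N$ using $JX^\pm = \pm iX^\pm$---yields $d\omega(X^+, Y^+, Z^+) = -\tfrac{1}{4}[\omega(N(X^+, Y^+), Z^+) + \text{cyclic}]$, which matches the $N$-contribution in the right-hand side of (b) exactly. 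Thus (b) on pure $(1,0)$ arguments is automatic given (1). On $(X^+, Y^+, Z^-)$, every $N$-term in the cyclic sum vanishes: $N$ vanishes on mixed-type pairs, and $N(X^+, Y^+)$ is $(0,1)$ which pairs to zero with $Z^-$ under $\omega$. Hence (b) on these arguments reduces exactly to condition (2).

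The main technical obstacle is the case analysis for (b), combined with the two algebraic identities $N_J = 4N$ and $[X^+, Y^+]^- = \tfrac{1}{4} N(X^+, Y^+)$. Both reduce to straightforward applications of $JX^\pm = \pm iX^\pm$ in the definition of $N$. Once these identities are in hand and Lemma \ref{lemma:thetaJBasicIdentities} is invoked to decompose $\theta_J$ by type, both directions of the equivalence follow by running the argument above forward (starting from the hypotheses of Proposition \ref{prop:HermitianTorsion} and setting $\theta := T + \tfrac{1}{4}N$) or backward (starting from a $\theta$ satisfying (1) and (2) and assembling (a) and (b)).
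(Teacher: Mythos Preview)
Your proposal is correct and follows essentially the same approach as the paper's proof: reduce to Proposition~\ref{prop:HermitianTorsion}, use $N_J=4N$ to show condition~(a) is equivalent to $\theta_J=0$ (hence to condition~(1) via Lemma~\ref{lemma:thetaJBasicIdentities}), and then split condition~(b) by type, verifying that the $(3,0)$ case is an identity while the $(2,1)$ case reduces to condition~(2). Your explicit use of $[X^+,Y^+]^- = \tfrac{1}{4}N(X^+,Y^+)$ to handle the $(3,0)$ case is exactly what the paper means when it says equation~(\ref{eq:domegaIdentity}) ``is actually an identity'' upon expanding both sides.
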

\begin{proof}
    By Proposition \ref{prop:HermitianTorsion}, a $g$-compatible connection is Hermitian if and only if its torsion satisfies both conditions of Lemma \ref{lemma:HermitianTorsion}.  Let $T\in \Omega^2(M;TM)$ be any $TM$-valued 2-form and let $\theta:=\frac{1}{4}N+T$.  Then
    $$
        \theta_J=\left(\frac{1}{4}N+T\right)_J=\frac{1}{4}N_J+T_J=\frac{1}{4}(4N)+T_J=N+T.
    $$
    Hence, $T$ satisfies condition (1) of Lemma \ref{lemma:HermitianTorsion} if and only if $\theta_J=0$.  So any $T\in \Omega^2(M;TM)$ which satisfies condition (1) of Lemma \ref{lemma:HermitianTorsion} is of the form $T=-\frac{1}{4}N+\theta$ where $\theta\in \Omega^2(M;TM)$ satisfies $\theta_J=0$.   It follows from Lemma \ref{lemma:thetaJBasicIdentities} that $\theta_J=0$ is equivalent to the statement that $\theta(X^+,Y^+)$ is (1,0) for all $(1,0)$-vector fields $X^+,Y^+$.

    Now suppose $T\in \Omega^2(M;TM)$ satisfies both conditions of Lemma \ref{lemma:HermitianTorsion}.  By the above argument, $T$ must take the form $T=-\frac{1}{4}N+\theta$ for some $\theta\in \Omega^2(M;TM)$ which satisfies condition (1) of Lemma \ref{lemma:thetaConditions}.  Expanding condition (2) of Lemma \ref{lemma:HermitianTorsion} gives
    \begin{align}
        \nonumber
        d\omega(X,Y,Z)&=\omega(T(X,Y),Z)+\omega(T(Y,Z),X)+\omega(T(Z,X),Y)\\
        \label{eq:domegaNtheta}
        &=-\frac{1}{4}\left(\omega(N(X,Y),Z)+\mbox{c.p}\right)+\left(\omega(\theta(X,Y),Z)+\mbox{c.p}\right)
    \end{align}
    where ``c.p" denotes cyclic permutation.  Note that $N(X^+,Y^-)=0$, $N(X^+,Y^+)$ is (0,1), and (by conjugation) $N(X^-,Y^-)$ is (1,0).  Since $\omega$ is a (1,1)-form, equation (\ref{eq:domegaNtheta}) reduces to the following when the arguments are $X^+,Y^+,Z^+$:
    \begin{equation}
        \label{eq:domegaIdentity}
        d\omega(X^+,Y^+,Z^+)=-\frac{1}{4}\left(\omega(N(X^+,Y^+),Z^+)+\mbox{c.p}\right).
    \end{equation}
    Now, by expanding both sides of (\ref{eq:domegaIdentity}), one finds that (\ref{eq:domegaIdentity}) is actually an identity.  Hence, when the arguments are $X^+,Y^+,Z^+$ (or $X^-,Y^-,Z^-$), equation (\ref{eq:domegaNtheta}) is always satisfied (provided $\theta$ satisfies condition (1) of Lemma \ref{lemma:thetaConditions}). 

    When one takes skew-symmetry and conjugation into account (and assuming $\theta$ satisfies condition (1) of Lemma \ref{lemma:thetaConditions}), it follows that (\ref{eq:domegaNtheta}) holds in general if and only if it holds for $X^+,Y^+,Z^-$.  Substituting these arguments into (\ref{eq:domegaNtheta}) and simplifying gives
    \begin{equation}
        \label{eq:X+Y+Z+}
        d\omega(X^+,Y^+,Z^-)=\omega(\theta(X^+,Y^+),Z^-)+\omega(\theta(Y^+,Z^-),X^+)+\omega(\theta(Z^-,X^+),Y^+).
    \end{equation}
    From this, we see that condition (2) of Lemma \ref{lemma:HermitianTorsion} holds if and only if $\theta$ satisfies (\ref{eq:X+Y+Z+}) (as well as condition (1) of Lemma \ref{lemma:thetaConditions}).  This completes the proof.
\end{proof}

\begin{corollary}
    \label{cor:thetaFormula}
    A $g$-compatible connection on $(M,g,J,\omega)$ is Hermitian if and only if its torsion is of the form $T=-\frac{1}{4}N+\alpha+\beta$ where $\alpha$ is an arbitrary real $TM$-valued 2-form of type (1,1) and $\beta$ is a real $TM$ valued 2-form of type $(2,0)+(0,2)$ which is uniquely defined by the following conditions:
    \begin{itemize}
        \item[(1)] $\beta(X^+,Y^+)$ is (1,0) for all (1,0)-vector fields $X^+,Y^+$ and
        \item[(2)] $\omega(\beta(X^+,Y^+),Z^-)=d\omega(X^+,Y^+,Z^-)-\omega(\alpha(Y^+,Z^-),X^+)-\omega(\alpha(Z^-,X^+),Y^+)$
    \end{itemize}
\end{corollary}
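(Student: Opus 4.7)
The plan is to derive this corollary from Lemma~\ref{lemma:thetaConditions} by decomposing the auxiliary 2-form $\theta$ into its pure $J$-types. Every $\theta \in \Omega^2(M; TM)$ splits uniquely as $\theta = \alpha + \beta$, where $\alpha(JX, JY) = \alpha(X, Y)$ is of type $(1,1)$ and $\beta(JX, JY) = -\beta(X, Y)$ is of type $(2,0)+(0,2)$. Equivalently, $\alpha$ vanishes on any pair of $(1,0)$-vectors (and any pair of $(0,1)$-vectors), while $\beta$ vanishes on any mixed $(1,0)$-$(0,1)$ pair. The goal is to rewrite the two conditions of Lemma~\ref{lemma:thetaConditions} in terms of $\alpha$ and $\beta$, and then argue that $\beta$ is uniquely determined once $\alpha$ is chosen.

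Condition (1) of Lemma~\ref{lemma:thetaConditions} requires $\theta(X^+, Y^+)$ to be $(1,0)$; since $\alpha(X^+, Y^+) = 0$, this is equivalent to $\beta(X^+, Y^+)$ being $(1,0)$, which is condition (1) of the corollary. For condition (2) of Lemma~\ref{lemma:thetaConditions}, the vanishing of $\beta$ on mixed arguments gives $\theta(Y^+, Z^-) = \alpha(Y^+, Z^-)$ and $\theta(Z^-, X^+) = \alpha(Z^-, X^+)$, while $\theta(X^+, Y^+) = \beta(X^+, Y^+)$. Substituting these identities into Lemma~\ref{lemma:thetaConditions}(2) and moving the two $\alpha$-terms to the right-hand side yields precisely condition (2) of the corollary.

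It remains to show that, for an arbitrary $\alpha \in \Omega^{(1,1)}(M;TM)$, conditions (1) and (2) of the corollary together with the type constraint on $\beta$ uniquely determine a real $\beta \in \Omega^2(M;TM)$ of type $(2,0)+(0,2)$. Since $\beta(X^+, Y^-) = 0$ by type and $\beta(X^-, Y^-) = \overline{\beta(X^+, Y^+)}$ by reality, it suffices to produce $\beta(X^+, Y^+) \in T^{1,0}M$. The key fact here is that the pairing $U^+ \mapsto \bigl(Z^- \mapsto \omega(U^+, Z^-)\bigr)$ from $T^{1,0}M$ into the complex-linear dual of $T^{0,1}M$ is a fiberwise isomorphism: indeed $\omega(U^+, \overline{U^+}) = i\,g(U^+, \overline{U^+}) \neq 0$ for $U^+ \neq 0$, by positive definiteness of $g$. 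Condition (2) therefore prescribes the linear functional $\omega(\beta(X^+, Y^+), \cdot)|_{T^{0,1}M}$, which uniquely determines the $(1,0)$-vector $\beta(X^+, Y^+)$; bilinearity in $(X^+, Y^+)$ is inherited from the right-hand side. The main (and rather mild) obstacle is verifying that the right-hand side of condition (2) is actually skew-symmetric in $(X^+, Y^+)$, so that the resulting $\beta$ is a genuine 2-form; this reduces to a short computation using the skew-symmetry of $d\omega$ and of $\alpha$.
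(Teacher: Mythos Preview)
Your proof is correct and follows essentially the same approach as the paper: decompose the $\theta$ from Lemma~\ref{lemma:thetaConditions} into its $(1,1)$-part $\alpha$ and its $(2,0)+(0,2)$-part $\beta$, and observe that the two conditions of that lemma translate directly into the two conditions on $\beta$ stated in the corollary. You go a bit further than the paper's proof by explicitly justifying the word ``uniquely'' via the nondegeneracy of the pairing $U^+\mapsto \omega(U^+,\cdot)|_{T^{0,1}M}$ and by checking skew-symmetry in $(X^+,Y^+)$; the paper's proof leaves these points implicit.
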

\begin{proof} 
    From Lemma \ref{lemma:thetaConditions}, a $g$-compatible connection is Hermitian if and only if its torsion is of the form $T=-\frac{1}{4}N+\theta$ where $\theta\in \Omega^2(M;TM)$ satisfies conditions (1) and (2) of Lemma \ref{lemma:thetaConditions}.  Decompose $\theta$ as a sum of its (2,0), (1,1), and (0,2) parts: 
    $$
        \theta=\theta^{(2,0)}+\theta^{(1,1)}+\theta^{(0,2)}.
    $$
    Since $\theta$ is real, we have $\theta^{(0,2)}=\overline{\theta^{(2,0)}}$ and $\overline{\theta^{(1,1)}}=\theta^{(1,1)}$.  Let 
    $$
        \alpha=\theta^{(1,1)},\hspace*{0.2in}\beta=\theta^{(2,0)}+\theta^{(0,2)}.
    $$
    Then conditions (1) and (2) of Lemma \ref{lemma:thetaConditions} are precisely those of Corollary \ref{cor:thetaFormula}. 
\end{proof}

\noindent The next result expresses the torsion formula given by Corollary \ref{cor:thetaFormula} in terms of real vector fields.
\begin{proposition}
    \label{prop:HermitianTorsionFormulaReal}
    A $g$-compatible connection on $(M,g,J,\omega)$ is Hermitian if and only if its torsion $T$ is of the form
    \begin{align*}
        g(T(X,Y),Z)&=-\frac{1}{4}g(N(X,Y),Z)-\frac{1}{2}(d\omega)^+(JX,JY,JZ)+\frac{1}{2}(d\omega)^+(X,Y,JZ)\\
        &+\frac{1}{2}\alpha^+(X,Y,Z)-\frac{1}{2}\alpha^+(JX,JY,Z)+g(\alpha(X,Y),Z)
    \end{align*}
    where $\alpha\in \Omega^{(1,1)}(M;TM)$ is a real arbitrary $TM$-valued (1,1)-form and 
    $$
        \alpha^+(X,Y,Z):=g(\alpha(X,Y),Z)+\mbox{cyclic}.
    $$
\end{proposition}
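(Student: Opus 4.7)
The plan is to leverage Corollary \ref{cor:thetaFormula}, which already says the torsion of any $g$-compatible Hermitian connection takes the form $T = -\tfrac{1}{4}N + \alpha + \beta$, with $\alpha\in\Omega^{(1,1)}(M;TM)$ arbitrary and $\beta$ the unique $TM$-valued $(2,0)+(0,2)$-form determined by conditions (1) and (2) of that corollary. Both directions of the iff then reduce to the single identity
\begin{align*}
g(\beta(X,Y),Z) = -\tfrac{1}{2}(d\omega)^+(JX,JY,JZ) + \tfrac{1}{2}(d\omega)^+(X,Y,JZ) + \tfrac{1}{2}\alpha^+(X,Y,Z) - \tfrac{1}{2}\alpha^+(JX,JY,Z).
\end{align*}

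To derive this, I would first rewrite the left-hand side in terms of the complexified decomposition. Because $\beta$ is of type $(2,0)+(0,2)$, only $\beta(X^+,Y^+)$ and $\beta(X^-,Y^-)$ contribute, and the complexified metric $g$ vanishes on pairs of $(1,0)$ vectors (since $g(V,W)=g(JV,JW) = g(iV,iW) = -g(V,W)$). Combined with the reality of $\beta$ this gives $g(\beta(X,Y),Z) = 2\operatorname{Re} g(\beta(X^+,Y^+), Z^-)$. Since $\beta(X^+,Y^+)$ is $(1,0)$, the convention $\omega(\cdot,\cdot)=g(J\cdot,\cdot)$ yields $g(\beta(X^+,Y^+),Z^-) = -i\,\omega(\beta(X^+,Y^+),Z^-)$, so $g(\beta(X,Y),Z) = 2\operatorname{Im}\omega(\beta(X^+,Y^+),Z^-)$. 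Substituting condition (2) of Corollary \ref{cor:thetaFormula} gives
\begin{align*}
g(\beta(X,Y),Z) = 2\operatorname{Im}\bigl[d\omega(X^+,Y^+,Z^-) - \omega(\alpha(Y^+,Z^-),X^+) - \omega(\alpha(Z^-,X^+),Y^+)\bigr].
\end{align*}

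Next I would expand each term in real vector fields via $X^\pm = \tfrac{1}{2}(X\mp iJX)$ and match with the real-variable formula. For the $d\omega$ term, direct expansion gives
\begin{align*}
2\operatorname{Im} d\omega(X^+,Y^+,Z^-) = \tfrac{1}{4}\bigl[-d\omega(JX,JY,JZ) + d\omega(X,Y,JZ) - d\omega(X,JY,Z) - d\omega(JX,Y,Z)\bigr],
\end{align*}
and applying the formula of Proposition \ref{prop:3form(2,1)+(1,2)} to both $(d\omega)^+(JX,JY,JZ)$ and $(d\omega)^+(X,Y,JZ)$ produces exactly the same linear combination with the coefficients $-\tfrac{1}{2}$ and $+\tfrac{1}{2}$. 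For the $\alpha$ terms, I would use the $(1,1)$-identities $\alpha(JY,JZ)=\alpha(Y,Z)$ and $\alpha(JY,Z)=-\alpha(Y,JZ)$ to reduce $\alpha(Y^+,Z^-) = \tfrac{1}{2}[\alpha(Y,Z) + i\alpha(Y,JZ)]$, and analogously for $\alpha(Z^-,X^+)$; then expand $X^+,Y^+$ and take the imaginary parts, repeatedly converting between $g$ and $\omega$ via $g(\cdot,J\cdot) = -\omega(\cdot,\cdot)$. On the other side, using $\alpha(JX,JY) = \alpha(X,Y)$ I would expand $\tfrac{1}{2}\alpha^+(X,Y,Z) - \tfrac{1}{2}\alpha^+(JX,JY,Z)$ into four $g$-pairings and check that the two expressions coincide.

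The main obstacle is the algebraic bookkeeping in this last step, especially for the $\alpha$-terms: one has to commute $J$ through several arguments, use the $(1,1)$-symmetries of $\alpha$, and repeatedly convert between $g$- and $\omega$-pairings before the cyclic sum defining $\alpha^+$ becomes visible. The $d\omega$ part is essentially forced by Proposition \ref{prop:3form(2,1)+(1,2)} and is mostly a matter of carefully tracking signs coming from $J^2=-1$. Once both pieces match, the proposition follows immediately from the corresponding iff of Corollary \ref{cor:thetaFormula}.
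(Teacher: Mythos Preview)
Your proposal is correct and follows essentially the same route as the paper: both start from Corollary \ref{cor:thetaFormula}, reduce the proposition to a closed formula for $g(\beta(X,Y),Z)$, substitute condition (2) of that corollary, expand $X^\pm=\tfrac12(X\mp iJX)$, and then invoke Proposition \ref{prop:3form(2,1)+(1,2)} to recognise the $(d\omega)^+$ and $\alpha^+$ combinations. The only cosmetic difference is that the paper computes $\omega(\beta(X,Y),Z)$ first and converts to $g$ at the end by replacing $Z$ with $JZ$ (introducing an auxiliary cyclic form $\eta^+$ built from $\omega$), whereas you go straight to $g(\beta(X,Y),Z)=2\operatorname{Im}\,\omega(\beta(X^+,Y^+),Z^-)$ and expand from there; the underlying algebra is the same.
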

\begin{proof}
    Let $T=-\frac{1}{4}N+\alpha+\beta$ as in Corollary \ref{cor:thetaFormula}.  Expanding $\omega(\beta(X,Y),Z)$ gives
    \begin{align*}
        \omega(\beta(X,Y),Z)&=\omega(\beta(X^+,Y^+),Z^-)+\omega(\beta(X^-,Y^-),Z^+)\\
        &=d\omega(X^+,Y^+,Z^-)-\omega(\alpha(Y^+,Z^-),X^+)-\omega(\alpha(Z^-,X^+),Y^+)\\
        &+d\omega(X^-,Y^-,Z^+)-\omega(\alpha(Y^-,Z^+),X^-)-\omega(\alpha(Z^+,X^-),Y^-)  
    \end{align*}
    Let
    $$
        \eta^+(X,Y,Z)=\eta(X,Y,Z):=\omega(\alpha(X,Y),Z)+\omega(\alpha(Y,Z),X)+\omega(\alpha(Z,X),Y)
    $$
     Since $\omega(\alpha(X^+,Y^+),Z)=\omega(\alpha(X^-,Y^-),Z)=0$, the above can be rewritten as
    \begin{align*}
        \omega(\beta(X,Y),Z)&=d\omega(X^+,Y^+,Z^-)+d\omega(X^-,Y^-,Z^+)-\eta^+(X^+,Y^+,Z^-)-\eta^+(X^-,Y^-,Z^+)  
    \end{align*}
    Using the fact that $X^{\pm}=\frac{1}{2}(X\mp iJX)$ gives 
    \begin{align*}
        \omega(\beta(X,Y),Z)&=\frac{1}{4}\left[d\omega(X,Y,Z)-d\omega(JX,JY,Z)+d\omega(JX,Y,JZ)+d\omega(X,JY,JZ)\right]\\
        &-\frac{1}{4}\left[\eta^+(X,Y,Z)-\eta^+(JX,JY,Z)+\eta^+(JX,Y,JZ)+\eta^+(X,JY,JZ)\right]
    \end{align*}
    Rewriting the left side as $g(\beta(X,Y),Z)$ gives
     \begin{align*}
        g(\beta(X,Y),Z)&=\frac{1}{4}\left[d\omega(X,Y,JZ)-d\omega(JX,JY,JZ)-d\omega(JX,Y,Z)-d\omega(X,JY,Z)\right]\\
        &-\frac{1}{4}\left[\eta^+(X,Y,JZ)-\eta^+(JX,JY,JZ)-\eta^+(JX,Y,Z)-\eta^+(X,JY,Z)\right]
    \end{align*}
    Using Proposition \ref{prop:3form(2,1)+(1,2)} and the fact that $\eta=\eta^+$, the above can be rewritten as 
    \begin{align*}
        g(\beta(X,Y),Z)&=-\frac{1}{2}(d\omega)^+(JX,JY,JZ)+\frac{1}{2}(d\omega)^+(X,Y,JZ)+\frac{1}{2}\eta^+(JX,JY,JZ)-\frac{1}{2}\eta^+(X,Y,JZ)
    \end{align*}
    Defining
    $$
        \alpha^+(X,Y,Z):=g(\alpha(X,Y),Z)+\mbox{cyclic}
    $$
    gives
    \begin{align*}
        g(\beta(X,Y),Z)&=-\frac{1}{2}(d\omega)^+(JX,JY,JZ)+\frac{1}{2}(d\omega)^+(X,Y,JZ)+\frac{1}{2}\alpha^+(X,Y,Z)-\frac{1}{2}\alpha^+(JX,JY,Z)
    \end{align*}
    Substituting the above expression into 
        $$
            g(T(X,Y),Z)=-\frac{1}{4}g(N(X,Y),Z)+g(\beta(X,Y),Z)+g(\alpha(X,Y),Z)
        $$
    gives the desired form.
\end{proof}

\begin{remark}
    The general torsion formula for Hermitian connections given in \cite{Gauduchon1997} consists of two free quantities: a real $TM$-valued (1,1)-form $B_s\in \Omega^{(1,1)}_s(M;TM)$ and a real valued $3$-form $\psi^+$ of type (2,1)+(1,2) where we recall that $B_s$ satisfies the cyclic condition
    $$
        g(B_s(X,Y),Z)+\mbox{cyclic}=0
    $$
    From Corollary \ref{cor:(1,1)-aIsomorphism} the space of real valued $3$-forms of type $(2,1)+(1,2)$ is isomorphic to the space $\Omega^{(1,1)}_a(M;TM)$.  Since $\Omega^{(1,1)}(M;TM)=\Omega^{(1,1)}_s(M;TM)\oplus \Omega^{(1,1)}_a(M;TM)$, we see that the two free quantities in the formula of \cite{Gauduchon1997} is equivalent to a $TM$-valued (1,1)-form as given by Proposition \ref{prop:HermitianTorsionFormulaReal}.
     Given $B_s\in \Omega^{(1,1)}_s(M;TM)$ and $\psi^+$, we can recover Gauduchon's exact formula by defining $\alpha\in \Omega^{(1,1)}(M;TM)$ in Proposition \ref{prop:HermitianTorsionFormulaReal} by
      \begin{align*}
          g(\alpha(X,Y),Z)&=\frac{1}{8}\left[(d\omega)^+(JX,JY,JZ)+(d\omega)^+(X,Y,JZ)\right]\\
          &+\frac{3}{8}\left[\psi^+(X,Y,Z)+\psi^+(JX,JY,Z)\right]+g(B_s(X,Y),Z)
      \end{align*}
\end{remark}
\begin{corollary}
    \label{cor:HermitianOnetoOne}
    There is a one-to-one correspondence between the space of Hermitian connections on $(M,g,J,\omega)$ and $\Omega^{(1,1)}(M;TM)$.
\end{corollary}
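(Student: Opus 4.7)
The plan is to package the previous results, particularly Corollary \ref{cor:thetaFormula} and Proposition \ref{prop:TorsionUniqueness}, into an explicit bijection. Define the forward map $\Phi: \Omega^{(1,1)}(M;TM) \to \{\text{Hermitian connections}\}$ as follows: given $\alpha \in \Omega^{(1,1)}(M;TM)$, let $\beta \in \Omega^2(M;TM)$ be the unique real $(2,0)+(0,2)$-form satisfying conditions (1) and (2) of Corollary \ref{cor:thetaFormula}; set $T := -\tfrac{1}{4}N + \alpha + \beta$; and let $\Phi(\alpha)$ be the unique $g$-compatible connection with torsion $T$, whose existence is guaranteed by Proposition \ref{prop:TorsionUniqueness}. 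By Corollary \ref{cor:thetaFormula}, $\Phi(\alpha)$ is Hermitian.

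Next I would define the inverse map $\Psi$. Given a Hermitian connection $\nabla$ with torsion $T$, Corollary \ref{cor:thetaFormula} asserts that $T + \tfrac{1}{4}N$ decomposes (uniquely, by type) as $\alpha + \beta$ where $\alpha$ is (1,1) and $\beta$ is $(2,0)+(0,2)$. Set $\Psi(\nabla) := \alpha$, the $(1,1)$-component of $T + \tfrac{1}{4}N$.

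It remains to verify that $\Phi$ and $\Psi$ are mutually inverse. For $\Psi \circ \Phi = \mathrm{id}$: if we start from $\alpha$, construct $\beta$ and $T$ as above, then pass to a $g$-compatible $\nabla$ of torsion $T$, then the (1,1)-part of $T + \tfrac{1}{4}N = \alpha + \beta$ is exactly $\alpha$, since $\beta$ has pure type $(2,0)+(0,2)$. For $\Phi \circ \Psi = \mathrm{id}$: starting from a Hermitian $\nabla$ with torsion $T$, Corollary \ref{cor:thetaFormula} gives $T = -\tfrac{1}{4}N + \alpha + \beta'$ with $\beta'$ the unique $(2,0)+(0,2)$-form satisfying the two conditions of that corollary; this is precisely the $\beta$ produced by $\Phi$ applied to $\alpha = \Psi(\nabla)$. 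Thus $\Phi(\Psi(\nabla))$ and $\nabla$ are both $g$-compatible with the same torsion, so Proposition \ref{prop:TorsionUniqueness} forces $\Phi(\Psi(\nabla)) = \nabla$.

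There is essentially no serious obstacle here, since the surjectivity and uniqueness of the decomposition of a Hermitian torsion were already handled in Corollary \ref{cor:thetaFormula}. The only subtle point worth emphasizing in the write-up is \emph{why} $\beta$ is uniquely determined by $\alpha$: this follows from condition (2) of Corollary \ref{cor:thetaFormula}, which fixes $\omega(\beta(X^+,Y^+),Z^-)$ (and hence, by non-degeneracy of $\omega$ together with condition (1) that $\beta(X^+,Y^+)$ be of type $(1,0)$, the value $\beta(X^+,Y^+)$ itself) in terms of $d\omega$ and $\alpha$; reality then recovers $\beta(X^-,Y^-)$, while $\beta(X^+,Y^-)$ vanishes because $\beta$ is of type $(2,0)+(0,2)$.
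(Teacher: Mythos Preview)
Your proposal is correct and follows essentially the same approach as the paper: both arguments recover $\alpha$ as the $(1,1)$-component of $T+\tfrac{1}{4}N$, using that $\beta$ has pure type $(2,0)+(0,2)$. The paper phrases this slightly differently, working with the real formula of Proposition~\ref{prop:HermitianTorsionFormulaReal} and computing $g(T^\alpha(X^+,Y^-),Z)=g(\alpha(X^+,Y^-),Z)$ directly to establish injectivity, whereas you invoke the type decomposition from Corollary~\ref{cor:thetaFormula} and package the argument as an explicit pair of inverse maps; the underlying idea is identical.
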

\begin{proof}
    For $\alpha\in \Omega^{(1,1)}(M;TM)$, let $T^\alpha\in \Omega^2(M;TM)$ be given by the formula in Proposition \ref{prop:HermitianTorsionFormulaReal}.  We show that $T^\alpha=T^{\alpha'}$ implies $\alpha=\alpha'$.  From the formula in Proposition \ref{prop:HermitianTorsionFormulaReal}, we have
    \begin{align*}
        g(T^\alpha(X^+,Y^-),Z)&=-\frac{1}{4}g(N(X^+,Y^-),Z)-\frac{1}{2}(d\omega)^+(X^+,Y^-,JZ)+\frac{1}{2}(d\omega)^+(X^+,Y^-,JZ)\\
        &+\frac{1}{2}\alpha^+(X^+,Y^-,Z)-\frac{1}{2}\alpha^+(X^+,Y^-,Z)+g(\alpha(X^+,Y^-),Z)\\
        &=g(\alpha(X^+,Y^-),Z),
    \end{align*}
    where we have made use of the fact that $N(X^+,Y^-)=0$.  Hence, $T^\alpha=T^{\alpha'}$ implies $g(\alpha(X^+,Y^-),Z)=g(\alpha'(X^+,Y^-),Z)$ which in turn implies $\alpha=\alpha'$ since $\alpha,~\alpha'$ are (1,1) and $g$ is nondegenerate.  The corollary now follows from the fact that the space of Hermitian connections on $(M,g,J,\omega)$ is in one-to-one correspondence with the space $\{T^\alpha~|~\alpha\in \Omega^{(1,1)}(M;TM)\}$ which is in one-to-one correspondence with the elements of $\Omega^{(1,1)}(M;TM)$ by the above calculation.
\end{proof}
\subsection{Gauduchon connections}
\label{section:GauduchonConnection}
The Gauduchon connections is an affine line of Hermitian connections which includes both the Chern \cite{Chern1979} and Bismut (or Strominger) connections \cite{Bismut1989,Strominger1986} when $J$ is integrable.

We obtain the torsion formula for the Gauduchon connections from the formula of Proposition \ref{prop:HermitianTorsionFormulaReal} as follows.  Let 
$$
\eta^+(X,Y,Z):=(d\omega)^+(JX,JY,JZ).
$$
For $\lambda\in \mathbb{R}$, define $\alpha^\lambda\in \Omega^{(1,1)}(M;TM)$ by
$$
g(\alpha^\lambda(X,Y),Z)=\lambda\eta^+(X,Y,Z)+\lambda\eta^+(JX,JY,Z)=\lambda(d\omega)^+(JX,JY,JZ)+\lambda(d\omega)^+(X,Y,JZ)
$$
Let $F:\Omega^{(1,1)}(M;TM)\rightarrow \Omega^{3+}(M)$ be the linear map of Proposition \ref{prop:(1,1)to(2,1)+(2,1)}.  From the proof of Proposition \ref{prop:(1,1)to(2,1)+(2,1)}, we have
$$
F(\alpha^\lambda)(X,Y,Z):=g(\alpha^\lambda(X,Y),Z)+\mbox{cyclic}=4\lambda\eta^+(X,Y,Z)=4\lambda(d\omega)^+(JX,JY,JZ)
$$
Substituting $\alpha^\lambda$ into Proposition \ref{prop:HermitianTorsionFormulaReal} gives $\alpha^{\lambda+}:=F(\alpha^\lambda)$ and
\begin{align*}
        g(T^\lambda(X,Y),Z)&=-\frac{1}{4}g(N(X,Y),Z)-\frac{1}{2}(d\omega)^+(JX,JY,JZ)+\frac{1}{2}(d\omega)^+(X,Y,JZ)\\
        &+2\lambda(d\omega)^+(JX,JY,JZ)-2\lambda(d\omega)^+(X,Y,JZ)\\
        &+\lambda(d\omega)^+(JX,JY,JZ)+\lambda(d\omega)^+(X,Y,JZ)\\
        &=-\frac{1}{4}g(N(X,Y),Z)-\left(\frac{2\lambda-1}{2}\right)(d\omega)^+(X,Y,JZ)+\left(\frac{6\lambda-1}{2}\right)(d\omega)^+(JX,JY,JZ)
\end{align*}
where the last equality is the family of Gauduchon connections parameterized by $\lambda\in \mathbb{R}$.  For convenience, we can express the Gauduchon connections in a more standard form by taking $\lambda \rightarrow t/4$ and defining $\theta^c,\theta^b\in \Omega^2(M;TM)$ by
\begin{align}
    \label{eq:ChernTheta}
    g(\theta^c(X,Y),Z)&=\frac{1}{2}\left[(d\omega)^+(X,Y,JZ)-(d\omega)^+(JX,JY,JZ)\right]\\
    \label{eq:BismutTheta}
    g(\theta^b(X,Y),Z)&=(d\omega)^+(JX,JY,JZ).
\end{align}
Then 
\begin{equation}
    \label{eq:GauduchonTorsion}
    g(T^t(X,Y),Z)=-\frac{1}{4}g(N(X,Y),Z)+\left(1-\frac{t}{2}\right)g(\theta^c(X,Y),Z)+\frac{t}{2}g(\theta^b(X,Y),Z)
\end{equation}
\begin{remark}
    When $J$ is integrable, (\ref{eq:GauduchonTorsion}) reduces to
    \begin{equation}
    \label{eq:GauduchonTorsionInt}
    g(T^t(X,Y),Z)=\left(\frac{2-t}{4}\right)d\omega(X,Y,JZ)+\left(\frac{3t-2}{4}\right)d\omega(JX,JY,JZ)
\end{equation}
When $t=0$, we have
$$
g(T^{0}(X,Y),Z)=\frac{1}{2}\left[d\omega(X,Y,JZ)-d\omega(JX,JY,JZ)\right]
$$
which is the torsion formula for the Chern connection \cite{Chern1979}.  When $t=2$, we obtain the torsion formula for the Bismut or Strominger connection \cite{Bismut1989,Strominger1986}:
$$
g(T^2(X,Y),Z)=d\omega(JX,JY,JZ).
$$
The Chern connection is the unique Hermitian connection on a Hermitian manifold with the condition that the torsion has no (1,1)-part.  For the general case of almost Hermitian manifolds, the condition $T^{(1,1)}=0$ corresponds uniquely to the condition $\alpha=0$ in Proposition \ref{prop:HermitianTorsionFormulaReal}.  The Bismut connection is the unique Hermitian connection on a Hermitian manifold whose torsion tensor $g(T(X,Y),Z)$ is totally skew-symmetric.  When $J$ is not integrable, a Hermitian connection with totally skew-symmetric torsion is not always possible.  See Appendix \ref{appendix:Bismut} for details.
\end{remark}

\section{Left Invariant Hermitian Connections}
\label{sec:AlmostHermitianLieGroups}
\noindent In this section, we apply the results of Section \ref{sec:Preliminaries} to almost Hermitian manifolds of the form $(G,g,J,\omega)$, where $G$ is a Lie group (with identity element denoted as $1$) and $g$, $J$, (and hence) $\omega$ are left-invariant.  Hence, if $X,Y$ are left-invariant vector fields, $g(X,Y)$ is a constant and $JX$ is left-invariant.  We let $\mathfrak{g}:=\mbox{Lie(G)}$ denote the Lie algebra of left-invariant vector fields on $G$.  

We recall that a connection $\nabla$ on $G$ is left-invariant if $\nabla_XY\in \mathfrak{g}$ for all $X,Y\in \mathfrak{g}$.  An immediate consequence of this definition is the following:
\begin{corollary}
    \label{cor:TorsionLeftInvariant}
    Let $\nabla$ be a left-invariant connection on $G$ with torsion $T$.  Then $T\in \wedge^2\mathfrak{g}^\ast\otimes \mathfrak{g}$ where $\mathfrak{g}^\ast$ denotes the dual of $\mathfrak{g}$. (Hence, $\mathfrak{g}^\ast$ is the space of left-invariant 1-forms on $G$.)
\end{corollary}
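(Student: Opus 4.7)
The plan is to combine three facts: (i) torsion is a tensor, so it is $C^\infty(G)$-bilinear and determined by its values on any global frame; (ii) $G$ admits a global frame consisting of left-invariant vector fields; and (iii) on such a frame, the torsion of a left-invariant connection takes values in $\mathfrak{g}$.

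First I would recall the defining formula
\[
T(X,Y) = \nabla_X Y - \nabla_Y X - [X,Y],
\]
and evaluate it on a pair $X,Y\in\mathfrak{g}$. By hypothesis, $\nabla_X Y$ and $\nabla_Y X$ lie in $\mathfrak{g}$, and $[X,Y]\in \mathfrak{g}$ because $\mathfrak{g}$ is closed under the Lie bracket. Hence $T(X,Y)\in \mathfrak{g}$, and the restriction of $T$ to $\mathfrak{g}\times\mathfrak{g}$ is $\mathbb{R}$-bilinear and skew-symmetric, giving an element $\widetilde{T}\in \wedge^2\mathfrak{g}^\ast\otimes \mathfrak{g}$.

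Next I would promote this pointwise/algebraic datum to the tensor $T$ on all of $G$. Choose a basis $e_1,\dots,e_m$ of $\mathfrak{g}$; as left-invariant vector fields they form a global frame on $G$. Any vector fields $X,Y$ on $G$ can be written $X=\sum X^i e_i$, $Y=\sum Y^j e_j$ with $X^i,Y^j\in C^\infty(G)$, and by the tensoriality of $T$ (which follows from the Leibniz rule for $\nabla$ and the standard identities for $[\cdot,\cdot]$ against smooth functions),
\[
T(X,Y) = \sum_{i,j} X^i Y^j\, T(e_i,e_j) = \sum_{i,j} X^i Y^j\, \widetilde{T}(e_i,e_j).
\]
Since each $\widetilde{T}(e_i,e_j)\in\mathfrak{g}$, this exhibits $T$ as the image of $\widetilde{T}$ under the natural inclusion $\wedge^2\mathfrak{g}^\ast\otimes\mathfrak{g}\hookrightarrow \Omega^2(G;TG)$, proving the claim.

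There is really no obstacle here: the only thing to be careful about is not to conflate left-invariance of $\nabla$ with left-invariance of individual covariant derivatives along arbitrary vector fields; the hypothesis is precisely the statement $\nabla_X Y\in \mathfrak{g}$ for $X,Y\in \mathfrak{g}$, which is exactly what the first step uses. All remaining work is bookkeeping via the left-invariant frame.
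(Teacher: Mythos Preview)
Your argument is correct and follows exactly the paper's approach: evaluate $T(X,Y)=\nabla_XY-\nabla_YX-[X,Y]$ on $X,Y\in\mathfrak{g}$ and observe each term lies in $\mathfrak{g}$. The paper simply records this one line, leaving the tensoriality and frame bookkeeping implicit, whereas you spell it out.
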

\begin{proof}
    From the definition of $T$, we have 
    $$
        T(X,Y):=\nabla_XY-\nabla_YX-[X,Y]\in \mathfrak{g},\hspace*{0.2in}\forall~X,Y\in \mathfrak{g}.
    $$
\end{proof}
\begin{remark}
    Note that the converse to Corollary \ref{cor:TorsionLeftInvariant} is not true.  Indeed, let $h$ be any Riemannian metric on $G$ which is not left-invariant and let $Z\in \mathfrak{g}$ be nonzero.  Let $F:=h\otimes Z$ and let $\nabla$ be any left-invariant connection on $G$ with torsion $T$. By Corollary \ref{cor:TorsionLeftInvariant}, we have $T\in \wedge^2\mathfrak{g}^\ast \otimes \mathfrak{g}$.  From the definition of $F$, we have $F(X,Y)=F(Y,X)$ for all vector fields $X,Y$ on $G$.  Let $\nabla':= \nabla+F$ and let $T'$ denote the torsion of $\nabla'$.  Then $\nabla'$ is not left-invariant since for some $X,Y\in \mathfrak{g}$, one has $F(X,Y)=h(X,Y)Z\notin \mathfrak{g}$.  At the same time, the symmery of $F$ implies $T'=T\in \wedge^2\mathfrak{g}^\ast \otimes \mathfrak{g}$. 
\end{remark}
\begin{proposition}
    \label{prop:LeftInvariantConnection}
    Let $\nabla$ be a $g$-compatible connection on $(G,g,J,\omega)$ with torsion $T$.  Then $\nabla$ is left-invariant if and only if $T\in \wedge^2\mathfrak{g}^\ast \otimes \mathfrak{g}$.
\end{proposition}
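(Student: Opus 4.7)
The forward direction is immediate from Corollary \ref{cor:TorsionLeftInvariant}, so the substantive content is the converse: if $\nabla g=0$ and $T\in\wedge^2\mathfrak{g}^\ast\otimes\mathfrak{g}$, then $\nabla_XY\in\mathfrak{g}$ for all $X,Y\in\mathfrak{g}$. The plan is to invoke the Koszul-type formula from Proposition \ref{prop:TorsionUniqueness}, which expresses $2g(\nabla_XY,Z)$ as a sum of derivative, bracket, and torsion terms, and to verify that every term on the right-hand side is constant when $X,Y,Z$ are left-invariant.

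Concretely, I would take $X,Y,Z\in\mathfrak{g}$ and examine the three groups of terms in (\ref{eq:UniqueConnectionTorsionEq}). First, since $g$ is left-invariant, the pairings $g(Y,Z)$, $g(X,Y)$, $g(Z,X)$ are all constant functions on $G$, so the derivative terms $Xg(Y,Z)$, $Yg(Z,X)$, $Zg(X,Y)$ vanish. Second, because $\mathfrak{g}$ is closed under the Lie bracket, the bracket terms $g([X,Y],Z)$, $g([Y,Z],X)$, $g([X,Z],Y)$ are constants. Third, by the hypothesis $T\in\wedge^2\mathfrak{g}^\ast\otimes\mathfrak{g}$, the values $T(X,Y)$, $T(Y,Z)$, $T(X,Z)$ lie in $\mathfrak{g}$, so the torsion pairings are again constants. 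Hence $g(\nabla_XY,Z)$ is a constant on $G$ for every $Z\in\mathfrak{g}$.

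To finish, I would promote ``$g(\nabla_XY,Z)$ constant for all $Z\in\mathfrak{g}$'' to ``$\nabla_XY\in\mathfrak{g}$''. Pick a basis $\{E_1,\dots,E_m\}$ of $\mathfrak{g}$, write $\nabla_XY=\sum_i f^iE_i$ for smooth functions $f^i$ on $G$, and note that
\[
g(\nabla_XY,E_j)=\sum_i f^i\,g(E_i,E_j).
\]
The matrix $(g(E_i,E_j))$ has constant entries (left-invariance of $g$) and is invertible (nondegeneracy of $g$), so inverting it expresses each $f^i$ as a fixed linear combination of the constants $g(\nabla_XY,E_j)$; hence each $f^i$ is constant, proving $\nabla_XY\in\mathfrak{g}$.

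I do not anticipate a serious obstacle: the entire argument rests on the Koszul formula together with the elementary bookkeeping that everything in sight is constant. If there is any subtlety, it is purely notational, namely making sure the identification ``left-invariant vector field $\Leftrightarrow$ constant $g$-pairings with a basis of $\mathfrak{g}$'' is stated carefully enough to avoid a circular use of nondegeneracy.
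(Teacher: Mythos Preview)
Your argument is correct and is exactly the approach the paper takes: the paper's proof is the one-line ``This follows from Corollary \ref{cor:TorsionLeftInvariant} and Proposition \ref{prop:TorsionUniqueness},'' and what you have written simply unpacks that citation by feeding left-invariant $X,Y,Z$ into the Koszul formula (\ref{eq:UniqueConnectionTorsionEq}) and reading off that $g(\nabla_XY,Z)$ is constant.
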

\begin{proof}
    This follows from Corollary \ref{cor:TorsionLeftInvariant} and Proposition \ref{prop:TorsionUniqueness}.
\end{proof}

\noindent For convenience, we make the following definition:
\begin{definition}
    \label{def:standardbasis}
    A standard frame on $(G,g,J,\omega)$ is an orthonormal frame of left invariant vector fields
    $$
        (e_{(1)}~|~e_{(2)})=(e_1,\dots,e_n~|~e_{n+1},\dots, e_{2n}) 
    $$
    such that $Je_i=e_{n+i}$ for $i=1,\dots, n$.  
\end{definition}
\begin{remark}
The existence of a standard frame follows from an induction argument on $\dim G$.
\end{remark}
\noindent We recall that the Lie group of $n\times n$ complex matrices $GL(n,\mathbb{C})$ can be regarded as a subgroup of $GL(2n,\mathbb{R})$ via the identification
$$
A+iB\sim \left(\begin{array}{cc}
A & -B\\
B & A
\end{array}\right)
$$
\begin{proposition}
    \label{prop:standardbasis}
    Let $(e_{(1)}~|~e_{(2)})$ be any standard frame on $(G,g,J,\omega)$ and let $2n=\dim G$.  Then the set of all standard frames is given by 
    $$
        \{(e_{(1)}~|~e_{(2)})K~|~K\in U(n):=GL(n,\mathbb{C})\cap O(2n,\mathbb{R})\}.
    $$
\end{proposition}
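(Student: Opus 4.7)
The plan is to establish the equality by proving containment in both directions, with the bulk of the work being an unpacking of the identification $U(n) = GL(n,\mathbb{C}) \cap O(2n,\mathbb{R})$ at the level of block matrices. Given a second standard frame $(f_{(1)} \mid f_{(2)})$, I can write $(f_{(1)} \mid f_{(2)}) = (e_{(1)} \mid e_{(2)}) K$ for a unique $K \in GL(2n,\mathbb{R})$, since both are bases of $\mathfrak{g}$. The goal then reduces to showing $K \in U(n)$, and conversely that every $K \in U(n)$ produces a standard frame.

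For the forward direction, I will first observe that since both frames are orthonormal with respect to $g$, the standard computation $\delta_{ik} = g(f_i, f_k) = \sum_{j,l} K_{ji} K_{lk} g(e_j, e_l) = (K^T K)_{ik}$ forces $K \in O(2n,\mathbb{R})$. Next, since $Je_i = e_{n+i}$ for $i = 1,\dots,n$ and $J^2 = -\mbox{Id}$ gives $J e_{n+i} = -e_i$, the matrix of $J$ with respect to $(e_{(1)} \mid e_{(2)})$ is
\[
J_0 := \begin{pmatrix} 0 & -I_n \\ I_n & 0 \end{pmatrix}.
\]
Because the new frame is also standard, the matrix of $J$ with respect to it is again $J_0$; the change-of-basis formula therefore yields $K^{-1} J_0 K = J_0$, equivalently $J_0 K = K J_0$.

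The next step is the purely algebraic identification. Writing $K$ in block form $K = \left(\begin{smallmatrix} A & C \\ B & D \end{smallmatrix}\right)$ and imposing $J_0 K = K J_0$ gives $C = -B$ and $D = A$, so $K = \left(\begin{smallmatrix} A & -B \\ B & A \end{smallmatrix}\right)$; this is exactly the image of $A + iB \in GL(n,\mathbb{C})$ under the inclusion $GL(n,\mathbb{C}) \hookrightarrow GL(2n,\mathbb{R})$ recalled just before the proposition. Combined with $K \in O(2n,\mathbb{R})$, this gives $K \in GL(n,\mathbb{C}) \cap O(2n,\mathbb{R}) = U(n)$.

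For the converse, I will take $K \in U(n)$ and verify the three defining properties of a standard frame for $(e_{(1)} \mid e_{(2)}) K$: left-invariance is immediate since $K$ has constant scalar entries and constant-coefficient linear combinations of left-invariant vector fields remain left-invariant; orthonormality follows from $K \in O(2n,\mathbb{R})$ by reversing the computation above; and the condition $Jf_i = f_{n+i}$ is equivalent to the matrix of $J$ in the new frame being $J_0$, which holds precisely because $K$ commutes with $J_0$. No genuine obstacle arises in this argument; the only care required is in the bookkeeping of the block-matrix conventions and the identification of the commutant of $J_0$ in $M_{2n}(\mathbb{R})$ with $M_n(\mathbb{C})$.
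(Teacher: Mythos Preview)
Your proposal is correct and follows essentially the same approach as the paper: both arguments write $K$ in $n\times n$ block form, use orthonormality to obtain $K\in O(2n,\mathbb{R})$, and use the fact that the matrix of $J$ is $J_0$ in both frames to get $K^{-1}J_0K=J_0$, hence $K\in GL(n,\mathbb{C})$. Your treatment is slightly more complete in that you explicitly verify the converse direction and the left-invariance of the new frame, points the paper leaves implicit.
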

\begin{proof}
    Express $K\in GL(2n,\mathbb{R})$ as
    $$
        K=\left(\begin{array}{cc}
            A & C\\
            B & D
        \end{array}\right)
    $$
    Then $(e_{(1)}'~|~e_{(2)}'):=(e_{(1)}~|~e_{(2)})K$ expands as 
    $$
        e_j'=\sum_i(a_{ij}e_i+b_{ij}f_i),\hspace*{0.2in} e'_{n+j}=\sum_i(c_{ij}e_i+d_{ij}f_i),\hspace*{0.1in} j=1,\dots, n
    $$
    $(e_{(1)}'~|~e_{(2)}')$ is then an orthonormal basis with respect to $g$ if and only if
    \begin{equation}
        \label{eq:standardbasis1}
        A^TA+B^TB=C^TC+D^TD=1_n,\hspace*{0.2in}A^TC+B^TD=0
    \end{equation}
    where $1_n$ denotes the $n\times n$ identity matrix.  Moreover, the condition $Je'_i=e'_{n+i}$ is simply the statement that the matrix representation of $J$ with respect to $(e_{(1)}~|~e_{(2)})$ and $(e_{(1)}'~|~e_{(2)}')$ are both 
    $$
        \left(\begin{array}{cc}
            0 & -1_n\\
            1_n & 0
        \end{array}\right)
    $$
    Since $K$ is the transition matrix between $(e_{(1)}'~|~e_{(2)}')$ and $(e_{(1)}~|~e_{(2)})$, it follows that 
    \begin{equation}
        \label{eq:standardbasis2}
        \left(\begin{array}{cc}
            A & C\\
            B & D
        \end{array}\right)^{-1}\left(\begin{array}{cc}
            0 & -1_n\\
            1_n & 0
        \end{array}\right)\left(\begin{array}{cc}
            A & C\\
            B & D
        \end{array}\right) = \left(\begin{array}{cc}
            0 & -1_n\\
            1_n & 0
        \end{array}\right)
    \end{equation}
    which implies $A=D$ and $C=-B$. Hence, $K\in GL(n,\mathbb{C})$.  This along with  (\ref{eq:standardbasis1}) implies that $K^TK=1_{2n}$.  This completes the proof.
\end{proof}
\begin{remark}
    A choice of standard frame uniquely determines the left-invariant almost Hermitian structure on $G$.  Moreover, any two standard frames which are related by a unitary matrix define the same left-invariant almost Hermitian structure on $G$.
\end{remark}
\noindent From this point forth, we fix a standard frame $(e_{(1)}~|~e_{(2)})$ on $(G,g,J,\omega)$.  We denote the structure constants of $\mathfrak{g}$ as 
\begin{align}
\label{eq:StructureConstants}
[e_i,e_j]= \sum_{k}C^k_{ij}e_k
\end{align}
For notational convenience, we set $\mg{X,Y}:=g(X,Y)$.  Since a standard frame is orthonormal, we have
\begin{align}
     \label{eq:StructureConstants1}
    C^k_{ij}=\mg{[e_i,e_j],e_k}
\end{align}
\noindent For $T\in \wedge^2\mathfrak{g}^\ast\otimes \mathfrak{g}$, we adopt the following notation for its components:
\begin{align}
    \label{notation:Torsion}
    T^k_{ij}&:=\mg{T(e_i,e_j),e_k}
\end{align}
\begin{lemma}
\label{lemma:nablaEi}
    Let $\nabla$ be a left-invariant $g$-compatible connection on $(G,g,J,\omega)$ with torsion $T$. Then
    \begin{align}
    \label{eq:nablaFormula}
        \nabla_{e_i}e_j&=\sum_k \Gamma^k_{ij}e_k
    \end{align} 
    where 
    \begin{equation}
        \label{eq:Christoffel}
        \Gamma^k_{ij}=\frac{1}{2}\left(C^k_{ij}-C^i_{jk}-C^j_{ik}+ T^k_{ij}- T^i_{jk}-T^j_{ik}\right)
    \end{equation}
\end{lemma}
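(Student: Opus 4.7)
The plan is to read off the Christoffel symbols from the formula in Proposition \ref{prop:TorsionUniqueness} after specializing all three arguments to elements of the standard frame. By Proposition \ref{prop:LeftInvariantConnection}, since $\nabla$ is left-invariant and $g$-compatible, $\nabla$ is uniquely determined by $g$ and its torsion $T\in \wedge^2\mathfrak{g}^\ast\otimes\mathfrak{g}$, so evaluating (\ref{eq:UniqueConnectionTorsionEq}) on $(e_i,e_j,e_k)$ will give the full answer.

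First I would set $X=e_i$, $Y=e_j$, $Z=e_k$ in (\ref{eq:UniqueConnectionTorsionEq}). Because $(e_{(1)}\,|\,e_{(2)})$ is a left-invariant orthonormal frame, each of the inner products $\mg{e_i,e_j}$, $\mg{e_j,e_k}$, $\mg{e_k,e_i}$ is a constant function on $G$, so all three directional-derivative terms $Xg(Y,Z)$, $Yg(Z,X)$, $Zg(X,Y)$ drop out. What remains is
\begin{align*}
2\mg{\nabla_{e_i}e_j,e_k} &= \mg{[e_i,e_j],e_k}-\mg{[e_j,e_k],e_i}-\mg{[e_i,e_k],e_j}\\
&\quad +\mg{T(e_i,e_j),e_k}-\mg{T(e_j,e_k),e_i}-\mg{T(e_i,e_k),e_j}.
\end{align*}
Substituting the notational definitions (\ref{eq:StructureConstants1}) and (\ref{notation:Torsion}) for the right-hand side converts this identically into $C^k_{ij}-C^i_{jk}-C^j_{ik}+T^k_{ij}-T^i_{jk}-T^j_{ik}$.

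To finish, I would observe that $\nabla_{e_i}e_j\in\mathfrak{g}$ by Corollary \ref{cor:TorsionLeftInvariant} (or directly by left-invariance of $\nabla$), so expanding $\nabla_{e_i}e_j = \sum_k \Gamma^k_{ij}e_k$ and pairing against $e_k$ gives $\mg{\nabla_{e_i}e_j,e_k}=\Gamma^k_{ij}$ by orthonormality. Dividing the displayed identity by $2$ then yields (\ref{eq:Christoffel}). There is no genuine obstacle here — the argument is entirely mechanical once one notices that orthonormality kills the derivative terms in the Koszul-type formula; the only thing to be careful about is matching the index positions in $C^i_{jk}$ and $T^i_{jk}$ consistently with the conventions (\ref{eq:StructureConstants1}) and (\ref{notation:Torsion}).
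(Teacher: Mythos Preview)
Your proposal is correct and follows exactly the same approach as the paper's proof: specialize the Koszul-type formula (\ref{eq:UniqueConnectionTorsionEq}) to the standard frame, use left-invariance of $\mg{\cdot,\cdot}$ to kill the derivative terms, and read off $\Gamma^k_{ij}$ via orthonormality and the definitions (\ref{eq:StructureConstants1}) and (\ref{notation:Torsion}). If anything, you have spelled out slightly more detail than the paper does.
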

\begin{proof}
Using (\ref{eq:UniqueConnectionTorsionEq}) and the left-invariance of $\mg{\cdot,\cdot}$, we have
\begin{align*}
    2\Gamma_{ij}^k&=2\mg{\nabla_{e_i}e_j,e_k}\\
    &= \mg{[e_i,e_j],e_k}-\mg{[e_j,e_k],e_i}-\mg{[e_i,e_k],e_j} + \mg{T(e_i,e_j),e_k}-\mg{T(e_j,e_k),e_i}-\mg{T(e_i,e_k),e_j}\\
    &= C^k_{ij}-C^i_{jk}-C^j_{ik}+ T^k_{ij}- T^i_{jk}-T^j_{ik}.
\end{align*}
\end{proof}

\begin{lemma}
    \label{lemma:NijenhuisLeftInvariant}
    The components $N^c_{ab}:=\mg{N(e_a,e_b),e_c}$ of the Nijenhuis tensor are given as follows: for $1\le i,j,k\le n$,
    \begin{equation}
     \label{eq:NijenhuisLeftInvariant1}
        N^k_{ij}=-C^{n+k}_{n+i,j}-C^{n+k}_{i,n+j}+C^k_{ij}-C^k_{n+i,n+j},\hspace*{0.1in} N^{n+k}_{ij}=C^k_{n+i,j}+C^k_{i,n+j}+C^{n+k}_{ij}-C^{n+k}_{n+i,n+j}
    \end{equation}
    \begin{equation}
     \label{eq:NijenhuisLeftInvariant2}
     N^{n+k}_{i,n+j}=-N^k_{ij},~N^k_{i,n+j}=N^{n+k}_{ij},~N^k_{n+i,n+j}=-N^k_{ij},~N^{n+k}_{n+i,n+j}=-N^{n+k}_{ij}
    \end{equation}
    In particular, $N=0$ if and only if $N^k_{ij}=N^{n+k}_{ij}=0$ for $1\le i,j,k\le n$.
\end{lemma}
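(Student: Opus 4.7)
The plan is to compute $N(e_i,e_j)$ directly from the definition (\ref{def:Nijenhuis}) using $Je_i = e_{n+i}$ and $Je_{n+i} = -e_i$, then read off the components by pairing with $e_k$ and $e_{n+k}$. First I would expand
\[
N(e_i,e_j) = J[e_{n+i},e_j] + J[e_i,e_{n+j}] + [e_i,e_j] - [e_{n+i},e_{n+j}],
\]
and then for each bracket rewrite it in the standard frame using (\ref{eq:StructureConstants}), splitting into the $e_m$-components ($1\le m\le n$) and $e_{n+m}$-components. Applying $J$ swaps these two halves with a sign: a term $C^m_{\bullet}e_m$ becomes $C^m_\bullet e_{n+m}$ and a term $C^{n+m}_\bullet e_{n+m}$ becomes $-C^{n+m}_\bullet e_m$. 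Taking $\langle \cdot,e_k\rangle$ kills the $e_{n+m}$ components (by orthonormality of the standard frame) and leaves exactly the four terms in the first identity of (\ref{eq:NijenhuisLeftInvariant1}); pairing with $e_{n+k}$ yields the second identity.

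For the four relations in (\ref{eq:NijenhuisLeftInvariant2}), rather than redoing the structure-constant bookkeeping, I would invoke the well-known algebraic symmetries of the Nijenhuis tensor: a direct check from (\ref{def:Nijenhuis}) using $J^2=-I$ shows
\[
N(JX,Y) = N(X,JY) = -J\,N(X,Y), \qquad N(JX,JY) = -N(X,Y).
\]
Setting $X=e_i$, $Y=e_j$ and expanding $-JN(e_i,e_j)$ in the standard frame (using that $J$ sends the $e_k$-coefficient to the $e_{n+k}$-slot and the $e_{n+k}$-coefficient to the negative of the $e_k$-slot) produces $N^k_{i,n+j}=N^{n+k}_{ij}$ and $N^{n+k}_{i,n+j}=-N^k_{ij}$; the relation $N(JX,JY)=-N(X,Y)$ immediately gives the two remaining identities. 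The "in particular" statement then follows because every component of $N$ with at least one $J$-shifted index is, by (\ref{eq:NijenhuisLeftInvariant2}), $\pm$ a component of the form $N^k_{ij}$ or $N^{n+k}_{ij}$ with $1\le i,j,k\le n$.

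There is essentially no obstacle here; the content is bookkeeping about how $J$ acts on the standard frame and how structure constants redistribute under $J$. The one place to be careful is tracking signs when applying $J$ to the $e_{n+m}$ summands (since $Je_{n+m}=-e_m$), which is exactly what produces the minus signs in front of $C^{n+k}_{n+i,j}$, $C^{n+k}_{i,n+j}$, and $C^k_{n+i,n+j}$ in (\ref{eq:NijenhuisLeftInvariant1}). I would double-check this against the general symmetries of (\ref{eq:NijenhuisLeftInvariant2}) as an internal consistency test.
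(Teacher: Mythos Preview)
Your proposal is correct and follows essentially the same route as the paper: direct expansion of $N(e_i,e_j)$ via the definition and the action of $J$ on the standard frame to obtain (\ref{eq:NijenhuisLeftInvariant1}), followed by the identities $N(X,JY)=-JN(X,Y)$ and $N(JX,JY)=-N(X,Y)$ to deduce (\ref{eq:NijenhuisLeftInvariant2}). The paper does exactly this, so there is nothing to add.
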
    
\begin{proof}
    Since $J(e_k)=e_{n+k}$ and $J(e_{n+k})=-e_k$, we have for $1\le i,j\le n$
    \begin{align*}
        N(e_i,e_j)&=J[Je_i,e_j]+J[e_i,Je_j]+[e_i,e_j]-[Je_i,Je_j]\\
            &=J[e_{n+i},e_j]+J[e_i,e_{n+j}]+\sum_{k=1}^n(C^k_{ij}e_k+C^{n+k}_{ij}e_{n+k})-[e_{n+i},e_{n+j}]\\
            &=J\sum_{k=1}^n(C^k_{n+i,j}e_k+C^{n+k}_{n+i,j}e_{n+k})+J\sum_{k=1}^n(C^k_{i,n+j}e_k+C^{n+k}_{i,n+j}e_{n+k})\\
            &+\sum_{k=1}^n(C^k_{ij}e_k+C^{n+k}_{ij}e_{n+k})-\sum_{k=1}^n(C^k_{n+i,n+j}e_k+C^{n+k}_{n+i,n+j}e_{n+k})\\
            &=\sum_{k=1}^n(C^k_{n+i,j}e_{n+k}-C^{n+k}_{n+i,j}e_{k})+\sum_{k=1}^n(C^k_{i,n+j}e_{n+k}-C^{n+k}_{i,n+j}e_{k})\\
            &+\sum_{k=1}^n(C^k_{ij}e_k+C^{n+k}_{ij}e_{n+k})-\sum_{k=1}^n(C^k_{n+i,n+j}e_k+C^{n+k}_{n+i,n+j}e_{n+k})\\
            &=\sum_{k=1}^n(-C^{n+k}_{n+i,j}-C^{n+k}_{i,n+j}+C^k_{ij}-C^k_{n+i,n+j})e_k+\sum_{k=1}^n(C^k_{n+i,j}+C^k_{i,n+j}+C^{n+k}_{ij}-C^{n+k}_{n+i,n+j})e_{n+k}
    \end{align*}
    which proves (\ref{eq:NijenhuisLeftInvariant1}).  (\ref{eq:NijenhuisLeftInvariant2}) follows from the identities
    $$
        N(e_i,e_{n+j})=N(e_i,Je_j)=-JN(e_i,e_j),\hspace*{0.2in}N(e_{n+i},e_{n+j})=N(Je_i,Je_j)=-N(e_i,e_j)
    $$ 
\end{proof}
\noindent From the definition of $\omega(\cdot,\cdot):=\mg{J\cdot,\cdot}$, we have
\begin{equation}
    \label{eq:omega}
    \omega(e_i,e_j)=\omega(e_{n+i},e_{n+j})=0,\hspace*{0.2in}\omega(e_i,e_{n+j})=\delta_{ij}
\end{equation}

\begin{lemma}
    \label{lemma:domega}
    For $1\le i,j,k\le n$:
    \begin{itemize}
        \item[(1)] $d\omega(e_i,e_j,e_k)=C_{ij}^{n+k}+C_{jk}^{n+i}+C_{ki}^{n+j}$
        \item[(2)] $d\omega(e_i,e_j,e_{n+k})=-C_{ij}^k+C_{j,n+k}^{n+i}+C_{n+k,i}^{n+j}$
        \item[(3)] $d\omega(e_i,e_{n+j},e_{n+k})=-C_{i,n+j}^k+C_{n+j, n+k}^{n+i}-C_{n+k, i}^j$
        \item[(4)] $d\omega(e_{n+i},e_{n+j},e_{n+k})=-C_{n+i, n+j}^k-C_{n+j, n+k}^i-C_{n+k, n+i}^j$
    \end{itemize}
\end{lemma}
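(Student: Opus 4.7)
The plan is to apply the invariant (Koszul) formula for the exterior derivative of a 2-form. For arbitrary vector fields $X, Y, Z$,
$$
d\omega(X,Y,Z) = X\omega(Y,Z) - Y\omega(X,Z) + Z\omega(X,Y) - \omega([X,Y],Z) + \omega([X,Z],Y) - \omega([Y,Z],X).
$$
Since $\omega$ is left-invariant, the pairing $\omega(Y,Z)$ of any two left-invariant vector fields is a constant function on $G$. Hence when $X, Y, Z \in \mathfrak{g}$ the first three terms vanish, leaving
$$
d\omega(X,Y,Z) = -\omega([X,Y],Z) + \omega([X,Z],Y) - \omega([Y,Z],X).
$$

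The next step is to expand each bracket using the structure constants of $\mathfrak{g}$ from (\ref{eq:StructureConstants}) and then contract with the remaining argument using the explicit values of $\omega$ recorded in (\ref{eq:omega}). The key observation is that $\omega(e_a, e_b)$ vanishes unless $\{a,b\} = \{m, n+m\}$ for some $1 \le m \le n$, in which case $\omega(e_m, e_{n+m}) = 1 = -\omega(e_{n+m}, e_m)$. Consequently, in each sum $\omega([e_a,e_b], e_c) = \sum_\ell C^\ell_{ab}\,\omega(e_\ell, e_c)$ exactly one term survives: the one whose upper summation index is the $J$-partner of $c$. The $\pm 1$ factor in $\omega$ decides the sign.

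I would then run through the four cases (1)--(4) in sequence. For (1), all three contracting indices lie in $\{1,\dots,n\}$, so the surviving structure constants have upper indices in $\{n+1,\dots,2n\}$; after applying the antisymmetry $C^c_{ab} = -C^c_{ba}$ to reorder the lower indices into the cyclic pattern displayed in the statement, one obtains $C^{n+k}_{ij} + C^{n+i}_{jk} + C^{n+j}_{ki}$. Cases (2), (3), and (4) are carried out the same way; the only subtlety is correctly tracking the minus sign that appears whenever $\omega$ pairs an $e_{n+c}$-component with an $e_c$ rather than the reverse, together with the antisymmetry reorderings needed to match the exact form given.

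The main obstacle is purely bookkeeping — keeping straight the interaction between the sign $\omega(e_{n+m}, e_m) = -1$ and the antisymmetry of the structure constants so that the final expressions match the stated cyclic arrangements. There is no conceptual difficulty, and no result beyond the Koszul formula and the explicit table (\ref{eq:omega}) is required.
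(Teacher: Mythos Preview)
Your proposal is correct and follows essentially the same approach as the paper: reduce the Koszul formula for $d\omega$ on left-invariant vector fields to the bracket terms, expand via the structure constants, and read off the surviving terms using the values of $\omega$ in (\ref{eq:omega}). The paper carries out case (1) explicitly and declares (2)--(4) to be analogous, exactly as you outline.
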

\begin{proof}
    Since $\omega$ is left-invariant, it follows that 
    $$
        d\omega(X,Y,Z)=-\omega([X,Y],Z)-\omega([Y,Z],X)-\omega([Z,X],Y),\hspace*{0.1in}\forall~X,Y,Z\in \mathfrak{g}.
    $$
    Applying this to (1) gives
    \begin{align*}
        d\omega(e_i,e_j,e_k)&=-\omega([e_i,e_j],e_k)-\omega([e_j,e_k],e_i)-\omega([e_k,e_i],e_j)\\
        &=-\sum_l C^l_{ij}\omega(e_l,e_k)-\sum_l C^l_{jk}\omega(e_l,e_i)-\sum_lC^l_{ki}\omega(e_l,e_j)\\
        &=C^{n+k}_{ij}+C^{n+i}_{jk}+C^{n+j}_{ki}.
    \end{align*}
    (2)-(4) are computed the same way.
\end{proof}

\begin{lemma}
    \label{lemma:TJ}
    Let $T\in \wedge^2\mathfrak{g}^\ast\otimes \mathfrak{g}$.  Then 
    $$
        T_J(X,JY)=-JT_J(X,Y),\hspace*{0.2in} T_J(JX,JY)=-T_J(X,Y).
    $$
\end{lemma}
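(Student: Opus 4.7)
The plan is a direct algebraic calculation: both identities rely only on the relation $J^2=-\mathrm{id}$ and the fact that $T$ is a skew-symmetric bilinear map, so the hypothesis $T\in\wedge^2\mathfrak{g}^\ast\otimes\mathfrak{g}$ is not really used (the statement holds for any $T\in\Omega^2(M;TM)$).

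For the first identity, I would simply substitute $JY$ for $Y$ in Definition \ref{def:thetaJDef}. This produces four terms, two of which contain $J(JY)=-Y$. After using $J^2=-\mathrm{id}$, this yields
\[
T_J(X,JY)=JT(JX,JY)-JT(X,Y)+T(X,JY)+T(JX,Y).
\]
Then I would compute $-JT_J(X,Y)$ by applying $-J$ to each of the four terms in the definition of $T_J(X,Y)$, again using $J^2=-\mathrm{id}$. Term-by-term comparison shows the two expressions agree, establishing $T_J(X,JY)=-JT_J(X,Y)$.

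For the second identity, rather than re-expanding the definition I plan to bootstrap from the first identity. Since $T$ is skew-symmetric, $T_J$ is too, so the first identity applied after a swap of arguments gives the companion formula $T_J(JX,Y)=-JT_J(X,Y)$ in the first slot. Iterating,
\[
T_J(JX,JY)=-JT_J(X,JY)=-J\bigl(-JT_J(X,Y)\bigr)=J^2T_J(X,Y)=-T_J(X,Y),
\]
which is exactly (2).

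There is no substantive obstacle here: the whole lemma is a bookkeeping exercise in applying $J^2=-\mathrm{id}$. The only thing worth being careful about is sign tracking when $J$ is moved past another $J$, and resisting the temptation to re-derive (2) from scratch when (1) already contains all the necessary information.
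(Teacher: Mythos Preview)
Your proposal is correct and matches the paper's approach exactly: the paper also verifies the first identity by direct substitution of $JY$ into the definition and simplification via $J^2=-\mathrm{id}$, and then derives the second identity by applying the first (your use of skew-symmetry to swap slots is the natural way to make that step explicit). Your observation that the hypothesis $T\in\wedge^2\mathfrak{g}^\ast\otimes\mathfrak{g}$ is inessential is also correct; the paper's proof likewise uses only $J^2=-\mathrm{id}$ and bilinearity/skew-symmetry.
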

\begin{proof}
    We verify the first identity:
    \begin{align*}
        T_J(X,JY)&:=JT(JX,JY)+JT(X,JJY)+T(X,JY)-T(JX,JJY)\\
        &=JT(JX,JY)-JT(X,Y)+T(X,JY)+T(JX,Y)\\
        &=-J(-T(JX,JY)+T(X,Y)+JT(X,JY)+JT(JX,Y))\\
        &=-JT_J(X,Y).
    \end{align*}
    The second identity follows by applying the first identity.  
\end{proof}
\begin{proposition}
    \label{prop:HermitianTorsionIFFComponents}
    Let $\nabla$ be a left-invariant $g$-compatible connection on $(G,g,J,\omega)$ with torsion $T$.  Then $\nabla$ is Hermitian if and only if the torsion components $T^c_{ab}:=\mg{T(e_a,e_b),e_c}$ satisfy the following equations for $1\le i,j,k\le n$:
    \begin{align}
        \label{eq:HermitianTorsionIFFComponents1}
        &-T^{n+k}_{n+i,j}-T^{n+k}_{i,n+j}+T^k_{ij}-T^k_{n+i,n+j}+N^k_{ij}=0\\
        \label{eq:HermitianTorsionIFFComponents2}
        &T^{k}_{n+i,j}+T^k_{i,n+j}+T^{n+k}_{ij}-T^{n+k}_{n+i,n+j}+N^{n+k}_{ij}=0
   \end{align}
   and
   \begin{align}
        \label{eq:HermitianTorsionIFFComponents3}
        C^{n+k}_{ij}+C^{n+i}_{jk}+C^{n+j}_{ki}&=-T^{n+k}_{ij}-T^{n+i}_{jk}-T^{n+j}_{ki}\\
        \label{eq:HermitianTorsionIFFComponents4}
        -C^k_{ij}+C^{n+i}_{j,n+k}+C^{n+j}_{n+k,i}&=T^k_{ij}-T^{n+i}_{j,n+k}-T^{n+j}_{n+k,i}\\
        \label{eq:HermitianTorsionIFFComponents5}
        -C^k_{i,n+j}+C^{n+i}_{n+j,n+k}-C^j_{n+k,i}&=T^k_{i,n+j}-T^{n+i}_{n+j,n+k}+T^j_{n+k,i}\\
        \label{eq:HermitianTorsionIFFComponents6}
        -C^k_{n+i,n+j}-C^i_{n+j,n+k}-C^j_{n+k,n+i}&=T^k_{n+i,n+j}+T^i_{n+j,n+k}+T^j_{n+k,n+i}
    \end{align}
\end{proposition}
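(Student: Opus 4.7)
The plan is to invoke Proposition \ref{prop:HermitianTorsion}, which characterizes Hermitian connections among $g$-compatible ones by the two torsion conditions (i) $T_J+N=0$ and (ii) $d\omega(X,Y,Z)=\omega(T(X,Y),Z)+\omega(T(Y,Z),X)+\omega(T(Z,X),Y)$. Since both conditions are $\mathbb{R}$-multilinear in their arguments, I verify them on the standard frame $(e_{(1)}~|~e_{(2)})$ and translate them into the stated component equations. Throughout I will freely use $Je_k=e_{n+k}$, $Je_{n+k}=-e_k$, and the skew-adjointness $\mg{JX,Y}=-\mg{X,JY}$ coming from $J$-invariance of $g$.

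For condition (i): I expand $T_J(e_i,e_j)=JT(e_{n+i},e_j)+JT(e_i,e_{n+j})+T(e_i,e_j)-T(e_{n+i},e_{n+j})$ and pair with $e_k$ and $e_{n+k}$ for $1\le i,j,k\le n$. Skew-adjointness of $J$ gives $\mg{JT(e_a,e_b),e_k}=-T^{n+k}_{ab}$ and $\mg{JT(e_a,e_b),e_{n+k}}=T^k_{ab}$, so the two pairings produce precisely $-T^{n+k}_{n+i,j}-T^{n+k}_{i,n+j}+T^k_{ij}-T^k_{n+i,n+j}$ and $T^k_{n+i,j}+T^k_{i,n+j}+T^{n+k}_{ij}-T^{n+k}_{n+i,n+j}$; adding $N^k_{ij}$ or $N^{n+k}_{ij}$ and setting the results to zero yields (\ref{eq:HermitianTorsionIFFComponents1}) and (\ref{eq:HermitianTorsionIFFComponents2}). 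I then argue that the mixed pairs $(e_i,e_{n+j})$ and $(e_{n+i},e_{n+j})$ produce no new equations: Lemma \ref{lemma:TJ} gives $T_J(X,JY)=-JT_J(X,Y)$ and $T_J(JX,JY)=-T_J(X,Y)$, while the explicit formulas for $N$ in Lemma \ref{lemma:NijenhuisLeftInvariant} (equation (\ref{eq:NijenhuisLeftInvariant2})) show that $N$ obeys the \emph{same} $J$-twist identities; hence $T_J+N$ vanishes on all basis pairs iff it vanishes on the pairs $(e_i,e_j)$.

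For condition (ii): I evaluate both sides on each of the four basis triples listed in Lemma \ref{lemma:domega}. The left-hand sides are supplied directly by (1)--(4) of that lemma. For the right-hand sides, the formulas $\omega(T(e_a,e_b),e_k)=-T^{n+k}_{ab}$ and $\omega(T(e_a,e_b),e_{n+k})=T^k_{ab}$ derived above turn each cyclic sum into the cyclic sum of $\pm T$-components shown on the right of (\ref{eq:HermitianTorsionIFFComponents3})--(\ref{eq:HermitianTorsionIFFComponents6}). Matching LHS to RHS on triples of types $(e_i,e_j,e_k)$, $(e_i,e_j,e_{n+k})$, $(e_i,e_{n+j},e_{n+k})$, and $(e_{n+i},e_{n+j},e_{n+k})$ produces (\ref{eq:HermitianTorsionIFFComponents3}), (\ref{eq:HermitianTorsionIFFComponents4}), (\ref{eq:HermitianTorsionIFFComponents5}), and (\ref{eq:HermitianTorsionIFFComponents6}) respectively. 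Since both sides of (ii) are totally skew-symmetric and $J$ permutes the standard frame (up to sign), every basis triple falls into one of these four classes up to reordering, so verification on these is sufficient.

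The proof is essentially mechanical; the main obstacle is consistent bookkeeping of the signs coming from $J$ and from whether components are indexed by $e_k$ or $e_{n+k}$. The useful organizational observation is that the $J$-twist symmetry of $T_J$ matches exactly that of $N$ recorded in (\ref{eq:NijenhuisLeftInvariant2}), which allows condition (i) to be reduced to the two equations (\ref{eq:HermitianTorsionIFFComponents1})--(\ref{eq:HermitianTorsionIFFComponents2}) rather than eight.
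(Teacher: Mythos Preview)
Your proposal is correct and follows essentially the same approach as the paper: both invoke Proposition \ref{prop:HermitianTorsion}, reduce condition (i) to the pairs $(e_i,e_j)$ via the $J$-twist identities of Lemma \ref{lemma:TJ} together with the matching identities for $N$ (the paper phrases this as $T_J(e_i,e_{n+j})+N(e_i,e_{n+j})=-J[T_J(e_i,e_j)+N(e_i,e_j)]$, which is your observation in slightly different packaging), and handle condition (ii) by evaluating both sides on the four triple types using Lemma \ref{lemma:domega}. One minor remark: in your final sentence, total skew-symmetry alone already reduces every basis triple to one of the four listed types up to reordering; the appeal to $J$ permuting the frame is unnecessary there.
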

\begin{proof}
    This is an application of Proposition \ref{prop:HermitianTorsion}.  Since every vector field on $G$ can be expressed as a $C^\infty(G)$-linear combination of left-invariant vector fields, it suffices to take $X$, $Y$, and $Z$ in conditions (1) and (2) of Lemma \ref{lemma:HermitianTorsion} to be left-invariant vector fields.  From this, it follows that condition (1) of Lemma \ref{lemma:HermitianTorsion} is satisfied if and only if
    \begin{align}
        \label{eq:HermitianTorsionIFFComponents7}
        &\mg{T_J(e_i,e_j)+N(e_i,e_j),e_k}=0,\hspace*{0.2in}\mg{T_J(e_i,e_j)+N(e_i,e_j),e_{n+k}}=0\\
        \label{eq:HermitianTorsionIFFComponents8}
        &\mg{T_J(e_i,e_{n+j})+N(e_i,e_{n+j}),e_k}=0,\hspace*{0.2in}\mg{T_J(e_i,e_{n+j})+N(e_i,e_{n+j}),e_{n+k}}=0\\
        \label{eq:HermitianTorsionIFFComponents9}
        &\mg{T_J(e_{n+i},e_{n+j})+N(e_{n+i},e_{n+j}),e_k}=0,\hspace*{0.2in}\mg{T_J(e_{n+i},e_{n+j})+N(e_{n+i},e_{n+j}),e_{n+k}}=0
    \end{align}
    for all $1\le i,j,k\le n$. However, Lemma \ref{lemma:TJ} implies 
    \begin{align*}
        T_J(e_i,e_{n+j})+N(e_i,e_{n+j})&=-J\left[T_J(e_i,e_j)+N(e_i,e_j)\right]\\
        T_J(e_{n+i},e_{n+j})+N(e_{n+i},e_{n+j})&=-\left[T_J(e_i,e_j)+N(e_i,e_j)\right].
    \end{align*}
    Hence, equations (\ref{eq:HermitianTorsionIFFComponents7})-(\ref{eq:HermitianTorsionIFFComponents9}) hold if and only if (\ref{eq:HermitianTorsionIFFComponents7}) holds.  We expand the second half of (\ref{eq:HermitianTorsionIFFComponents7}) which will turn out to be (\ref{eq:HermitianTorsionIFFComponents2}).  The first part is handled similarly and corresponds to (\ref{eq:HermitianTorsionIFFComponents1}).
    \begin{align*}
        \mg{T_J(e_i,e_j)+N(e_i,e_j),e_{n+k}}&=\mg{JT(Je_i,e_j),e_{n+k}}+\mg{JT(e_i,Je_j),e_{n+k}}+\mg{T(e_i,e_j),e_{n+k}}\\
        &-\mg{T(Je_i,Je_j),e_{n+k}}+N^{n+k}_{ij}\\
        &=\mg{JT(e_{n+i},e_j),e_{n+k}}+\mg{JT(e_i,e_{n+j}),e_{n+k}}+  T_{ij}^{n+k} \\
        &-\mg{T(e_{n+i},e_{n+j}),e_{n+k}}+N^{n+k}_{ij}\\
        &=\sum_{l=1}^{2n}T^l_{n+i,j}\mg{Je_l,e_{n+k}}+\sum_{l=1}^{2n}T_{i,n+j}^l\mg{Je_l,e_{n+k}}+  T_{ij}^{n+k} -T_{n+i,n+j}^{n+k}+N^{n+k}_{ij} \\
        &=T^k_{n+i,j}+T_{i,n+j}^k+  T_{ij}^{n+k} -T_{n+i,n+j}^{n+k}+N^{n+k}_{ij} 
    \end{align*}
    Condition (2) of Lemma \ref{lemma:HermitianTorsion} holds if and only if 
    \begin{align}
        \label{eq:HermitianTorsionIFFComponents10}
        d\omega(e_i,e_j,e_k)&=\omega(T(e_i,e_j),e_k)+\omega(T(e_j,e_k),e_i)+\omega(T(e_k,e_i),e_j)\\
        \label{eq:HermitianTorsionIFFComponents11}
        d\omega(e_i,e_j,e_{n+k})&=\omega(T(e_i,e_j),e_{n+k})+\omega(T(e_j,e_{n+k}),e_i)+\omega(T(e_{n+k},e_i),e_j)\\
         \label{eq:HermitianTorsionIFFComponents12}
        d\omega(e_i,e_{n+j},e_{n+k})&=\omega(T(e_i,e_{n+j}),e_{n+k})+\omega(T(e_{n+j},e_{n+k}),e_i)+\omega(T(e_{n+k},e_i),e_{n+j})\\
         \label{eq:HermitianTorsionIFFComponents13}
        d\omega(e_{n+i},e_{n+j},e_{n+k})&=\omega(T(e_{n+i},e_{n+j}),e_{n+k})+\omega(T(e_{n+j},e_{n+k}),e_{n+i})+\omega(T(e_{n+k},e_{n+i}),e_{n+j})
    \end{align}
    for all $1\le i,j,k\le n$.  Equations (\ref{eq:HermitianTorsionIFFComponents10})-(\ref{eq:HermitianTorsionIFFComponents13}) is precisely (\ref{eq:HermitianTorsionIFFComponents3})-(\ref{eq:HermitianTorsionIFFComponents6}) respectively.  We expand (\ref{eq:HermitianTorsionIFFComponents13}) using Lemma \ref{lemma:domega}.  The others are handled similarly.
    \begin{align*}
        d\omega(e_{n+i},e_{n+j},e_{n+k})&=\omega(T(e_{n+i},e_{n+j}),e_{n+k})+\omega(T(e_{n+j},e_{n+k}),e_{n+i})+\omega(T(e_{n+k},e_{n+i}),e_{n+j})\\
        -C^k_{n+i,n+j}-C^i_{n+j,n+k}-C^j_{n+k,n+i}&=T_{n+i,n+j}^k+T_{n+j,n+k}^i+T^j_{n+k,n+i}.
    \end{align*}
\end{proof}
Let $\wedge^{(1,1)}\mathfrak{g}^\ast \subset \wedge^2\mathfrak{g}^\ast$ denote the space of real left-invariant 2-forms of type (1,1) with respect to the almost complex structure on $(G,g,J,\omega)$.  
\begin{proposition}
    \label{prop:alphaLeftInvariant}
    A Hermitian connection on $(G,g,J,\omega)$ is left-invariant if and only if $\alpha\in \Omega^{(1,1)}(G;TG)$ in Proposition \ref{prop:HermitianTorsionFormulaReal} belongs to the space $\wedge^{(1,1)}\mathfrak{g}^\ast \otimes \mathfrak{g}$.
\end{proposition}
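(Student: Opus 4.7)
My plan is to reduce the proposition to a statement about the torsion tensor via Proposition \ref{prop:LeftInvariantConnection}, which asserts that a $g$-compatible connection $\nabla$ is left-invariant if and only if its torsion $T$ belongs to $\wedge^2\mathfrak{g}^\ast\otimes\mathfrak{g}$. Thus it suffices to show that under the bijection $\alpha \leftrightarrow T$ provided by Corollary \ref{cor:HermitianOnetoOne}, one has $T \in \wedge^2\mathfrak{g}^\ast\otimes\mathfrak{g}$ if and only if $\alpha\in \wedge^{(1,1)}\mathfrak{g}^\ast\otimes\mathfrak{g}$.

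For the \emph{if} direction, I would assume $\alpha\in \wedge^{(1,1)}\mathfrak{g}^\ast\otimes\mathfrak{g}$ and inspect the torsion formula in Proposition \ref{prop:HermitianTorsionFormulaReal} term by term. The Nijenhuis tensor $N$ lies in $\wedge^2\mathfrak{g}^\ast\otimes\mathfrak{g}$ because $J$ preserves $\mathfrak{g}$ and $[\cdot,\cdot]$ closes on $\mathfrak{g}$; the 3-form $d\omega$ is left-invariant because $\omega$ is, and its $(2,1)+(1,2)$-projection is left-invariant since $J$ is left-invariant (use Proposition \ref{prop:3form(2,1)+(1,2)}); finally $\alpha^+$ is a combination of $g$ evaluated on $\alpha$, which is left-invariant by assumption. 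Assembling these via the formula shows $T\in \wedge^2\mathfrak{g}^\ast\otimes\mathfrak{g}$, and hence $\nabla$ is left-invariant by Proposition \ref{prop:LeftInvariantConnection}.

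For the \emph{only if} direction, assume $\nabla$ is left-invariant, so $T\in \wedge^2\mathfrak{g}^\ast\otimes\mathfrak{g}$. The key input is the observation extracted in the proof of Corollary \ref{cor:HermitianOnetoOne}: evaluating Proposition \ref{prop:HermitianTorsionFormulaReal} on a (1,0)-(0,1) pair collapses the $(d\omega)^+$, $N$, and $\alpha^+$ contributions (using $N(X^+,Y^-)=0$), leaving
\[
g(\alpha(X^+,Y^-),Z)=g(T(X^+,Y^-),Z).
\]
For $X,Y\in\mathfrak{g}$, the complex vector fields $X^+,Y^-$ lie in $\mathfrak{g}\otimes\mathbb{C}$, so $T(X^+,Y^-)\in\mathfrak{g}\otimes\mathbb{C}$ by complex-linear extension, and therefore $\alpha(X^+,Y^-)\in\mathfrak{g}\otimes\mathbb{C}$ since $g$ restricted to $\mathfrak{g}$ is nondegenerate. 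Because $\alpha$ is of type $(1,1)$ we have $\alpha(X^+,Y^+)=\alpha(X^-,Y^-)=0$, and therefore
\[
\alpha(X,Y)=\alpha(X^+,Y^-)+\alpha(X^-,Y^+)=2\,\mathrm{Re}\,\alpha(X^+,Y^-)\in \mathfrak{g}
\]
for all $X,Y\in\mathfrak{g}$. This says precisely that $\alpha\in \wedge^{(1,1)}\mathfrak{g}^\ast\otimes\mathfrak{g}$.

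The only potential obstacle is bookkeeping: one must be careful that the identity $g(\alpha(X^+,Y^-),Z)=g(T(X^+,Y^-),Z)$ extracted from the torsion formula really does hold for all real $Z$ (so that nondegeneracy of $g$ can be applied), and that the complex-linear extension of $T$ to $\mathfrak{g}\otimes\mathbb{C}$ indeed still has values in $\mathfrak{g}\otimes\mathbb{C}$. Both facts are immediate from the tensorial nature of $T$ and $\alpha$, so the argument is essentially unobstructed.
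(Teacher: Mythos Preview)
Your proposal is correct and follows essentially the same route as the paper: reduce to the torsion via Proposition \ref{prop:LeftInvariantConnection}, check the ``if'' direction term-by-term in the formula of Proposition \ref{prop:HermitianTorsionFormulaReal}, and for ``only if'' use the identity $g(\alpha(X^+,Y^-),Z)=g(T(X^+,Y^-),Z)$ from the proof of Corollary \ref{cor:HermitianOnetoOne}. The only cosmetic difference is that the paper rewrites this last identity in real form as $\langle T(X,Y)+T(JX,JY),Z\rangle = 2\langle\alpha(X,Y),Z\rangle$, whereas you phrase the same computation as $\alpha(X,Y)=2\,\mathrm{Re}\,\alpha(X^+,Y^-)$.
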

\begin{proof}
    Let $\alpha\in \Omega^{(1,1)}(G;TG)$.  By Proposition \ref{prop:HermitianTorsionFormulaReal}, $\alpha$ determines the torsion $T$ for a Hermitian connection (which in turn uniquely determines the Hermitian connection).  By Proposition \ref{prop:LeftInvariantConnection}, the Hermitian connection with torsion $T$ is left-invariant if and only if $T\in \wedge^2\mathfrak{g}^\ast\otimes \mathfrak{g}$.  
    
      If $\alpha \in \wedge^{(1,1)}\mathfrak{g}^\ast \otimes \mathfrak{g}\subset \Omega^{(1,1)}(G;TG)$, it follows immediately from the formula in Proposition \ref{prop:HermitianTorsionFormulaReal} that $T\in \wedge^2\mathfrak{g}^\ast \otimes \mathfrak{g}$.

     Now suppose that $T\in \wedge^2\mathfrak{g}^\ast \otimes \mathfrak{g}$ and let $X$, $Y$, and $Z$ be left-invariant vector fields.  Substituting $X^+$, $Y^-$, and $Z$ into the formula for Proposition \ref{prop:HermitianTorsionFormulaReal} and simplifying gives
    $$
        \mg{T(X^+,Y^-),Z}=\mg{\alpha(X^+,Y^-),Z}.
    $$
      Expanding the above equation in terms of $X$ and $Y$ and simplifying gives
    $$
        \mg{T(X,Y)+T(JX,JY),Z}=2\mg{\alpha(X,Y),Z}
    $$
    where we have made use of the fact that $\alpha$ is (1,1).  Since the left side of the equation is a constant for all $X,Y,Z\in \mathfrak{g}$, it follows that $\alpha(X,Y)\in \mathfrak{g}$ for all $X,Y\in \mathfrak{g}$.  Hence, $\alpha \in \wedge^{(1,1)}\mathfrak{g}^\ast \otimes \mathfrak{g}$. 
\end{proof}
\begin{corollary}
    \label{cor:leftinvariantHermitianOneToOne}
    There is a one-to-one correspondence between the space of left-invariant Hermitian connections on $(G,\mg{\cdot,\cdot},J,\omega)$ and the space $\wedge^{(1,1)}\mathfrak{g}^\ast\otimes \mathfrak{g}$.
\end{corollary}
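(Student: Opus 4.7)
The plan is to assemble this corollary as a direct consequence of two results established earlier: Corollary \ref{cor:HermitianOnetoOne}, which gives a bijection between Hermitian connections on $(G,g,J,\omega)$ and $\Omega^{(1,1)}(G;TG)$, and Proposition \ref{prop:alphaLeftInvariant}, which characterizes exactly when such a Hermitian connection is left-invariant in terms of its parameter $\alpha$.

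The correspondence from Corollary \ref{cor:HermitianOnetoOne} sends $\alpha \in \Omega^{(1,1)}(G;TG)$ to the unique Hermitian connection whose torsion is $T^\alpha$ given by the formula of Proposition \ref{prop:HermitianTorsionFormulaReal}. The first step is simply to restrict this bijection to the subset $\wedge^{(1,1)}\mathfrak{g}^\ast\otimes \mathfrak{g} \subset \Omega^{(1,1)}(G;TG)$ of left-invariant (1,1)-forms with values in $\mathfrak{g}$. Since the original map is injective, its restriction to this subspace remains injective.

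For surjectivity onto the space of left-invariant Hermitian connections, I would invoke Proposition \ref{prop:alphaLeftInvariant} directly: the Hermitian connection associated to $\alpha$ is left-invariant if and only if $\alpha \in \wedge^{(1,1)}\mathfrak{g}^\ast\otimes \mathfrak{g}$. Thus every left-invariant Hermitian connection arises (uniquely) from some such $\alpha$. The conclusion is the desired bijection. The main conceptual obstacle was proving the equivalence between left-invariance of the connection and left-invariance of $\alpha$, but this work has already been carried out in Proposition \ref{prop:alphaLeftInvariant}, so at this stage the proof is essentially a one-line assembly.

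\begin{proof}
By Corollary \ref{cor:HermitianOnetoOne}, the map $\alpha \mapsto \nabla^\alpha$ sending $\alpha \in \Omega^{(1,1)}(G;TG)$ to the unique Hermitian connection whose torsion is $T^\alpha$ (as in Proposition \ref{prop:HermitianTorsionFormulaReal}) is a bijection between $\Omega^{(1,1)}(G;TG)$ and the space of Hermitian connections on $(G,\mg{\cdot,\cdot},J,\omega)$. By Proposition \ref{prop:alphaLeftInvariant}, $\nabla^\alpha$ is left-invariant if and only if $\alpha \in \wedge^{(1,1)}\mathfrak{g}^\ast\otimes \mathfrak{g}$. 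Restricting the bijection $\alpha \mapsto \nabla^\alpha$ to $\wedge^{(1,1)}\mathfrak{g}^\ast\otimes \mathfrak{g}$ therefore yields a one-to-one correspondence onto the space of left-invariant Hermitian connections on $(G,\mg{\cdot,\cdot},J,\omega)$.
\end{proof}
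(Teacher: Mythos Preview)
Your proof is correct and follows essentially the same approach as the paper, which simply cites Corollary~\ref{cor:HermitianOnetoOne} and Proposition~\ref{prop:alphaLeftInvariant}. Your version just spells out explicitly how restricting the bijection to $\wedge^{(1,1)}\mathfrak{g}^\ast\otimes\mathfrak{g}$ yields the desired correspondence.
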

\begin{proof}
    This follows from Corollary \ref{cor:HermitianOnetoOne} and Proposition \ref{prop:alphaLeftInvariant}.
\end{proof}
\begin{lemma}
    \label{lemma:alphaComponents}
    Let $\alpha\in \wedge^2\mathfrak{g}^\ast\otimes \mathfrak{g}$.  Then $\alpha \in \wedge^{(1,1)}\mathfrak{g}^\ast\otimes \mathfrak{g}$ if and only if the components $\alpha^c_{ab}:=\mg{\alpha(e_a,e_b),e_c}$ satisfy the following conditions for all $1\le i,j\le n$ and $1\le c\le 2n$:
    \begin{itemize}
        \item[(a)] $\alpha_{ij}^c=\alpha_{n+i, n+j}^c$
        \item[(b)] $\alpha_{i,n+j}^c=-\alpha_{n+i, j}^c$
    \end{itemize}
    where all other components are determined by skew-symmetry in the lower indices.
\end{lemma}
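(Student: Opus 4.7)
The plan is to reduce the $(1,1)$-type condition for $\alpha$ to the pointwise identity $\alpha(JX,JY)=\alpha(X,Y)$, which is the standard characterization of real $\mathfrak{g}$-valued $(1,1)$-forms on a vector space with an almost complex structure. Since $\alpha \in \wedge^2\mathfrak{g}^\ast\otimes\mathfrak{g}$ is already bilinear and skew-symmetric, and since $\wedge^{(1,1)}\mathfrak{g}^\ast\otimes\mathfrak{g}$ consists of those $\alpha$ for which $\alpha(JX,JY)=\alpha(X,Y)$ for all $X,Y\in\mathfrak{g}$, it will suffice to test this identity on the basis pairs $(e_a,e_b)$ arising from the standard frame.

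First, I would fix indices $1\le i,j\le n$ and consider three kinds of inputs:
\begin{itemize}
    \item $(e_i,e_j)$: here $Je_i=e_{n+i}$, $Je_j=e_{n+j}$, so the condition $\alpha(Je_i,Je_j)=\alpha(e_i,e_j)$ gives $\alpha(e_{n+i},e_{n+j})=\alpha(e_i,e_j)$, i.e.\ condition (a) after pairing with $e_c$.
    \item $(e_i,e_{n+j})$: here $Je_{n+j}=-e_j$, so $\alpha(Je_i,Je_{n+j})=-\alpha(e_{n+i},e_j)$, and the condition becomes $\alpha(e_i,e_{n+j})=-\alpha(e_{n+i},e_j)$, i.e.\ condition (b).
    \item $(e_{n+i},e_{n+j})$: applying $J$ to both arguments yields $\alpha(-e_i,-e_j)=\alpha(e_i,e_j)$, which is automatic by bilinearity and reproduces (a) again, so it adds no new constraint.
\end{itemize}

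For the converse, assuming (a) and (b) hold on all such index pairs, I would verify $\alpha(JX,JY)=\alpha(X,Y)$ for arbitrary $X,Y\in\mathfrak{g}$ by linearly extending: writing $X=\sum_a X^a e_a$ and $Y=\sum_b Y^b e_b$, the identity reduces to checking it on each basis pair, and the three cases above (together with skew-symmetry, which handles the reversed pairs such as $(e_{n+j},e_i)$) cover every bilinear contribution. Thus (a) and (b) are equivalent to the $(1,1)$ condition. The proof is essentially bookkeeping; the only step requiring any care is ensuring that the basis pairs enumerated above, together with skew-symmetry in the lower indices, genuinely cover all of $\wedge^2\mathfrak{g}^\ast$, but this is immediate since every such pair is of the listed form or its transpose.
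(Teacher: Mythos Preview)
Your proposal is correct and follows essentially the same approach as the paper: both reduce the $(1,1)$ condition to the identity $\alpha(JX,JY)=\alpha(X,Y)$ and then test it on basis pairs from the standard frame, obtaining (a) from $(e_i,e_j)$ and (b) from $(e_i,e_{n+j})$. Your write-up is slightly more explicit in noting that the case $(e_{n+i},e_{n+j})$ is redundant and in spelling out the linear extension for the converse, but the argument is the same.
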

\begin{proof}
The 2-form $\alpha$ is of type $(1,1)$ if and only if $\alpha(e_r, e_s)=\alpha(Je_r, Je_s)$ for all $1\le r, s\le 2n$.  This is equivalent to the requirement that for all $1\le i,j\le n$ and $1\le c\le 2n$ one has
\begin{align}
    \label{eq:alphaComponents1}
    \mg{\alpha(e_i,e_j),e_c}&=\mg{\alpha(e_{n+i},e_{n+j}),e_c}\\
    \label{eq:alphaComponents2}
    \mg{\alpha(e_i,e_{n+j}),e_{c}}&=-\mg{\alpha(e_{n+i},e_{j}),e_{c}}
\end{align}
Expanding (\ref{eq:alphaComponents1}) and (\ref{eq:alphaComponents2}) gives (a) and (b) respectively in Lemma \ref{lemma:alphaComponents}.
\end{proof}
\noindent For $\alpha\in \wedge^{(1,1)}\mathfrak{g}^\ast\otimes \mathfrak{g}$, we define 
$$
\alpha^{+}(X,Y,Z):=\mg{\alpha(X,Y),Z}+\mg{\alpha(Y,Z),X}+\mg{\alpha(Z,X),Y}
$$
for all vector fields $X,Y,Z$ on $G$.  Note that $\alpha^+$ is a (real) 3-form of type (2,1)+(1,2).  Following the notation of Section \ref{sec:Preliminaries}, we write $\eta^+$ to denote the (2,1)+(1,2) part of any 3-form $\eta\in \Omega^3(G)$.  
\begin{lemma}
    \label{lemma:alpha+}
    For $\alpha\in \wedge^{(1,1)}\mathfrak{g}^\ast\otimes \mathfrak{g}$ and $1\le a,b,c\le 2n$
    $$
        \alpha^+_{abc}:=\alpha^+(e_a,e_b,e_c)=\alpha^c_{ab}+\alpha^a_{bc}+\alpha_{ca}^b
    $$
    where $\alpha^c_{ab}:=\mg{\alpha(e_a,e_b),e_c}$.
\end{lemma}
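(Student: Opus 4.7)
The plan is to observe that this lemma is a direct unpacking of definitions, requiring essentially no work beyond substituting the standard frame vectors into the definition of $\alpha^+$.

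Specifically, I would recall that $\alpha^+$ was defined just above the lemma by
\[
\alpha^+(X,Y,Z):=\mg{\alpha(X,Y),Z}+\mg{\alpha(Y,Z),X}+\mg{\alpha(Z,X),Y}
\]
for all vector fields $X,Y,Z$ on $G$. Setting $X=e_a$, $Y=e_b$, $Z=e_c$ and using the component notation $\alpha^c_{ab}:=\mg{\alpha(e_a,e_b),e_c}$ from the statement of the lemma, the three terms on the right become $\alpha^c_{ab}$, $\alpha^a_{bc}$, and $\alpha^b_{ca}$ respectively, yielding exactly the claimed identity. Since $\alpha$ is already in $\wedge^2\mathfrak{g}^\ast \otimes \mathfrak{g}$, each $\mg{\alpha(e_{\bullet},e_{\bullet}),e_{\bullet}}$ is a well-defined real constant, so no regularity or left-invariance issue arises.

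There is no real obstacle here: the only point worth emphasizing is that the hypothesis $\alpha\in \wedge^{(1,1)}\mathfrak{g}^\ast\otimes \mathfrak{g}$ (as opposed to general $\wedge^2\mathfrak{g}^\ast\otimes \mathfrak{g}$) is not actually used in the proof of this formula; it is recorded in the hypothesis only because the subsequent use of $\alpha^+$ in the paper is for such $\alpha$. Thus the lemma can be stated and proved as a purely algebraic identity, and I would present the proof as a one-line computation directly from the definition.
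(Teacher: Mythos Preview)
Your proposal is correct and matches the paper's own proof, which simply reads ``Immediate.'' Your observation that the $(1,1)$ hypothesis is not actually used in establishing the formula is accurate and a nice clarifying remark.
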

\begin{proof}
    Immediate.
\end{proof}
\begin{lemma}
    \label{lemma:domega+}
    $(d\omega)^+$ is given as follows: for $1\le i,j,k\le n$
    \begin{align}
        \label{eq:domega+1}
        (d\omega)^+(e_i,e_j,e_k)&=\frac{1}{4}[3C^{n+k}_{ij}+3C^{n+i}_{jk}+3C^{n+j}_{ki}+C^i_{k,n+j}-C^{n+k}_{n+j,n+i}+C^j_{n+i,k}]\\
        \nonumber
        &+\frac{1}{4}[C^k_{j,n+i}-C^{n+j}_{n+i,n+k}+C^i_{n+k,j}-C^k_{i,n+j}+C^{n+i}_{n+j,n+k}-C^j_{n+k,i}]
    \end{align}
    \begin{align}   
        \label{eq:domega+2}
        (d\omega)^+(e_i,e_j,e_{n+k})&=\frac{1}{4}[-3C^k_{ij}+3C^{n+i}_{j,n+k}+3C^{n+j}_{n+k,i}-C^k_{n+i,n+j}-C^i_{n+j,n+k}-C^j_{n+k,n+i}]\\
        \nonumber
        &+\frac{1}{4}[C^i_{jk}-C^{n+j}_{k,n+i}-C^{n+k}_{n+i,j}-C^j_{ik}+C^{n+i}_{k,n+j}+C^{n+k}_{n+j,i}]
    \end{align}
    \begin{align}
        \label{eq:domega+3}
        (d\omega)^+(e_i,e_{n+j},e_{n+k})&=\frac{1}{4}[-3C^k_{i,n+j}+3C^{n+i}_{n+j,n+k}-3C^j_{n+k,i}-C^k_{j,n+i}+C^{n+j}_{n+i,n+k}-C^i_{n+k,j}]\\
        \nonumber
        &+\frac{1}{4}[C^j_{k,n+i}-C^{n+k}_{n+i,n+j}+C^i_{n+j,k}+C^{n+k}_{ij}+C^{n+i}_{jk}+C^{n+j}_{ki}]
    \end{align} 
    \begin{align}
        \label{eq:domega+4}
        (d\omega)^+(e_{n+i},e_{n+j},e_{n+k})&=\frac{1}{4}[-3C^k_{n+i,n+j}-3C^i_{n+j,n+k}-3C^j_{n+k,n+i}-C^k_{ij}+C^{n+i}_{j,n+k}+C^{n+j}_{n+k,i}]\\
        \nonumber
        &+\frac{1}{4}[C^j_{ik}-C^{n+i}_{k,n+j}-C^{n+k}_{n+j,i}-C^i_{jk}+C^{n+j}_{k,n+i}+C^{n+k}_{n+i,j}]
    \end{align}
\end{lemma}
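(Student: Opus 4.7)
My plan is to derive all four formulas by applying Proposition \ref{prop:3form(2,1)+(1,2)} with $\eta = d\omega$ and then invoking Lemma \ref{lemma:domega} to express each resulting $d\omega$-evaluation in terms of the structure constants $C^k_{ij}$. That is, for each representative triple $(X,Y,Z)$ drawn from $\{(e_i,e_j,e_k), (e_i,e_j,e_{n+k}), (e_i,e_{n+j},e_{n+k}), (e_{n+i},e_{n+j},e_{n+k})\}$, I apply the identity
\[
(d\omega)^+(X,Y,Z) = \tfrac{1}{4}\bigl[3\,d\omega(X,Y,Z) + d\omega(JX,JY,Z) + d\omega(JX,Y,JZ) + d\omega(X,JY,JZ)\bigr]
\]
and evaluate each of the four terms on the right-hand side separately.

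The translation from $J$-transformed arguments to structure constants proceeds via $Je_i = e_{n+i}$ and $Je_{n+i} = -e_i$, so each of the three ``twisted'' terms will pick up a sign precisely when $Z$ (or the twisted argument) lies in the $(e_{n+1},\dots,e_{2n})$ block. After this rewriting, every term inside the brackets has all of its arguments in the standard frame, and one of the four cases of Lemma \ref{lemma:domega} applies---possibly after a skew-symmetric permutation of the three arguments to put the $e_{n+\ell}$'s to the right, which costs no extra signs provided one performs a cyclic permutation, or one sign flip for a transposition. In each case, the $3\,d\omega(X,Y,Z)$ term produces the coefficient $3/4$ in front of the ``pure'' block of structure constants appearing in the stated formulas, while the three twisted terms contribute the $1/4$-weighted remainders.

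Concretely, for formula (\ref{eq:domega+1}) the first term yields $\tfrac{3}{4}(C^{n+k}_{ij} + C^{n+i}_{jk} + C^{n+j}_{ki})$ directly from Lemma \ref{lemma:domega}(1); the remaining three terms become $d\omega(e_{n+i},e_{n+j},e_k)$, $d\omega(e_{n+i},e_j,e_{n+k})$, and $d\omega(e_i,e_{n+j},e_{n+k})$, each of which is brought into the form of Lemma \ref{lemma:domega}(3) by a cyclic relabeling and then expanded. An identical procedure handles (\ref{eq:domega+2})--(\ref{eq:domega+4}), with the only change being which of the four cases of Lemma \ref{lemma:domega} is invoked at each step and which arguments acquire a minus sign from $J^2 = -\mathrm{id}$.

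The only real obstacle is bookkeeping: each of the four stated formulas involves twelve structure constants with specific index placements and signs, and one must not drop a sign when rewriting $d\omega(\dots,-e_i,\dots)$ or when cyclically permuting to apply Lemma \ref{lemma:domega}. Since the calculation is purely mechanical once the scheme is fixed, the numerical Octave verification mentioned in the introduction provides a reliable check against transcription errors.
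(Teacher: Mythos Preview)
Your approach is correct and essentially identical to the paper's own proof: the paper also applies Proposition~\ref{prop:3form(2,1)+(1,2)} with $\eta=d\omega$, rewrites the $J$-twisted arguments via $Je_i=e_{n+i}$ and $Je_{n+i}=-e_i$, permutes arguments (using transpositions rather than cyclic shifts, but equivalently) to match one of the four cases of Lemma~\ref{lemma:domega}, and then reads off the structure constants. The paper carries out the computation explicitly only for~(\ref{eq:domega+1}) and states that the other three follow in the same way, just as you describe.
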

\begin{proof}
    We compute only (\ref{eq:domega+1}).  The other cases are computed in the same way.  Using Proposition \ref{prop:3form(2,1)+(1,2)} and Lemma \ref{lemma:domega}, we have
    \begin{align*}
        (d\omega)^+(e_i,e_j,e_k)&=\frac{1}{4}[3d\omega(e_i,e_j,e_k)+d\omega(e_{n+i},e_{n+j},e_k)+d\omega(e_{n+i},e_j,e_{n+k})+d\omega(e_i,e_{n+j},e_{n+k})]\\
        &=\frac{1}{4}[3d\omega(e_i,e_j,e_k)-d\omega(e_k,e_{n+j},e_{n+i})-d\omega(e_j,e_{n+i},e_{n+k})+d\omega(e_i,e_{n+j},e_{n+k})]\\
        &=\frac{1}{4}[3C_{ij}^{n+k}+3C_{jk}^{n+i}+3C_{ki}^{n+j}+C_{k,n+j}^i-C_{n+j, n+i}^{n+k}+C_{n+i, k}^j]\\
        &+\frac{1}{4}[C_{j,n+i}^k-C_{n+i, n+k}^{n+j}+C_{n+k, j}^i-C_{i,n+j}^k+C_{n+j, n+k}^{n+i}-C_{n+k, i}^j]
    \end{align*}
\end{proof}
\begin{lemma}
    \label{lemma:TorsionPattern}
    Let $\alpha\in \wedge^{(1,1)}\mathfrak{g}^\ast\otimes \mathfrak{g}$ and let $\nabla^\alpha$ the left-invariant Hermitian connections whose torsion is associated to $\alpha$ by Proposition \ref{prop:HermitianTorsionFormulaReal}.  Let $T^\alpha$ denote the torsion of $\nabla^\alpha$.  Then
    \begin{equation}
        \label{eq:TorsionPattern}
        \mg{T^\alpha(X,JY),Z}=-\mg{T^\alpha(X,Y),JZ}-\frac{1}{2}\mg{N(X,Y),JZ}+\mg{\alpha(X,Y),JZ}+\mg{\alpha(X,JY),Z}
    \end{equation}
    for all vector fields $X,Y,Z$ on $G$.
\end{lemma}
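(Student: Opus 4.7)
The plan is to rearrange the statement into the additive form
\[
\langle T^\alpha(X,JY),Z\rangle + \langle T^\alpha(X,Y),JZ\rangle
= -\tfrac12\langle N(X,Y),JZ\rangle + \langle \alpha(X,JY),Z\rangle + \langle \alpha(X,Y),JZ\rangle
\]
and verify it by plugging the general torsion formula from Proposition \ref{prop:HermitianTorsionFormulaReal} into both terms on the left. Concretely, I would apply that formula once with $Y$ replaced by $JY$ and once with $Z$ replaced by $JZ$, then add and use $J^2=-1$ to clean up each of the six summands (Nijenhuis piece, two $(d\omega)^+$ pieces, two $\alpha^+$ pieces, and the pure $\alpha$ piece). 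The $\alpha$-piece already matches the right-hand side on the nose, so the work reduces to showing that the Nijenhuis terms combine to $-\tfrac12\langle N(X,Y),JZ\rangle$ and that the $(d\omega)^+$ and $\alpha^+$ sums each vanish.

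Two ingredients drive the cancellations. The first is the elementary identity $N(X,JY)=-JN(X,Y)$, which follows by expanding the definition (\ref{def:Nijenhuis}) and using $J^2=-I$; together with metric compatibility of $J$ this gives $\langle N(X,JY),Z\rangle = \langle N(X,Y),JZ\rangle$, so the Nijenhuis contributions $-\tfrac14\langle N(X,JY),Z\rangle - \tfrac14\langle N(X,Y),JZ\rangle$ collapse to $-\tfrac12\langle N(X,Y),JZ\rangle$. The second ingredient is the pointwise identity
\[
\eta(X,Y,Z) = \eta(JX,JY,Z) + \eta(JX,Y,JZ) + \eta(X,JY,JZ)
\]
valid for every real 3-form $\eta$ of type $(2,1)+(1,2)$; this is immediate from Proposition \ref{prop:3form(2,1)+(1,2)} upon setting $\eta^+=\eta$ and rearranging. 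Applying this identity to $\eta=(d\omega)^+$ yields
\[
(d\omega)^+(JX,Y,JZ)+(d\omega)^+(X,JY,JZ)+(d\omega)^+(JX,JY,Z)-(d\omega)^+(X,Y,Z)=0,
\]
which is precisely the combination of $(d\omega)^+$-terms produced by the addition, so they cancel with the factors of $\tfrac12$. Substituting $Z\mapsto JZ$ in the same identity applied to $\eta=\alpha^+$ (noting $\alpha^+\in \Omega^{3+}(M)$ by Proposition \ref{prop:(1,1)to(2,1)+(2,1)}) gives
\[
\alpha^+(X,JY,Z)+\alpha^+(JX,Y,Z)+\alpha^+(X,Y,JZ)-\alpha^+(JX,JY,JZ)=0,
\]
so the $\alpha^+$-terms also cancel.

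Assembling these cancellations, the only surviving contributions to the sum are the Nijenhuis combination $-\tfrac12\langle N(X,Y),JZ\rangle$ together with $\langle \alpha(X,JY),Z\rangle$ from the first evaluation and $\langle \alpha(X,Y),JZ\rangle$ from the second, which is exactly the right-hand side of (\ref{eq:TorsionPattern}). Note that nothing in the argument uses left-invariance or the Lie group setting; the identity is really a pointwise algebraic consequence of the torsion formula of Proposition \ref{prop:HermitianTorsionFormulaReal} and holds verbatim for any $\alpha\in \Omega^{(1,1)}(M;TM)$ on an arbitrary almost Hermitian manifold.

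The main obstacle, such as it is, is purely bookkeeping: one has to keep track of the sign flips coming from $J^2=-I$ when substituting $Y\mapsto JY$ and $Z\mapsto JZ$ in the four $J$-twisted summands of Proposition \ref{prop:HermitianTorsionFormulaReal}, and then correctly recognise the resulting sums as the two cancellation identities above. There is no real conceptual difficulty; the only risk is sign errors in one of the eight or so $J$-shifts, which a careful side-by-side expansion handles.
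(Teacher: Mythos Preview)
Your proposal is correct and follows essentially the same approach as the paper: both arguments substitute the torsion formula of Proposition~\ref{prop:HermitianTorsionFormulaReal}, invoke the identity $N(X,JY)=-JN(X,Y)$ for the Nijenhuis contribution, and use the $(2,1)+(1,2)$ identity $\eta(X,Y,Z)=\eta(JX,JY,Z)+\eta(JX,Y,JZ)+\eta(X,JY,JZ)$ (and its $Z\mapsto JZ$ variant) to handle the $(d\omega)^+$ and $\alpha^+$ terms. The only cosmetic difference is that the paper transforms $\langle T^\alpha(X,JY),Z\rangle$ step by step until it is recognisable as the right-hand side, whereas you add both torsion terms first and exhibit the cancellations; your closing remark that the identity is purely pointwise and needs no left-invariance is also correct.
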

\begin{proof}
    By Proposition \ref{prop:HermitianTorsionFormulaReal}, we have
    \begin{align*}
        \mg{T^\alpha(X,Y),Z}&=-\frac{1}{4}\mg{N(X,Y),Z}-\frac{1}{2}(d\omega)^+(JX,JY,JZ)+\frac{1}{2}(d\omega)^+(X,Y,JZ)\\
                &+\frac{1}{2}\alpha^+(X,Y,Z)-\frac{1}{2}\alpha^+(JX,JY,Z)+\mg{\alpha(X,Y),Z}
    \end{align*}
    It follows from Proposition \ref{prop:3form(2,1)+(1,2)} that if $\eta^+$ is a 3-form of type (2,1)+(1,2) then
    $$
        \eta^+(X,Y,Z)=\eta^+(JX,JY,Z)+\eta^+(JX,Y,JZ)+\eta^+(X,JY,JZ).
    $$
    Hence, 
    $$
        \eta^+(X,Y,Z)-\eta^+(JX,JY,Z)=\eta^+(JX,Y,JZ)+\eta^+(X,JY,JZ)
    $$
    and
    $$
        \eta^+(JX,JY,JZ)-\eta^+(X,Y,JZ)=\eta^+(X,JY,Z)+\eta^+(JX,Y,Z).
    $$
    Using these identities, we have
      \begin{align*}
        \mg{T^\alpha(X,JY),Z}&=-\frac{1}{4}\mg{N(X,JY),Z}+\frac{1}{2}(d\omega)^+(JX,Y,JZ)+\frac{1}{2}(d\omega)^+(X,JY,JZ)\\
                &+\frac{1}{2}\alpha^+(X,JY,Z)+\frac{1}{2}\alpha^+(JX,Y,Z)+\mg{\alpha(X,JY),Z}\\
                &=-\frac{1}{4}\mg{-JN(X,Y),Z}+\frac{1}{2}(d\omega)^+(X,Y,Z)-\frac{1}{2}(d\omega)^+(JX,JY,Z)\\
                &+\frac{1}{2}\alpha^+(JX,JY,JZ)-\frac{1}{2}\alpha^+(X,Y,JZ)+\mg{\alpha(X,JY),Z}\\
                &=-\frac{1}{4}\mg{N(X,Y),JZ}+\frac{1}{2}(d\omega)^+(X,Y,Z)-\frac{1}{2}(d\omega)^+(JX,JY,Z)\\
                &+\frac{1}{2}\alpha^+(JX,JY,JZ)-\frac{1}{2}\alpha^+(X,Y,JZ)+\mg{\alpha(X,JY),Z}\\
                &=-\mg{T^{\alpha}(X,Y),JZ}-\frac{1}{2}\mg{N(X,Y),JZ}+\mg{\alpha(X,Y),JZ}+\mg{\alpha(X,JY),Z}\\
    \end{align*}
    
\end{proof}
\noindent At this point, we have all the ingredients needed to compute the torsion components of a left-invariant Hermitian connection (the components of a left-invariant Hermitian connection is then computed using Lemma \ref{lemma:nablaEi}).  In particular, we also compute the torsion components of a left-invariant Gauduchon connection.  
\begin{proposition}
    \label{prop:LeftInvariantHermitianTorsion}
    Let $\alpha\in \wedge^{(1,1)}\mathfrak{g}^\ast\otimes \mathfrak{g}$ and let $\alpha^c_{ab}:=\mg{\alpha(e_a,e_b),e_c}$.  Let $\nabla^\alpha$ the left-invariant Hermitian connections whose torsion is associated to $\alpha$ by Proposition \ref{prop:HermitianTorsionFormulaReal}.  Let $T^\alpha$ denote the torsion of $\nabla^\alpha$.  Then for $1\le i,j,k\le n$
    \begin{align}
        \nonumber
        \mg{&T^\alpha(e_i,e_j),e_k}=-\frac{1}{2}C^k_{ij}-\frac{1}{4}C^j_{ik}+\frac{1}{4}C^i_{jk}+\frac{1}{2}C^k_{n+i,n+j}+\frac{1}{4}C^i_{n+j,n+k}+\frac{1}{4}C^j_{n+k,n+i}+\frac{1}{4}C^{n+i}_{j,n+k}\\
        \label{eq:LeftInvariantHermitianTorsion1}
        &+\frac{1}{4}C^{n+j}_{n+k,i}+\frac{1}{4}C^{n+i}_{k,n+j}-\frac{1}{4}C^{n+j}_{k,n+i}+\alpha^k_{ij}+\frac{1}{2}\alpha^i_{jk}+\frac{1}{2}\alpha^j_{ki}-\frac{1}{2}\alpha^{n+i}_{n+j,k}-\frac{1}{2}\alpha^{n+j}_{k,n+i}
    \end{align}
    \begin{align}
        \nonumber
        \mg{&T^\alpha(e_i,e_j),e_{n+k}}=-\frac{1}{2}C^{n+k}_{ij}+\frac{1}{2}C^{n+k}_{n+i,n+j}-\frac{1}{4}C^j_{k,n+i} -\frac{1}{4}C^i_{n+j,k}-\frac{1}{4}C^j_{i,n+k} + \frac{1}{4}C^{n+i}_{n+k,n+j} + \frac{1}{4}C^i_{j,n+k} \\
        \label{eq:LeftInvariantHermitianTorsion2}
        &-\frac{1}{4}C^{n+j}_{n+k,n+i} -\frac{1}{4}C^{n+j}_{ki} -\frac{1}{4}C^{n+i}_{jk} +\alpha^{n+k}_{ij} + \frac{1}{2}\alpha^i_{j,n+k} + \frac{1}{2}\alpha^j_{n+k,i} - \frac{1}{2}\alpha^{n+i}_{n+j,n+k} - \frac{1}{2}\alpha^{n+j}_{n+k,n+i} 
    \end{align}
    \begin{align}
        \nonumber
        \mg{&T^\alpha(e_i,e_{n+j}),e_k}=-\frac{1}{2}C^k_{n+i,j} - \frac{1}{2}C^k_{i,n+j} - \frac{1}{4}C^{n+j}_{n+i,n+k} +\frac{1}{4}C^i_{n+k,j} + \frac{1}{4}C^{n+i}_{n+j,n+k} - \frac{1}{4}C^j_{n+k,i} - \frac{1}{4}C^i_{k,n+j} \\
        \label{eq:LeftInvariantHermitianTorsion3}
        &-\frac{1}{4}C^j_{n+i,k} - \frac{1}{4}C^{n+j}_{ik} -\frac{1}{4}C^{n+i}_{kj} + \alpha^k_{i,n+j} +\frac{1}{2}\alpha^i_{n+j,k} + \frac{1}{2}\alpha^{n+j}_{ki} + \frac{1}{2}\alpha^{n+i}_{jk} + \frac{1}{2}\alpha^j_{k,n+i} 
    \end{align}
    \begin{align}
        \nonumber
        \mg{&T^\alpha(e_i,e_{n+j}),e_{n+k}}=-\frac{1}{2}C^{n+k}_{n+i,j} - \frac{1}{2}C^{n+k}_{i,n+j} + \frac{1}{4}C^i_{jk} -\frac{1}{4}C^{n+j}_{k,n+i} + \frac{1}{4}C^i_{n+j,n+k} + \frac{1}{4}C^j_{n+k,n+i} +\frac{1}{4}C^j_{ki} \\
        \label{eq:LeftInvariantHermitianTorsion4}
        &-\frac{1}{4}C^{n+i}_{n+j,k} -\frac{1}{4}C^{n+j}_{i,n+k} -\frac{1}{4}C^{n+i}_{n+k,j} +\alpha^{n+k}_{i,n+j} +\frac{1}{2}\alpha^i_{n+j,n+k} + \frac{1}{2}\alpha^{n+j}_{n+k,i} +\frac{1}{2}\alpha^{n+i}_{j,n+k} +\frac{1}{2}\alpha^j_{n+k,n+i}
    \end{align}
    \begin{align}
        \nonumber
        \mg{&T^\alpha(e_{n+i},e_{n+j}),e_k}=-\frac{1}{2} C_{n+i, n+j}^k + \frac{1}{2} C_{ij}^k -\frac{1}{4}C_{n+j, n+k}^i - \frac{1}{4}C_{n+k, n+i}^j -\frac{1}{4}C_{j, n+k}^{n+i} -\frac{1}{4}C_{n+k, i}^{n+j} - \frac{1}{4} C_{k, n+j}^{n+i}\\
        \label{eq:LeftInvariantHermitianTorsion5}
         &+\frac{1}{4} C_{ik}^{j}+\frac{1}{4}C_{kj}^i + \frac{1}{4}C_{k, n+i}^{n+j} + \alpha_{n+i, n+j}^k+\frac{1}{2}\alpha^j_{n+i,n+k}+\frac{1}{2}\alpha^{n+i}_{n+k,j}+\frac{1}{2}\alpha^{n+j}_{i,n+k}+\frac{1}{2}\alpha^i_{n+k,n+j}
    \end{align}
    \begin{align}
        \nonumber
        \mg{&T^\alpha(e_{n+i},e_{n+j}),e_{n+k}}=-\frac{1}{2}C_{n+i, n+j}^{n+k}+\frac{1}{2}C_{ij}^{n+k} +\frac{1}{4}C_{n+j, k}^i +\frac{1}{4}C_{k,n+i}^j +\frac{1}{4}C_{jk}^{n+i} +\frac{1}{4} C_{ki}^{n+j} +\frac{1}{4}C_{n+j,n+k}^{n+i}\\
        \label{eq:LeftInvariantHermitianTorsion6}
        &-\frac{1}{4}C_{n+k, i}^j-\frac{1}{4}C_{j, n+k}^i + \frac{1}{4}C_{n+k, n+i}^{n+j} + \alpha_{n+i, n+j}^{n+k} + \frac{1}{2}\alpha_{n+j, n+k}^{n+i} +\frac{1}{2}\alpha_{n+k, n+i}^{n+j} - \frac{1}{2}\alpha_{j, n+k}^i -\frac{1}{2}\alpha_{n+k, i}^j
    \end{align}
\end{proposition}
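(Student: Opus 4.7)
The plan is to read off each of the six components directly from the general torsion formula in Proposition \ref{prop:HermitianTorsionFormulaReal} by substituting the appropriate frame vectors and expanding every term with the structural lemmas already established. Specifically, for each representative index pattern $(a,b,c)$, set $X=e_a$, $Y=e_b$, $Z=e_c$ in
\[
\mg{T^\alpha(X,Y),Z}=-\tfrac14\mg{N(X,Y),Z}-\tfrac12(d\omega)^+(JX,JY,JZ)+\tfrac12(d\omega)^+(X,Y,JZ)+\tfrac12\alpha^+(X,Y,Z)-\tfrac12\alpha^+(JX,JY,Z)+\mg{\alpha(X,Y),Z},
\]
and evaluate each of the six terms.

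The Nijenhuis contribution is handled by Lemma \ref{lemma:NijenhuisLeftInvariant}, using (\ref{eq:NijenhuisLeftInvariant1})--(\ref{eq:NijenhuisLeftInvariant2}) to reduce any index pattern to the two basic cases $N^k_{ij}$ and $N^{n+k}_{ij}$. The two $(d\omega)^+$ terms are evaluated via Lemma \ref{lemma:domega+}, which gives all four basic index patterns; the action of $J$ on slots (sending $e_i\mapsto e_{n+i}$ and $e_{n+i}\mapsto -e_i$) must be tracked carefully, and each application of $J$ either shifts an index by $n$ or introduces a minus sign. The $\alpha^+$ contributions expand via Lemma \ref{lemma:alpha+} as cyclic sums of the components $\alpha^c_{ab}$, and the $(1,1)$-type relations of Lemma \ref{lemma:alphaComponents}, namely $\alpha^c_{ij}=\alpha^c_{n+i,n+j}$ and $\alpha^c_{i,n+j}=-\alpha^c_{n+i,j}$, are then used to consolidate the resulting five $\alpha$-terms in each formula.

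A useful labor-saving device is Lemma \ref{lemma:TorsionPattern}: once (\ref{eq:LeftInvariantHermitianTorsion1}) and (\ref{eq:LeftInvariantHermitianTorsion2}) are in hand, the identity
\[
\mg{T^\alpha(X,JY),Z}=-\mg{T^\alpha(X,Y),JZ}-\tfrac12\mg{N(X,Y),JZ}+\mg{\alpha(X,Y),JZ}+\mg{\alpha(X,JY),Z}
\]
allows the remaining four formulas to be bootstrapped by inserting $J$ into one or two arguments. Alternatively, one can simply compute all six cases independently from scratch, as the paper indicates was done (with the final expressions checked numerically in Octave as a sanity check on the many sign and index manipulations).

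The only real obstacle is bookkeeping: each right-hand side involves roughly ten to fifteen structure-constant terms plus five $\alpha$-terms, and systematic errors are easy to make when $J$ is applied inside $(d\omega)^+$ to multiple slots at once, or when the $(1,1)$-symmetries of $\alpha$ are used to rewrite terms. There are no conceptual difficulties; every ingredient has been assembled in Lemmas \ref{lemma:NijenhuisLeftInvariant}, \ref{lemma:domega+}, \ref{lemma:alpha+}, and \ref{lemma:alphaComponents}, and the proof reduces to a careful, case-by-case substitution and collection of like terms.
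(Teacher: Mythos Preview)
Your proposal is correct and matches the paper's approach exactly: compute (\ref{eq:LeftInvariantHermitianTorsion1}) and (\ref{eq:LeftInvariantHermitianTorsion2}) directly from Proposition \ref{prop:HermitianTorsionFormulaReal} using Lemmas \ref{lemma:NijenhuisLeftInvariant}, \ref{lemma:alphaComponents}, \ref{lemma:alpha+}, and \ref{lemma:domega+}, then bootstrap the remaining four via Lemma \ref{lemma:TorsionPattern}. The paper carries out precisely this plan, deriving (\ref{eq:LeftInvariantHermitianTorsion3}) and (\ref{eq:LeftInvariantHermitianTorsion4}) from (\ref{eq:LeftInvariantHermitianTorsion2}) and (\ref{eq:LeftInvariantHermitianTorsion1}) respectively, and then (\ref{eq:LeftInvariantHermitianTorsion5}) and (\ref{eq:LeftInvariantHermitianTorsion6}) from (\ref{eq:LeftInvariantHermitianTorsion4}) and (\ref{eq:LeftInvariantHermitianTorsion3}).
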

\begin{proof}
     (\ref{eq:LeftInvariantHermitianTorsion1}) is computed using Proposition \ref{prop:HermitianTorsionFormulaReal} and Lemmas \ref{lemma:NijenhuisLeftInvariant}, \ref{lemma:alphaComponents}, \ref{lemma:alpha+}, and \ref{lemma:domega+}:
     \begin{align*}
        \mg{T^\alpha(e_i,e_j),e_k}&=-\frac{1}{4}N^k_{ij}-\frac{1}{2}(d\omega)^+(e_{n+i},e_{n+j},e_{n+k})+\frac{1}{2}(d\omega)^+(e_i,e_j,e_{n+k})\\
        &+\frac{1}{2}\alpha^+(e_i,e_j,e_k)-\frac{1}{2}\alpha^+(e_{n+i},e_{n+j},e_k)+\alpha^k_{ij}\\
        &=-\frac{1}{4}[-C^{n+k}_{n+i,j}-C^{n+k}_{i,n+j}+C^k_{ij}-C^k_{n+i,n+j}]\\
        &-\frac{1}{8}[-3C^k_{n+i,n+j}-3C^i_{n+j,n+k}-3C^j_{n+k,n+i}-C^k_{ij}+C^{n+i}_{j,n+k}+C^{n+j}_{n+k,i}]\\
        &-\frac{1}{8}[C^j_{ik}-C^{n+i}_{k,n+j}-C^{n+k}_{n+j,i}-C^i_{jk}+C^{n+j}_{k,n+i}+C^{n+k}_{n+i,j}]\\
        &+\frac{1}{8}[-3C^k_{ij}+3C^{n+i}_{j,n+k}+3C^{n+j}_{n+k,i}-C^k_{n+i,n+j}-C^i_{n+j,n+k}-C^j_{n+k,n+i}]\\
        &+\frac{1}{8}[C^i_{jk}-C^{n+j}_{k,n+i}-C^{n+k}_{n+i,j}-C^j_{ik}+C^{n+i}_{k,n+j}+C^{n+k}_{n+j,i}]\\
        &+\frac{1}{2}[\alpha^k_{ij}+\alpha^i_{jk}+\alpha_{ki}^j]-\frac{1}{2}[\alpha^k_{n+i,n+j}+\alpha^{n+i}_{n+j,k}+\alpha_{k,n+i}^{n+j}]+\alpha^k_{ij}\\
        &=-\frac{1}{2}C^k_{ij}-\frac{1}{4}C^j_{ik}+\frac{1}{4}C^i_{jk}+\frac{1}{2}C^k_{n+i,n+j}+\frac{1}{4}C^i_{n+j,n+k}+\frac{1}{4}C^j_{n+k,n+i}+\frac{1}{4}C^{n+i}_{j,n+k}\\
        &+\frac{1}{4}C^{n+j}_{n+k,i}+\frac{1}{4}C^{n+i}_{k,n+j}-\frac{1}{4}C^{n+j}_{k,n+i}+\alpha^k_{ij}+\frac{1}{2}\alpha^i_{jk}+\frac{1}{2}\alpha^j_{ki}-\frac{1}{2}\alpha^{n+i}_{n+j,k}-\frac{1}{2}\alpha^{n+j}_{k,n+i}
    \end{align*}
    (\ref{eq:LeftInvariantHermitianTorsion2}) is computed the same way. (\ref{eq:LeftInvariantHermitianTorsion3}) can be obtained from Lemma \ref{lemma:TorsionPattern} via
    $$
        \mg{T^\alpha(e_i,e_{n+j}),e_k}=-\mg{T^\alpha(e_i,e_j),e_{n+k}}-\frac{1}{2}N^{n+k}_{ij}+\alpha_{ij}^{n+k}+\alpha_{i,n+j}^k
    $$
    (\ref{eq:LeftInvariantHermitianTorsion4}) is given by
    $$
        \mg{T^\alpha(e_i,e_{n+j}),e_{n+k}}=\mg{T^\alpha(e_i,e_j),e_k}+\frac{1}{2}N_{ij}^k-\alpha_{ij}^k+\alpha_{i,n+j}^{n+k}
    $$
    To compute (\ref{eq:LeftInvariantHermitianTorsion5}) and (\ref{eq:LeftInvariantHermitianTorsion6}), we apply Lemma \ref{lemma:TorsionPattern} using (\ref{eq:LeftInvariantHermitianTorsion4}) and (\ref{eq:LeftInvariantHermitianTorsion3}) respectively:
    $$
        \mg{T^\alpha(e_{n+i},e_{n+j}),e_k}=\mg{T^\alpha(e_j,e_{n+i}),e_{n+k}}+\frac{1}{2}N^{n+k}_{j,n+i}+\alpha^{n+k}_{n+i,j}+\alpha^k_{n+i,n+j}
    $$
    $$
        \mg{T^\alpha(e_{n+i},e_{n+j}),e_{n+k}}=-\mg{T^\alpha(e_j,e_{n+i}),e_k}-\frac{1}{2}N^k_{j,n+i}-\alpha^k_{n+i,j}+\alpha^{n+k}_{n+i,n+j}
    $$
\end{proof}

From Section \ref{section:GauduchonConnection}, the Gauduchon connections are parameterized by $t\in \mathbb{R}$ and the elements $\alpha^t\in\wedge^{(1,1)}\mathfrak{g}^\ast \otimes \mathfrak{g}$ which give rise to the Gauduchon connections are given by
\begin{equation}
    \label{eq:GauduchonAlphaT}
    \mg{\alpha^t(X,Y),Z}=\frac{t}{4}(d\omega)^+(JX,JY,JZ)+\frac{t}{4}(d\omega)^+(X,Y,JZ).
\end{equation}
Let $\nabla^t$ denote the Gauduchon connection associated to $t\in \mathbb{R}$; the associated torsion $T^t$ is given by
\begin{equation}
    \label{eq:t-TorsionGauduchon}
    \mg{T^t(X,Y),Z}=-\frac{1}{4}\mg{N(X,Y),Z}-\left(\frac{t-2}{4}\right)(d\omega)^+(X,Y,JZ)+\left(\frac{3t-2}{4}\right)(d\omega)^+(JX,JY,JZ)
\end{equation}
Applying Lemma \ref{lemma:domega+} and Lemma \ref{lemma:NijenhuisLeftInvariant} to (\ref{eq:t-TorsionGauduchon}) yields the components of the torsion $T^t$:
\begin{align}
    \nonumber
    &\mg{T^t(e_i,e_j),e_k}=\frac{t}{4}C^{n+k}_{n+i,j}+\frac{t}{4}C^{n+k}_{i,n+j}-\frac{1}{2}C^k_{ij}+\frac{-t+1}{2}C^k_{n+i,n+j}+\frac{1}{4}C^{n+i}_{j,n+k}+\frac{1}{4}C^{n+j}_{n+k,i}\\
    \label{eq:Tti,j,k}
    &+\frac{-2t+1}{4}C^i_{n+j,n+k}+\frac{-2t+1}{4}C^j_{n+k,n+i}+\frac{-t+1}{4}C^i_{jk}+\frac{t-1}{4}C^{n+j}_{k,n+i}+\frac{t-1}{4}C^j_{ik}+\frac{-t+1}{4}C^{n+i}_{k,n+j}
\end{align}
$~$
\begin{align}
    \nonumber
    &\mg{T^t(e_i,e_j),e_{n+k}}=\frac{-2t+1}{4}C^j_{n+i,k}+\frac{2t-1}{4}C^i_{n+j,k} + \frac{-t+1}{2}C^{n+k}_{n+i,n+j}+\frac{-t+1}{4}C^{n+j}_{n+i,n+k}+\frac{t-1}{4}C^{n+i}_{n+j,n+k}\\
    \label{eq:Tti,j,n+k}
    &-\frac{t}{4}C^k_{i,n+j} +\frac{t-1}{4}C^j_{i,n+k} + \frac{-t+1}{4}C^i_{j,n+k} -\frac{t}{4}C^k_{n+i,j}-\frac{1}{2}C^{n+k}_{ij}+\frac{1}{4}C^{n+j}_{ik}-\frac{1}{4}C^{n+i}_{jk} 
\end{align}
$~$
\begin{align}
    \nonumber
    &\mg{T^t(e_i,e_{n+j}),e_k}=\frac{-t+1}{4}C^i_{n+j,k} +\frac{t}{4}C^{n+k}_{n+i,n+j}+\frac{2t-1}{4}C^{n+j}_{n+i,n+k}-\frac{t}{4}C^{n+k}_{ij}+\frac{t-1}{4}C^{n+j}_{ik}+\frac{-t+1}{4}C^{n+i}_{jk}\\
    \label{eq:Tti,n+j,k}
    &+\frac{2t-1}{4}C^i_{j,n+k}+\frac{t-1}{2}C^k_{n+i,j}+\frac{t-1}{4}C^j_{n+i,k}+\frac{1}{4}C^{n+i}_{n+j,n+k}-\frac{1}{2}C^k_{i,n+j}+\frac{1}{4}C^j_{i,n+k}
\end{align}
$~$
\begin{align}
    \nonumber
    &\mg{T^t(e_i,e_{n+j}),e_{n+k}}=-\frac{t}{4}C^k_{n+i,n+j} + \frac{t-1}{4}C^j_{n+i,n+k} + \frac{-t+1}{4}C^i_{n+j,n+k} +\frac{t}{4}C^k_{ij} +\frac{-2t+1}{4}C^i_{jk} \\ 
    \label{eq:Tti,n+j,n+k}
    &+ \frac{t-1}{4}C^{n+j}_{i,n+k}+\frac{-t+1}{4}C^{n+i}_{j,n+k} +\frac{t-1}{2}C^{n+k}_{n+i,j}+\frac{-2t+1}{4}C^{n+j}_{n+i,k}-\frac{1}{2}C^{n+k}_{i,n+j}-\frac{1}{4}C^j_{ik}-\frac{1}{4}C^{n+i}_{n+j,k} 
\end{align}
$~$
\begin{align}
    \nonumber
    &\mg{T^t(e_{n+i},e_{n+j}),e_k}=\frac{-t+1}{4}C^{n+i}_{n+j,k} + \frac{-t+1}{2}C^k_{ij} + \frac{-t+1}{4}C^j_{ik} +\frac{t-1}{4}C^i_{jk} -\frac{t}{4}C^{n+k}_{i,n+j} + \frac{-2t+1}{4}C^{n+j}_{i,n+k}\\
    \label{eq:Ttn+i,n+j,k}
    &+\frac{2t-1}{4}C^{n+i}_{j,n+k}-\frac{t}{4}C^{n+k}_{n+i,j} +\frac{t-1}{4}C^{n+j}_{n+i,k} -\frac{1}{2}C^k_{n+i,n+j}+\frac{1}{4}C^j_{n+i,n+k}-\frac{1}{4}C^i_{n+j,n+k}
\end{align}
$~$
\begin{align}
    \nonumber
    &\mg{T^t(e_{n+i},e_{n+j}),e_{n+k}}=\frac{t-1}{4}C^{n+j}_{n+i,n+k}+\frac{-t+1}{4}C^{n+i}_{n+j,n+k} +\frac{-t+1}{2}C^{n+k}_{ij}+\frac{2t-1}{4}C^{n+j}_{ik}+\frac{t}{4}C^k_{i,n+j}\\
    \label{eq:Ttn+i,n+j,n+k}
    &+\frac{-t+1}{4}C^j_{i,n+k}+\frac{-2t+1}{4}C^{n+i}_{jk}+\frac{t-1}{4}C^i_{j,n+k}+\frac{t}{4}C^k_{n+i,j}+\frac{1}{4}C^i_{n+j,k}-\frac{1}{2}C^{n+k}_{n+i,n+j}-\frac{1}{4}C^j_{n+i,k}
\end{align}
$~$\\
\noindent We conclude this section with the following examples.
\begin{example}
    \label{ex:HermitianExample-dim4}
    Consider the Lie group $G$ with left-invariant frame $e_1,e_2,e_3,e_4$ whose non-zero bracket relations are
    $$
        [e_1,e_2]=e_2,\hspace*{0.1in}[e_1,e_3]=e_2+e_3,\hspace*{0.1in}[e_1,e_4]=e_3+e_4.
    $$
    The above 4-dimensional Lie algebra (and others) is studied in \cite{ABDO2005}. Equip $G$ with a left-invariant Hermitian structure $(J,\mg{\cdot,\cdot})$ so that $e_1,e_2,e_3,e_4$ is a standard frame with respect to $(J,\mg{\cdot,\cdot})$.  Let $\alpha\in \wedge^{(1,1)}\mathfrak{g}^\ast\otimes \mathfrak{g}$ be the left-invariant 2-form of type (1,1) with values in $\mathfrak{g}$ whose nonzero components $\alpha_{ab}^c=\mg{\alpha(e_a,e_b),e_c}$ are given by
    $$
        \alpha^1_{12}=\alpha^1_{34}=1,\hspace*{0.1in}\alpha_{23}^4=-5,\hspace*{0.1in}\alpha^4_{41}=5
    $$
    where $\alpha^c_{ab}=-\alpha^c_{ba}$.  Note that the components of $\alpha$ satisfy the conditions of Lemma \ref{lemma:alphaComponents}.  From Proposition \ref{prop:LeftInvariantHermitianTorsion}, the nonzero components of $T^\alpha$ are 
    \begin{align*}
        \mg{T^\alpha(e_1,e_2),e_1}=\frac{3}{4},\hspace*{0.1in}\mg{T^\alpha(e_1,e_4),e_1}=\frac{1}{2},\hspace*{0.1in}\mg{T^\alpha(e_2,e_3),e_1}=-\frac{1}{2},\hspace*{0.1in}\mg{T^\alpha(e_3,e_4),e_1}=\frac{5}{4}
    \end{align*}
    \begin{align*}
        \mg{T^\alpha(e_1,e_2),e_2}=\frac{3}{2},\hspace*{0.2in} \mg{T^\alpha(e_3,e_4),e_2}=-\frac{3}{2}
    \end{align*}
    \begin{align*}
        \mg{T^\alpha(e_1,e_2),e_3}=-\frac{1}{2},\hspace*{0.1in}\mg{T^\alpha(e_1,e_4),e_3}=-\frac{3}{4},\hspace*{0.1in}\mg{T^\alpha(e_2,e_3),e_3}=\frac{3}{4},\hspace*{0.1in}\mg{T^\alpha(e_3,e_4),e_3}=\frac{1}{2}
    \end{align*}
    \begin{align*}
        \mg{T^\alpha(e_1,e_4),e_4}=-3.5,\hspace*{0.2in}\mg{T^\alpha(e_2,e_3),e_4}=-6.5
    \end{align*}
    The components $\Gamma^c_{ab}$ of the associated left-invariant Hermitian connection $\nabla^\alpha$ (which are defined by $\nabla^\alpha_{e_a}e_b=\sum_c \Gamma^c_{ab}e_c$) are computed via Lemma \ref{lemma:nablaEi}:
    \begin{align*}
        &\Gamma^1_{12}=\frac{3}{4},\hspace*{0.1in} \Gamma^1_{14}=\frac{1}{2},\hspace*{0.1in}\Gamma^1_{22}=\frac{5}{2},\hspace*{0.1in}\Gamma^1_{32}=\frac{1}{2},\hspace*{0.1in}\Gamma^1_{33}=1,\hspace*{0.1in}\Gamma^1_{34}=\frac{3}{4},\hspace*{0.1in}\Gamma^1_{43}=-\frac{1}{2},\hspace*{0.1in}\Gamma^1_{44}=-\frac{5}{2}
    \end{align*}
    \begin{align*}
        \Gamma^2_{11}=-\frac{3}{4},\hspace*{0.1in}\Gamma^2_{13}=\frac{1}{2},\hspace*{0.1in}\Gamma^2_{21}=-\frac{5}{2},\hspace*{0.1in}\Gamma^2_{31}=-\frac{1}{2},\hspace*{0.1in}\Gamma^2_{33}=\frac{3}{4},\hspace*{0.1in}\Gamma^2_{34}=-4,\hspace*{0.1in}\Gamma^2_{43}=-\frac{5}{2}
    \end{align*}
    \begin{align*}
        \Gamma^3_{12}=-\frac{1}{2},\hspace*{0.1in}\Gamma^3_{14}=\frac{3}{4},\hspace*{0.1in}\Gamma^3_{24}=\frac{5}{2}\hspace*{0.1in}\Gamma^3_{31}=-1,\hspace*{0.1in}\Gamma^3_{32}=-\frac{3}{4},\hspace*{0.1in}\Gamma^3_{34}=\frac{1}{2},\hspace*{0.1in}\Gamma^3_{41}=\frac{1}{2},\hspace*{0.1in}\Gamma^3_{42}=\frac{5}{2}
    \end{align*}
    \begin{align*}
        \Gamma^4_{11}=-\frac{1}{2},\hspace*{0.1in}\Gamma^4_{13}=-\frac{3}{4},\hspace*{0.1in}\Gamma^4_{23}=-\frac{5}{2},\hspace*{0.1in}\Gamma^4_{31}=-\frac{3}{4},\hspace*{0.1in}\Gamma^4_{32}=4,\hspace*{0.1in}\Gamma^4_{33}=-\frac{1}{2},\hspace*{0.1in}\Gamma^4_{41}=\frac{5}{2}
    \end{align*}
    $\nabla^\alpha J=0$ if and only if $(\nabla^\alpha_{e_a} J)e_b=0$ for all $1\le a,b\le 4$ and the latter is easily found to be equivalent to the following for $1\le a\le 2n$ and $1\le j,k\le n$ (where $n=2$ here):
    \begin{align}
        \label{eq:NablaJ0}
        &\Gamma^k_{a,n+j}+\Gamma^{n+k}_{aj}=0,\hspace*{0.2in}\Gamma^{n+k}_{a,n+j}-\Gamma^k_{aj}=0
    \end{align}
    The condition $\nabla g=0$ is easily found to be equivalent to the following condition for all $1\le a,b,c\le 2n$:
    \begin{equation}
        \label{eq:Nablag0}
        \Gamma^c_{ab}+\Gamma^b_{ac}=0.
    \end{equation}
    One can verify by inspection that the values calculated above for $\Gamma^c_{ab}$ satisfy the above conditions. 
\end{example}
\begin{example}
    \label{ex:GauduchonExample-dim4}
    Let $G$ and $e_1,e_2,e_3,e_4$ be as in Example \ref{ex:HermitianExample-dim4} and let $(J,\mg{\cdot,\cdot})$ be the almost Hermitian structure on $G$ so that $e_1,e_2,e_3,e_4$ is a standard frame.  The nonzero torsion components $T^t$ for $t=2$ are computed from (\ref{eq:Tti,j,k})-(\ref{eq:Ttn+i,n+j,n+k}):
    $$
    \mg{T^2(e_1,e_2),e_1}=\frac{1}{4},~\mg{T^2(e_2,e_3),e_1}=1,~\mg{T^2(e_3,e_4),e_1}=-\frac{1}{4}
    $$
    $$
    \mg{T^2(e_1,e_3),e_2}=-1,~\mg{T^2(e_3,e_4),e_2}=-2
    $$
    $$
    \mg{T^2(e_1,e_2),e_3}=1,~\mg{T^2(e_1,e_4),e_3}=-\frac{1}{4},~\mg{T^2(e_2,e_3),e_3}=\frac{1}{4},~\mg{T^2(e_2,e_4),e_3}=2
    $$
    $$
        \mg{T^2(e_2,e_3),e_4}=-2
    $$
    Let $\nabla^2$ denote the Gauduchon connection with torsion $T^2$.  The components $\Gamma^c_{ab}$ of $\nabla^2$ (where $\nabla^2_{e_a}e_b=\sum_c \Gamma_{ab}^ce_c$) are given by 
    $$
        \Gamma^1_{12}=\frac{1}{4},~\Gamma^1_{22}=1,~\Gamma^1_{23}=1,~\Gamma^1_{33}=1,~\Gamma^1_{34}=\frac{1}{4},~\Gamma^1_{43}=\frac{1}{2},~\Gamma^1_{44}=1
    $$
    $$
    \Gamma^2_{11}=-\frac{1}{4},~\Gamma^2_{21}=-1,~\Gamma^2_{33}=\frac{1}{4},~\Gamma^2_{34}=-1,~\Gamma^2_{43}=1
    $$
    $$
    \Gamma^3_{14}=\frac{1}{4},~\Gamma^3_{21}=-1,~\Gamma^3_{24}=1,\Gamma^3_{31}=-1,~\Gamma^3_{32}=-\frac{1}{4},~\Gamma^3_{41}=-\frac{1}{2},~\Gamma^3_{42}=-1
    $$
    $$
    \Gamma^4_{13}=-\frac{1}{4},~\Gamma^4_{23}=-1,~\Gamma^4_{31}=-\frac{1}{4},~\Gamma^4_{32}=1,~\Gamma^4_{41}=-1
    $$
    The conditions $\nabla J=0$ and $\nabla g=0$ expressed in terms of the $\Gamma^c_{ab}$'s are given by equations (\ref{eq:NablaJ0}) and (\ref{eq:Nablag0}) respectively.  One easily verifies that the computed $\Gamma^c_{ab}$'s satisfy equations (\ref{eq:NablaJ0}) and (\ref{eq:Nablag0}).
\end{example}

\section{Curvature on Totally Real Almost Hermitian Structures}
In this section, we take $G=H\times A$ where $H$ is an arbitrary $n$-dimensional Lie group, $A$ is any $n$-dimensional abelian Lie group and $G$ is equipped with the product Lie group structure.  Hence, we have the natural identification $\mathfrak{g}\simeq \mathfrak{h}\oplus \mathfrak{a}$, where $\mathfrak{g}:=\mbox{Lie}(G)$, $\mathfrak{h}:=\mbox{Lie}(H)$, and $\mathfrak{a}:=\mbox{Lie}(A)$.  Hence, $\mathfrak{a}$ is an abelian ideal of $\mathfrak{g}$. We equip $G$ with the left-invariant almost Hermitian structure $(J,\mg{\cdot,\cdot})$ so that $G$ has a standard frame of the form 
$$
e_1,\dots, e_n,~e_{n+1},\dots, e_{2n}
$$
where $e_1,\dots, e_n$ is a basis of $\mathfrak{h}$ and $e_{n+1},\dots, e_{2n}$ is a basis of $\mathfrak{a}$.  (For notational convenience, we write $(e_i,0)\in \mathfrak{g}$ as $e_i$ and $(0,e_{n+i})\in \mathfrak{g}$ as $e_{n+i}$.) In particular, $J\mathfrak{h}=\mathfrak{a}$.  Motivated by the terminology of \cite{CPO2010,CPO2011, CO2015}, we say that $J$ is a \textit{totally real almost complex structure} with respect to $\mathfrak{h}$.  \noindent Since $\mathfrak{g}=\mathfrak{h}\oplus \mathfrak{a}$ and $\mathfrak{a}$ is abelian, we have
\begin{equation}
\label{eq:SimplifiedStructureConstants}
C^{n+k}_{ij}=C^{c}_{i,n+j}=C^c_{n+i,n+j}=0
\end{equation}
for $1\le i,j,k\le n$ and $1\le c\le 2n$.  Lemma \ref{lemma:NijenhuisLeftInvariant}, Lemma \ref{lemma:domega}, and (\ref{eq:SimplifiedStructureConstants}) imply the following:
\begin{corollary}
    \label{eq:SimplifiedNijhenuis}
    The Nijhenuis tensor associated with $J$ has components 
    $$
        N^k_{ij}=C^k_{ij},\hspace*{0.2in}N^k_{n+i,n+j}=N^{n+k}_{i,n+j}=-C^k_{ij},\hspace*{0.2in}N^k_{i,n+j}=N^{n+k}_{n+i,n+j}=N^{n+k}_{ij}=0
    $$
    for $1\le i,j,k\le n$ where $N^c_{ab}:=\mg{N(e_a,e_b),e_c}$.  In particular, $J$ is integrable if and only if $H$ is abelian.  In addition, the fundamental 2-form $\omega(\cdot,\cdot):=\mg{J\cdot,\cdot}$ is closed if and only if $H$ is abelian.
\end{corollary}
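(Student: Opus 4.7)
The plan is to obtain all the stated formulas by direct substitution of the vanishing structure constants (\ref{eq:SimplifiedStructureConstants}) into the general formulas of Lemma \ref{lemma:NijenhuisLeftInvariant} and Lemma \ref{lemma:domega}, and then read off the two ``in particular'' statements as easy corollaries. The key structural observation to record up front is that $\mathfrak{g} = \mathfrak{h}\oplus \mathfrak{a}$ together with $\mathfrak{a}$ being an abelian ideal and $[\mathfrak{h},\mathfrak{a}] = 0$ implies that the only potentially nonzero structure constant among all $C^c_{ab}$ (for $1 \le a,b,c \le 2n$) is $C^k_{ij}$ with $1 \le i,j,k \le n$; in particular, $C^{n+k}_{n+i,j} = -C^{n+k}_{j,n+i} = 0$ follows from (\ref{eq:SimplifiedStructureConstants}) by skew-symmetry.

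First I would handle the Nijenhuis components. In the expression
$$N^k_{ij} \;=\; -C^{n+k}_{n+i,j} - C^{n+k}_{i,n+j} + C^k_{ij} - C^k_{n+i,n+j}$$
from (\ref{eq:NijenhuisLeftInvariant1}), the first and second terms vanish by the above observation, and the fourth vanishes because $\mathfrak{a}$ is abelian, leaving $N^k_{ij} = C^k_{ij}$. The same reasoning applied to the companion formula in (\ref{eq:NijenhuisLeftInvariant1}) shows that all four terms of $N^{n+k}_{ij}$ vanish, giving $N^{n+k}_{ij} = 0$. The remaining four components listed in the corollary follow immediately from the $J$-symmetries (\ref{eq:NijenhuisLeftInvariant2}).

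The integrability claim is then a direct consequence of the last sentence of Lemma \ref{lemma:NijenhuisLeftInvariant}: $N=0$ iff $N^k_{ij} = N^{n+k}_{ij}=0$ for $1 \le i,j,k \le n$. Since $N^{n+k}_{ij}=0$ automatically and $N^k_{ij} = C^k_{ij}$, this is equivalent to the vanishing of the structure constants of $\mathfrak{h}$, that is, to $H$ being abelian.

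Finally, for $d\omega$ I would substitute (\ref{eq:SimplifiedStructureConstants}) into the four identities of Lemma \ref{lemma:domega}. Three of them collapse to $0$ identically, and the fourth reduces to $d\omega(e_i, e_j, e_{n+k}) = -C^k_{ij}$. Therefore $d\omega=0$ iff $C^k_{ij}=0$ for all $1 \le i,j,k \le n$, again equivalent to $H$ being abelian. No step is technically difficult; the only thing demanding care is bookkeeping which indices produce brackets of elements of the same summand versus brackets across $\mathfrak{h}$ and $\mathfrak{a}$, so that the correct instances of (\ref{eq:SimplifiedStructureConstants}) (possibly after a skew-symmetry swap) can be invoked.
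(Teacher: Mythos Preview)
Your proposal is correct and follows precisely the approach indicated in the paper, which simply cites Lemma \ref{lemma:NijenhuisLeftInvariant}, Lemma \ref{lemma:domega}, and (\ref{eq:SimplifiedStructureConstants}); you have accurately filled in the routine substitutions and symmetry arguments that those references encode.
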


As a consequence of (\ref{eq:SimplifiedStructureConstants}), the torsion formulas (\ref{eq:LeftInvariantHermitianTorsion1})-(\ref{eq:LeftInvariantHermitianTorsion6}) and (\ref{eq:Tti,j,k})-(\ref{eq:Ttn+i,n+j,n+k}) reduce respectively to the following:
\begin{align}
        \label{eq:HA_LeftInvariantHermitianTorsion1}
        \mg{T^\alpha(e_i,e_j),e_k}&=-\frac{1}{2}C^k_{ij}-\frac{1}{4}C^j_{ik}+\frac{1}{4}C^i_{jk}+\alpha^k_{ij}+\frac{1}{2}\alpha^i_{jk}+\frac{1}{2}\alpha^j_{ki}-\frac{1}{2}\alpha^{n+i}_{n+j,k}-\frac{1}{2}\alpha^{n+j}_{k,n+i}\\
        \label{eq:HA_LeftInvariantHermitianTorsion2}
        \mg{T^\alpha(e_i,e_j),e_{n+k}}&=\alpha^{n+k}_{ij} + \frac{1}{2}\alpha^i_{j,n+k} + \frac{1}{2}\alpha^j_{n+k,i} - \frac{1}{2}\alpha^{n+i}_{n+j,n+k} - \frac{1}{2}\alpha^{n+j}_{n+k,n+i}\\ 
        \label{eq:HA_LeftInvariantHermitianTorsion3}
        \mg{T^\alpha(e_i,e_{n+j}),e_k}&= \alpha^k_{i,n+j} +\frac{1}{2}\alpha^i_{n+j,k} + \frac{1}{2}\alpha^{n+j}_{ki} + \frac{1}{2}\alpha^{n+i}_{jk} + \frac{1}{2}\alpha^j_{k,n+i}\\ 
        \label{eq:HA_LeftInvariantHermitianTorsion4}
        \mg{T^\alpha(e_i,e_{n+j}),e_{n+k}}&=\frac{1}{4}C^i_{jk}  +\frac{1}{4}C^j_{ki} +\alpha^{n+k}_{i,n+j} +\frac{1}{2}\alpha^i_{n+j,n+k} + \frac{1}{2}\alpha^{n+j}_{n+k,i} +\frac{1}{2}\alpha^{n+i}_{j,n+k} +\frac{1}{2}\alpha^j_{n+k,n+i}\\
        \label{eq:HA_LeftInvariantHermitianTorsion5}
        \mg{T^\alpha(e_{n+i},e_{n+j}),e_k}&= \frac{1}{2} C_{ij}^k +\frac{1}{4} C_{ik}^{j}+\frac{1}{4}C_{kj}^i  + \alpha_{n+i, n+j}^k+\frac{1}{2}\alpha^j_{n+i,n+k}+\frac{1}{2}\alpha^{n+i}_{n+k,j}+\frac{1}{2}\alpha^{n+j}_{i,n+k}\\
        \nonumber
        &+\frac{1}{2}\alpha^i_{n+k,n+j}\\
        \label{eq:HA_LeftInvariantHermitianTorsion6}
        \mg{T^\alpha(e_{n+i},e_{n+j}),e_{n+k}}&= \alpha_{n+i, n+j}^{n+k} + \frac{1}{2}\alpha_{n+j, n+k}^{n+i} +\frac{1}{2}\alpha_{n+k, n+i}^{n+j} - \frac{1}{2}\alpha_{j, n+k}^i -\frac{1}{2}\alpha_{n+k, i}^j
    \end{align}
    and
    \begin{align}
    \label{eq:HA_Tti,j,k}
    \mg{T^t(e_i,e_j),e_k}&=-\frac{1}{2}C^k_{ij}+\frac{-t+1}{4}C^i_{jk}+\frac{t-1}{4}C^j_{ik}\\
    \label{eq:HA_Tti,j,n+k}
    \mg{T^t(e_i,e_j),e_{n+k}}&=0\\ 
    \label{eq:HA_Tti,n+j,k}
    \mg{T^t(e_i,e_{n+j}),e_k}&=0\\
    \label{eq:HA_Tti,n+j,n+k}
    \mg{T^t(e_i,e_{n+j}),e_{n+k}}&=\frac{t}{4}C^k_{ij} +\frac{-2t+1}{4}C^i_{jk}-\frac{1}{4}C^j_{ik}\\
    \label{eq:HA_Ttn+i,n+j,k}
    \mg{T^t(e_{n+i},e_{n+j}),e_k}&=\frac{-t+1}{2}C^k_{ij} + \frac{-t+1}{4}C^j_{ik} +\frac{t-1}{4}C^i_{jk}\\
    \label{eq:HA_Ttn+i,n+j,n+k}
    \mg{T^t(e_{n+i},e_{n+j}),e_{n+k}}&=0
\end{align}
\noindent Let $\nabla^\alpha$ and $\nabla^t$ be the Hermitian and Gauduchon connection on $(G,J,\mg{\cdot,\cdot})$ whose torsion is $T^\alpha$ and $T^t$ respectively.  For completeness (and later use), we record some formulas related to the curvature of a left-invariant connection.  Recall that for an arbitrary connection $\nabla$ on $G$ the associated curvature tensor is defined by
$$
R(X,Y)Z:=[\nabla_X,\nabla_Y]Z-\nabla_{[X,Y]}Z
$$
where $X,Y,Z$ are any vector fields on $G$.  For convenience, we define
$$
R(X,Y,Z,W):=\mg{R(X,Y)Z,W}.
$$
As usual, the components of $\nabla$ are defined with respect to the standard frame $e_1,\dots, e_{2n}$ via 
$$
\Gamma^c_{ab}:=\mg{\nabla_{e_a}e_b,e_c}.
$$
In the case of a left-invariant connection one has the following:
\begin{proposition}
    \label{prop:CurvatureLeftInvariant}
    Let $\nabla$ be a left-invariant connection on $(G,J,\mg{\cdot,\cdot})$.  Then 
    \begin{equation}
        \label{eq:CurvatureLeftInvariant1}
        R(e_a,e_b,e_c,e_d)=\sum_p\left(\Gamma_{ap}^d\Gamma^p_{bc}-\Gamma_{bp}^d\Gamma_{ac}^p-C^p_{ab}\Gamma_{pc}^d\right)
    \end{equation}
\end{proposition}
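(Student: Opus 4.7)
The plan is to expand the definition of curvature directly on the standard frame, exploiting the fact that left-invariance makes the connection coefficients $\Gamma^c_{ab}$ constant functions on $G$. This reduces the calculation to pure algebra, with no differentiation of coefficients.

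First, I would unpack $\nabla_{e_b} e_c = \sum_p \Gamma^p_{bc}\, e_p$ and apply $\nabla_{e_a}$. Because each $\Gamma^p_{bc}$ is constant (this is the key consequence of left-invariance: $\nabla_{e_a} e_b \in \mathfrak{g}$, together with the fact that $\mg{\cdot,\cdot}$ is left-invariant so $\Gamma^c_{ab}=\mg{\nabla_{e_a}e_b,e_c}$ is constant), there are no derivative-of-coefficient terms, and the Leibniz rule collapses to
\[
\nabla_{e_a}\nabla_{e_b} e_c = \sum_{p,q} \Gamma^p_{bc}\,\Gamma^q_{ap}\, e_q.
\]
The same maneuver gives $\nabla_{e_b}\nabla_{e_a} e_c = \sum_{p,q} \Gamma^p_{ac}\,\Gamma^q_{bp}\, e_q$.

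Next, I would handle the bracket term. Using (\ref{eq:StructureConstants}), $[e_a,e_b]=\sum_p C^p_{ab} e_p$ with constant $C^p_{ab}$, and $C^\infty(G)$-linearity of $\nabla$ in its first slot yields
\[
\nabla_{[e_a,e_b]}e_c = \sum_{p,q} C^p_{ab}\,\Gamma^q_{pc}\, e_q.
\]
Combining the three pieces via $R(e_a,e_b)e_c = \nabla_{e_a}\nabla_{e_b}e_c - \nabla_{e_b}\nabla_{e_a}e_c - \nabla_{[e_a,e_b]}e_c$, and then taking the inner product with $e_d$, the orthonormality of the standard frame (which selects the $q=d$ component) produces
\[
R(e_a,e_b,e_c,e_d) = \sum_{p}\bigl(\Gamma^p_{bc}\Gamma^d_{ap} - \Gamma^p_{ac}\Gamma^d_{bp} - C^p_{ab}\Gamma^d_{pc}\bigr),
\]
which is exactly (\ref{eq:CurvatureLeftInvariant1}) after reindexing.

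There is no real obstacle here; the only subtlety worth flagging is the justification that the $\Gamma^c_{ab}$ are constants (so that $\nabla_{e_a}(\Gamma^p_{bc} e_p) = \Gamma^p_{bc}\nabla_{e_a} e_p$ with no $e_a(\Gamma^p_{bc})$ term). This follows directly from left-invariance of $\nabla$ and of $\mg{\cdot,\cdot}$, as already recorded implicitly in Corollary \ref{cor:TorsionLeftInvariant} and Lemma \ref{lemma:nablaEi}. Everything else is bookkeeping.
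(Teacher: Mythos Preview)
Your proposal is correct and follows essentially the same approach as the paper's proof: both expand $R(e_a,e_b)e_c$ directly from the definition, use the constancy of the $\Gamma^p_{bc}$ to kill the derivative terms, and read off the $e_d$-component. The paper is terser and does not pause to justify why the $\Gamma^c_{ab}$ are constants, whereas you make that point explicit, but the computations are identical.
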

\begin{proof}
    \begin{align*}
        R(e_a,e_b)e_c&=\nabla_{e_a}\nabla_{e_b}e_c-\nabla_{e_b}\nabla_{e_a}e_c-\nabla_{[e_a,e_b]}e_c\\
        &=\sum_p \left(\Gamma^p_{bc}\nabla_{e_a}e_p-\Gamma_{ac}^p\nabla_{e_b}e_p-C^p_{ab}\nabla_{e_p}e_c\right)\\
        &=\sum_q\sum_p\left(\Gamma_{ap}^q\Gamma^p_{bc}-\Gamma_{bp}^q\Gamma_{ac}^p-C^p_{ab}\Gamma_{pc}^q \right)e_q
    \end{align*}
    which implies (\ref{eq:CurvatureLeftInvariant1}).
\end{proof}
\begin{corollary}
    \label{cor:HermitianCurvature}
    Let $\nabla$ be a metric compatible left-invariant connection on $(G,J,\mg{\cdot,\cdot})$ with torsion $T$.  Then
    \begin{align}
        \nonumber
        R(e_a,e_b,e_c,e_d)&=\sum_p\left(\widehat{C}_{ap}^d\widehat{C}^p_{bc}-\widehat{C}_{bp}^d\widehat{C}_{ac}^p-C^p_{ab}\widehat{C}_{pc}^d\right)+\sum_p\left(\widehat{T}_{ap}^d\widehat{T}^p_{bc}-\widehat{T}_{bp}^d\widehat{T}_{ac}^p\right)  \\
        \label{eq:metricCurvatureLeftInvariant1}
        &+\sum_p\left(\widehat{C}_{ap}^d\widehat{T}^p_{bc}+\widehat{C}^p_{bc}\widehat{T}_{ap}^d-\widehat{C}_{bp}^d\widehat{T}_{ac}^p-\widehat{T}_{bp}^d\widehat{C}_{ac}^p-C^p_{ab}\widehat{T}_{pc}^d\right)
    \end{align}
    where $\widehat{C}^c_{ab}:=\frac{1}{2}(C^c_{ab}-C^a_{bc}-C^b_{ac})$, $\widehat{T}^c_{ab}:=\frac{1}{2}(T^c_{ab}-T^a_{bc}-T^b_{ac})$, and $T^c_{ab}:=\mg{T(e_a,e_b),e_c}$.
\end{corollary}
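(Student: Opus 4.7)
The plan is to combine Proposition \ref{prop:CurvatureLeftInvariant} with the Christoffel formula of Lemma \ref{lemma:nablaEi}. Since the standard frame is orthonormal, Lemma \ref{lemma:nablaEi} gives
\[
\Gamma^c_{ab}=\frac{1}{2}(C^c_{ab}-C^a_{bc}-C^b_{ac})+\frac{1}{2}(T^c_{ab}-T^a_{bc}-T^b_{ac})=\widehat{C}^c_{ab}+\widehat{T}^c_{ab},
\]
which is the exact decomposition of the Christoffel symbols into the two pieces appearing in the statement of the corollary. So the first step is to record this identity as a consequence of $\nabla g=0$ together with left-invariance (which makes the torsion a left-invariant element of $\wedge^2\mathfrak{g}^\ast\otimes\mathfrak{g}$).

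Next I would substitute $\Gamma=\widehat{C}+\widehat{T}$ directly into the curvature expression
\[
R(e_a,e_b,e_c,e_d)=\sum_p\bigl(\Gamma^d_{ap}\Gamma^p_{bc}-\Gamma^d_{bp}\Gamma^p_{ac}-C^p_{ab}\Gamma^d_{pc}\bigr)
\]
from Proposition \ref{prop:CurvatureLeftInvariant}, expand each bilinear product as a sum of four terms, and split the last term $C^p_{ab}\Gamma^d_{pc}$ into its $\widehat{C}$ and $\widehat{T}$ contributions. Regrouping the resulting ten summands into three blocks yields (i) a pure $\widehat{C}$-block, consisting of $\widehat{C}^d_{ap}\widehat{C}^p_{bc}-\widehat{C}^d_{bp}\widehat{C}^p_{ac}-C^p_{ab}\widehat{C}^d_{pc}$, which is exactly the first sum in (\ref{eq:metricCurvatureLeftInvariant1}); (ii) a pure $\widehat{T}$-block, consisting only of $\widehat{T}^d_{ap}\widehat{T}^p_{bc}-\widehat{T}^d_{bp}\widehat{T}^p_{ac}$ (no $C^p_{ab}$ cross-term arises here, since the $C^p_{ab}$ in Proposition \ref{prop:CurvatureLeftInvariant} multiplies a \emph{single} $\Gamma$, not a product); and (iii) a mixed block capturing the four $\widehat{C}\widehat{T}$ cross terms together with the residual $-C^p_{ab}\widehat{T}^d_{pc}$, matching the third sum.

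The work is essentially bookkeeping, so the only real obstacle is keeping track of signs and indices: in particular, ensuring that the cross terms $\widehat{C}^d_{ap}\widehat{T}^p_{bc}$ and $\widehat{T}^d_{ap}\widehat{C}^p_{bc}$ both appear with a $+$ sign (and their $(a\leftrightarrow b)$ partners with $-$), and that the solitary $-C^p_{ab}\widehat{T}^d_{pc}$ is placed in the mixed block rather than being accidentally absorbed into the pure $\widehat{T}$-block. A careful side-by-side comparison with (\ref{eq:metricCurvatureLeftInvariant1}) then completes the proof. Note that nothing in this argument uses the $G=H\times A$ structure, so the formula actually holds for any metric compatible left-invariant connection on any Lie group equipped with a left-invariant metric.
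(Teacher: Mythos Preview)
Your proposal is correct and follows essentially the same route as the paper: decompose $\Gamma^c_{ab}=\widehat{C}^c_{ab}+\widehat{T}^c_{ab}$ via Lemma~\ref{lemma:nablaEi}, substitute into Proposition~\ref{prop:CurvatureLeftInvariant}, expand, and regroup. Your closing observation that the argument uses nothing about the product structure $G=H\times A$ is also made in the paper (Remark~\ref{rmk:CurvatureFormulas}).
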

\begin{proof}
    From Lemma \ref{lemma:nablaEi}, $\Gamma^c_{ab}=\widehat{C}^c_{ab}+\widehat{T}^c_{ab}$.  Substituting this into (\ref{eq:CurvatureLeftInvariant1}) gives
    \begin{align*}
        R(e_a,e_b,e_c,e_d)&=\sum_p\left(\Gamma_{ap}^d\Gamma^p_{bc}-\Gamma_{bp}^d\Gamma_{ac}^p-C^p_{ab}\Gamma_{pc}^d\right)\\
        &=\sum_p(\widehat{C}_{ap}^d+\widehat{T}_{ap}^d)(\widehat{C}^p_{bc}+\widehat{T}^p_{bc})-\sum_p(\widehat{C}_{bp}^d+\widehat{T}_{bp}^d)(\widehat{C}_{ac}^p+\widehat{T}_{ac}^p)-\sum_pC^p_{ab}(\widehat{C}_{pc}^d+\widehat{T}_{pc}^d)\\
        &=\sum_p(\widehat{C}_{ap}^d\widehat{C}^p_{bc}+\widehat{C}_{ap}^d\widehat{T}^p_{bc}+\widehat{C}^p_{bc}\widehat{T}_{ap}^d+\widehat{T}_{ap}^d\widehat{T}^p_{bc})-\sum_p(\widehat{C}_{bp}^d\widehat{C}_{ac}^p+\widehat{C}_{bp}^d\widehat{T}_{ac}^p+\widehat{C}_{ac}^p\widehat{T}_{bp}^d+\widehat{T}_{bp}^d\widehat{T}_{ac}^p)\\
        &-\sum_p(C^p_{ab}\widehat{C}_{pc}^d+C^p_{ab}\widehat{T}_{pc}^d)
    \end{align*}
  which is (\ref{eq:metricCurvatureLeftInvariant1}) after rearranging the terms.    
\end{proof}
\begin{remark}
    \label{rmk:CurvatureFormulas}
    Note that the formulas in Proposition \ref{prop:CurvatureLeftInvariant} and Corollary \ref{cor:HermitianCurvature} apply to all Lie groups equipped with a left-invariant almost Hermitian structure (as opposed to only Lie groups of the form $H\times A$).
\end{remark}
\noindent For the remainder of the paper, we let
$$
R_{abcd}:=R(e_a,e_b,e_c,e_d)
$$

Let $\alpha\in \wedge^{(1,1)}\mathfrak{g}^\ast\otimes \mathfrak{g}$ and let $\nabla^\alpha$ be the Hermitian connection on $(G,\mg{\cdot,\cdot},J)$ whose torsion is $T^\alpha$.  For notational convenience, we let $T:=T^\alpha$ and $T^c_{ab}:=\mg{T(e_a,e_b),e_c}$.  The components $T^c_{ab}$ satisfy the following identities:
\begin{corollary}
    \label{cor:TabcHermitian}
    $T\in \wedge^2\mathfrak{g}^\ast\otimes \mathfrak{g}$ is the torsion of a Hermitian connection on $(G,J,\mg{\cdot,\cdot})$ if and only if for $1\le i,j,k\le n$, the components of $T$ satisfy the following identities:
    \begin{align}
        \label{eq:TabcHermitian1}
        -T^{n+k}_{n+i,j}-T^{n+k}_{i,n+j}+T^k_{ij}-T^k_{n+i,n+j}+C^k_{ij}&=0\\
        \label{eq:TabcHermitian2}
        T^{k}_{n+i,j}+T^k_{i,n+j}+T^{n+k}_{ij}-T^{n+k}_{n+i,n+j}&=0
   \end{align}
   and
   \begin{align}
        \label{eq:TabcHermitian3}
        -T^{n+k}_{ij}-T^{n+i}_{jk}-T^{n+j}_{ki}&=0\\
        \label{eq:TabcHermitian4}
        T^k_{ij}-T^{n+i}_{j,n+k}-T^{n+j}_{n+k,i}&=-C^k_{ij}\\
        \label{eq:TabcHermitian5}
        T^k_{i,n+j}-T^{n+i}_{n+j,n+k}+T^j_{n+k,i}&=0\\
        \label{eq:TabcHermitian6}
        T^k_{n+i,n+j}+T^i_{n+j,n+k}+T^j_{n+k,n+i}&=0
    \end{align}
\end{corollary}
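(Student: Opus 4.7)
The strategy is to specialize the general characterization given by Proposition \ref{prop:HermitianTorsionIFFComponents} to the product setting $G=H\times A$, using the vanishing of many structure constants recorded in (\ref{eq:SimplifiedStructureConstants}) together with the explicit formula for the Nijenhuis components given in Corollary \ref{eq:SimplifiedNijhenuis}. Proposition \ref{prop:HermitianTorsionIFFComponents} produces six families of equations (\ref{eq:HermitianTorsionIFFComponents1})--(\ref{eq:HermitianTorsionIFFComponents6}) which are necessary and sufficient for a $g$-compatible $T\in \wedge^2\mathfrak{g}^\ast\otimes \mathfrak{g}$ to be the torsion of a Hermitian connection, so the task reduces to rewriting these six identities under the simplifications available for $G=H\times A$ with $J$ totally real with respect to $\mathfrak{h}$.

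First I would substitute $N^k_{ij}=C^k_{ij}$ and $N^{n+k}_{ij}=0$ (from Corollary \ref{eq:SimplifiedNijhenuis}) into (\ref{eq:HermitianTorsionIFFComponents1}) and (\ref{eq:HermitianTorsionIFFComponents2}). This immediately yields (\ref{eq:TabcHermitian1}) and (\ref{eq:TabcHermitian2}) with no additional work.

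Next I would handle the four $d\omega$-type identities (\ref{eq:HermitianTorsionIFFComponents3})--(\ref{eq:HermitianTorsionIFFComponents6}) by observing that, in the totally real case, every structure constant appearing on their left-hand sides either lies among those explicitly vanishing in (\ref{eq:SimplifiedStructureConstants}) or reduces to one after applying skew-symmetry in the lower indices. For example, in (\ref{eq:HermitianTorsionIFFComponents4}) the terms $C^{n+i}_{j,n+k}$ and $C^{n+j}_{n+k,i}=-C^{n+j}_{i,n+k}$ both vanish by (\ref{eq:SimplifiedStructureConstants}), leaving only the $-C^k_{ij}$ term on the left, which produces (\ref{eq:TabcHermitian4}). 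The same pruning applied to (\ref{eq:HermitianTorsionIFFComponents3}), (\ref{eq:HermitianTorsionIFFComponents5}), and (\ref{eq:HermitianTorsionIFFComponents6}) yields (\ref{eq:TabcHermitian3}), (\ref{eq:TabcHermitian5}), and (\ref{eq:TabcHermitian6}) respectively.

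There is no real obstacle here: the proof is essentially a bookkeeping exercise of substitution and index tracking, and once each of the six general identities has been pruned using (\ref{eq:SimplifiedStructureConstants}) and Corollary \ref{eq:SimplifiedNijhenuis}, the resulting forms coincide verbatim with (\ref{eq:TabcHermitian1})--(\ref{eq:TabcHermitian6}). The only care required is in applying skew-symmetry to normalize structure constants whose second lower index is of the form $n+\ast$ before invoking (\ref{eq:SimplifiedStructureConstants}).
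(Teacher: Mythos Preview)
Your proposal is correct and matches the paper's own proof exactly: the paper simply states that the result follows immediately from Proposition \ref{prop:HermitianTorsionIFFComponents}, Corollary \ref{eq:SimplifiedNijhenuis}, and (\ref{eq:SimplifiedStructureConstants}), which is precisely the substitution-and-pruning argument you describe.
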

\begin{proof}
This follows immediately from Proposition \ref{prop:HermitianTorsionIFFComponents}, Corollary \ref{eq:SimplifiedNijhenuis}, and (\ref{eq:SimplifiedStructureConstants}).
\end{proof}
\noindent The quantities $\widehat{T}^c_{ab}$ feature prominently in the formula of Corollary \ref{cor:HermitianCurvature}. Note that $\widehat{T}^c_{ab}$ (unlike $T^c_{ab}$) is not skew-symmetric in the lower indices. Instead, one easily finds that $\widehat{T}^c_{ab}$ satisfies the following identities:
\begin{equation}
    \label{eq:hatTIdent1}
    \widehat{T}^c_{ab}-\widehat{T}^c_{ba}=T^c_{ab},\hspace*{0.2in}\widehat{T}^c_{ab}-\widehat{T}^a_{cb}=-T^b_{ac},\hspace*{0.1in}\widehat{T}^c_{ab}=-\widehat{T}^b_{ac}
\end{equation}
Of course, the same identities apply to $\widehat{C}^c_{ab}$:
\begin{equation}
    \label{eq:hatCIdent1}
    \widehat{C}^c_{ab}-\widehat{C}^c_{ba}=C^c_{ab},\hspace*{0.2in}\widehat{C}^c_{ab}-\widehat{C}^a_{cb}=-C^b_{ac},\hspace*{0.1in}\widehat{C}^c_{ab}=-\widehat{C}^b_{ac}
\end{equation}
Using (\ref{eq:HA_LeftInvariantHermitianTorsion1})-(\ref{eq:HA_LeftInvariantHermitianTorsion6}), the third identity in (\ref{eq:hatTIdent1}), and the symmetries of $\alpha$, one obtains the following for $1\le i,j,k\le n$:
\begin{align}
    \label{eq:hatTijk}
    \widehat{T}_{ij}^k&=-\frac{1}{4}C^k_{ij}+\frac{1}{4}C^j_{ik}+\frac{1}{2}C^i_{jk}+\frac{1}{2}\alpha^k_{ij}-\frac{1}{2}\alpha^j_{ik}-\frac{1}{2}\alpha^{n+j}_{i,n+k}+\frac{1}{2}\alpha^{n+k}_{i,n+j}\\
    \label{eq:hatTijnk}
    \widehat{T}^{n+k}_{ij}&=\frac{1}{2}\alpha^{n+k}_{ij}-\frac{1}{2}\alpha^j_{i,n+k}+\frac{1}{2}\alpha^{n+j}_{ik}-\frac{1}{2}\alpha^k_{i,n+j}\\
     \label{eq:hatTinjk}
     \widehat{T}_{i,n+j}^k&=\frac{1}{2}\alpha_{i,n+j}^k-\frac{1}{2}\alpha_{ik}^{n+j}+\frac{1}{2}\alpha_{i,n+k}^j-\frac{1}{2}\alpha_{ij}^{n+k}\\
     \label{eq:hatTnijk}
     \widehat{T}^k_{n+i,j}&=-\frac{1}{2}\alpha^k_{i,n+j}+\frac{1}{2}\alpha_{i,n+k}^j-\frac{1}{2}\alpha_{ik}^{n+j}+\frac{1}{2}\alpha_{ij}^{n+k}\\
     \label{eq:hatTinjnk}
     \widehat{T}^{n+k}_{i,n+j}&=-\frac{1}{4}C^j_{ik}+\frac{1}{4}C^k_{ij}+\frac{1}{2}\alpha_{i,n+j}^{n+k}-\frac{1}{2}\alpha_{i,n+k}^{n+j}-\frac{1}{2}\alpha^j_{ik}+\frac{1}{2}\alpha^k_{ij}\\
     \label{eq:hatTnijnk}
     \widehat{T}^{n+k}_{n+i,j}&=-\frac{1}{4}C^j_{ik}-\frac{1}{4}C^k_{ij}-\frac{1}{2}\alpha_{i,n+j}^{n+k}-\frac{1}{2}\alpha_{ik}^j-\frac{1}{2}\alpha^{n+j}_{i,n+k}-\frac{1}{2}\alpha^k_{ij}\\
     \label{eq:hatTninjk}
     \widehat{T}^k_{n+i,n+j}&=\frac{1}{4}C^k_{ij}+\frac{1}{4}C^j_{ik}+\frac{1}{2}\alpha^k_{ij}+\frac{1}{2}\alpha^j_{ik}+\frac{1}{2}\alpha^{n+j}_{i,n+k}+\frac{1}{2}\alpha^{n+k}_{i,n+j}\\
     \label{eq:hatTninjnk}
     \widehat{T}^{n+k}_{n+i,n+j}&=-\frac{1}{2}\alpha^k_{i,n+j}+\frac{1}{2}\alpha^j_{i,n+k}-\frac{1}{2}\alpha^{n+j}_{ik}+\frac{1}{2}\alpha^{n+k}_{ij}
\end{align}
\noindent In the next result, we calculate the element $\alpha\in \wedge^{(1,1)}\mathfrak{g}^\ast\otimes \mathfrak{g}$ which corresponds to the trivial Hermitian connection, that is, $\nabla_XY= 0$ for all $X,Y\in \mathfrak{g}$ (which is clearly flat by Proposition \ref{prop:CurvatureLeftInvariant}).  We will write $\nabla =0$ to denote the trivial Hermitian connection.
\begin{proposition}
    \label{prop:HermitianFlat}
    Let $G=H\times A$ where $H$ is any $n$-dimensional Lie group and $A$ is any $n$-dimensional abelian Lie group.  Let $(J,\mg{\cdot,\cdot})$ be any left-invariant almost Hermitian structure on $G$ such that $(J,\mg{\cdot,\cdot})$ admits a standard frame of the form $e_1,\cdots e_n,~e_{n+1},\dots, e_{2n}$ where $e_1,\dots, e_n$ and $e_{n+1},\dots, e_{2n}$ correspond to frames on $H$ and $A$ respectively.  Then $\nabla^\alpha=0$ is (uniquely) given by the element $\alpha\in \wedge^{(1,1)}\mathfrak{g}^\ast \otimes \mathfrak{g}$ whose components for $1\le i,j,k,\le n$ and $1\le c\le 2n$ are
    $$
        \alpha^k_{ij}=\alpha^k_{n+i,n+j}=-\frac{1}{2}C^k_{ij},\hspace*{0.2in}\alpha^{n+k}_{ij}=\alpha^c_{i,n+j}=\alpha_{n+i,j}^c=0
    $$
    where (again) $C^c_{ab}:=\mg{[e_a,e_b],e_c}$.  
\end{proposition}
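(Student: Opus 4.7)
The overall plan is to invoke the bijection of Corollary \ref{cor:leftinvariantHermitianOneToOne}: left-invariant Hermitian connections on $(G,\mg{\cdot,\cdot},J)$ are in one-to-one correspondence with elements of $\wedge^{(1,1)}\mathfrak{g}^\ast\otimes\mathfrak{g}$. Since the trivial connection $\nabla\equiv 0$ is left-invariant, is $g$-compatible (the metric is left-invariant, so $e_a\mg{e_b,e_c}=0$), and satisfies $\nabla J=0$ trivially, it is a left-invariant Hermitian connection. Therefore there is a unique $\alpha\in \wedge^{(1,1)}\mathfrak{g}^\ast\otimes\mathfrak{g}$ assigned to it by Proposition \ref{prop:HermitianTorsionFormulaReal}. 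The uniqueness part of the statement is then free, and the task reduces to (i) verifying that the candidate $\alpha$ displayed in the proposition actually lies in $\wedge^{(1,1)}\mathfrak{g}^\ast\otimes\mathfrak{g}$, and (ii) checking that the torsion it produces via the reduced formulas (\ref{eq:HA_LeftInvariantHermitianTorsion1})--(\ref{eq:HA_LeftInvariantHermitianTorsion6}) equals the torsion $T^c_{ab}=-C^c_{ab}$ of the trivial connection.

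For (i), I would apply Lemma \ref{lemma:alphaComponents}. Condition (a) requires $\alpha^c_{ij}=\alpha^c_{n+i,n+j}$, which is immediate from the prescription since both equal $-\tfrac{1}{2}C^c_{ij}$ when $c=k\le n$ and both vanish when $c=n+k$ (using $\alpha^{n+k}_{ij}=0$ together with $\alpha^{n+k}_{n+i,n+j}$, which is the same object forced to zero by the (1,1) condition we are verifying). Condition (b) requires $\alpha^c_{i,n+j}=-\alpha^c_{n+i,j}$; both sides are declared zero, so this holds. Hence $\alpha$ is genuinely of type (1,1).

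For (ii), I would substitute the displayed $\alpha$ into each of the six reduced formulas (\ref{eq:HA_LeftInvariantHermitianTorsion1})--(\ref{eq:HA_LeftInvariantHermitianTorsion6}) and compute. Many terms drop out immediately because every mixed-index component and every barred-upper component of $\alpha$ vanishes. The nontrivial case is (\ref{eq:HA_LeftInvariantHermitianTorsion1}): with $\alpha^k_{ij}=-\tfrac12 C^k_{ij}$, $\tfrac12\alpha^i_{jk}=-\tfrac14 C^i_{jk}$, $\tfrac12\alpha^j_{ki}=-\tfrac14 C^j_{ki}$, and the two $\alpha$-terms with mixed indices vanishing, the right-hand side collapses (using $C^j_{ki}=-C^j_{ik}$) to $-C^k_{ij}$, which is exactly $T^k_{ij}$ for the trivial connection. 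Formula (\ref{eq:HA_LeftInvariantHermitianTorsion5}) is analogous and gives $T^k_{n+i,n+j}=-C^k_{n+i,n+j}=0$ correctly, and the remaining four formulas vanish term-by-term, matching $T^{n+k}_{ij}=T^k_{i,n+j}=T^{n+k}_{i,n+j}=T^{n+k}_{n+i,n+j}=0$ as required by (\ref{eq:SimplifiedStructureConstants}). As a final consistency check (and a faster way to confirm the trivial connection is recovered) I would substitute $T^c_{ab}=-C^c_{ab}$ into the Christoffel formula (\ref{eq:Christoffel}) of Lemma \ref{lemma:nablaEi}; every term cancels pairwise and yields $\Gamma^k_{ab}=0$.

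The main obstacle is bookkeeping rather than conceptual: it is easy to err in tracking which $\alpha$-components vanish under the mixed-index convention, and one must consistently exploit $C^{n+k}_{ij}=C^c_{i,n+j}=C^c_{n+i,n+j}=0$ so that the Nijenhuis contributions in the original formulas (\ref{eq:LeftInvariantHermitianTorsion1})--(\ref{eq:LeftInvariantHermitianTorsion6}) really do collapse to the simpler expressions (\ref{eq:HA_LeftInvariantHermitianTorsion1})--(\ref{eq:HA_LeftInvariantHermitianTorsion6}). Once that bookkeeping is set up carefully, the verification is a direct substitution, and uniqueness is handed to us by Corollary \ref{cor:leftinvariantHermitianOneToOne}.
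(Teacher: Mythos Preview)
Your approach is correct and close in spirit to the paper's, but the execution differs in one useful way. The paper works directly with the quantities $\widehat{T}^c_{ab}=\tfrac{1}{2}(T^c_{ab}-T^a_{bc}-T^b_{ac})$, substituting the given $\alpha$ into formulas (\ref{eq:hatTijk})--(\ref{eq:hatTninjnk}) and checking case by case that $\widehat{T}^c_{ab}=-\widehat{C}^c_{ab}$, which yields $\Gamma^c_{ab}=\widehat{C}^c_{ab}+\widehat{T}^c_{ab}=0$ immediately. You instead compute the torsion $T^\alpha$ via (\ref{eq:HA_LeftInvariantHermitianTorsion1})--(\ref{eq:HA_LeftInvariantHermitianTorsion6}), match it with the torsion $-[\cdot,\cdot]$ of the trivial connection, and then invoke Proposition \ref{prop:TorsionUniqueness} (your ``consistency check'') to conclude $\nabla^\alpha=0$. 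Your route makes the role of the bijection explicit and is conceptually clean; the paper's route saves a step by going straight to the Christoffel symbols.

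One caution: your claim that ``the remaining four formulas vanish term-by-term'' is not accurate for (\ref{eq:HA_LeftInvariantHermitianTorsion4}). There the structure-constant terms $\tfrac{1}{4}C^i_{jk}+\tfrac{1}{4}C^j_{ki}$ survive, and they are cancelled by the \emph{nonzero} contributions $\tfrac{1}{2}\alpha^i_{n+j,n+k}=-\tfrac{1}{4}C^i_{jk}$ and $\tfrac{1}{2}\alpha^j_{n+k,n+i}=-\tfrac{1}{4}C^j_{ki}$ (recall $\alpha^k_{n+i,n+j}=-\tfrac{1}{2}C^k_{ij}$ is part of the prescription). The outcome is still $0$, but be sure to track this cancellation when you write out the computation in full.
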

\begin{proof}
    Following the previous notation, let $\alpha^c_{ab}:=\mg{\alpha(e_a,e_b),e_c}$.   Let $T:=T^\alpha$ be the torsion associated to $\nabla^\alpha$.  We need to show that $\Gamma^c_{ab}=0$ for all $1\le a,b,c\le 2n$. From Lemma \ref{lemma:nablaEi} and the definition of $\widehat{C}^c_{ab}$ and $\widehat{T}^c_{ab}$, we have $\Gamma^c_{ab}=\widehat{C}^c_{ab}+\widehat{T}^c_{ab}$.  So we need to show  that $\widehat{T}^c_{ab}=-\widehat{C}^c_{ab}$ for the above choice of $\alpha$.  (Uniqueness of $\alpha$ follows from Corollary \ref{cor:HermitianOnetoOne}.)  From (\ref{eq:hatTijk}), we have
    \begin{align*}
        \widehat{T}^k_{ij}&=-\frac{1}{4}C^k_{ij}+\frac{1}{4}C^j_{ik}+\frac{1}{2}C^i_{jk}+\frac{1}{2}\alpha^k_{ij}-\frac{1}{2}\alpha^j_{ik}-\frac{1}{2}\alpha^{n+j}_{i,n+k}+\frac{1}{2}\alpha^{n+k}_{i,n+j}\\
        &=-\frac{1}{4}C^k_{ij}+\frac{1}{4}C^j_{ik}+\frac{1}{2}C^i_{jk}-\frac{1}{4}C^k_{ij}+\frac{1}{4}C^j_{ik}\\
        &=-\frac{1}{2}C^k_{ij}+\frac{1}{2}C^j_{ik}+\frac{1}{2}C^i_{jk}\\
        &=-\frac{1}{2}(C^k_{ij}-C^i_{jk}-C^j_{ik})\\
        &=-\widehat{C}^k_{ij}
    \end{align*}
    Since $\widehat{C}^c_{ab}=0$ whenever one of its indices is greater than $n$, it only remains to show that $\widehat{T}^c_{ab}=0$ for $a>n$, $b>n$, or $c>n$.  Equations (\ref{eq:hatTinjnk}), (\ref{eq:hatTnijnk}), and (\ref{eq:hatTninjk}) respectively give
    \begin{align*}
        \widehat{T}^{n+k}_{i,n+j}&=-\frac{1}{4}C^j_{ik}+\frac{1}{4}C^k_{ij}+\frac{1}{2}\alpha^{n+k}_{i,n+j}-\frac{1}{2}\alpha^{n+j}_{i,n+k}-\frac{1}{2}\alpha^j_{ik}+\frac{1}{2}\alpha^k_{ij}\\
        &=-\frac{1}{4}C^j_{ik}+\frac{1}{4}C^k_{ij}+\frac{1}{4}C^j_{ik}-\frac{1}{4}C^k_{ij}\\
        &=0
    \end{align*}
    \begin{align*}
        \widehat{T}^{n+k}_{n+i,j}&=-\frac{1}{4}C^j_{ik}-\frac{1}{4}C^k_{ij}-\frac{1}{2}\alpha^{n+k}_{i,n+j}-\frac{1}{2}\alpha^j_{ik}-\frac{1}{2}\alpha^{n+j}_{i,n+k}-\frac{1}{2}\alpha^k_{ij}\\
        &=-\frac{1}{4}C^j_{ik}-\frac{1}{4}C^k_{ij}+\frac{1}{4}C^j_{ik}+\frac{1}{4}C^k_{ij}\\
        &=0
    \end{align*}
`   \begin{align*}
        \widehat{T}^k_{n+i,n+j}&=\frac{1}{4}C^k_{ij}+\frac{1}{4}C^j_{ik}+\frac{1}{2}\alpha^k_{ij}+\frac{1}{2}\alpha^j_{ik}+\frac{1}{2}\alpha^{n+j}_{i,n+k}+\frac{1}{2}\alpha^{n+k}_{i,n+j}\\
        &=\frac{1}{4}C^k_{ij}+\frac{1}{4}C^j_{ik}-\frac{1}{4}C^k_{ij}-\frac{1}{4}C^j_{ik}\\
        &=0
    \end{align*}
    Lastly, from (\ref{eq:hatTijnk}), (\ref{eq:hatTinjk}), (\ref{eq:hatTnijk}), and (\ref{eq:hatTninjnk}), we immediately have
    $$
        \widehat{T}^{n+k}_{ij}=\widehat{T}^k_{i,n+j}=\widehat{T}^k_{n+i,j}=\widehat{T}^{n+k}_{n+i,n+j}=0
    $$
    This completes the proof.
\end{proof}
\begin{remark}
    The connection $\nabla^\alpha$ constructed in the proof of Proposition \ref{prop:HermitianFlat} is, in general, not a Gauduchon connection.  To see this, recall from (\ref{eq:GauduchonAlphaT}) that the $t$-Gauduchon connection is determined by $\alpha^t\in \wedge^{(1,1)}\mathfrak{g}^\ast\otimes \mathfrak{g}$ which is defined by
    $$
        \mg{\alpha^t(X,Y),Z}=\frac{t}{4}(d\omega)^+(JX,JY,JZ)+\frac{t}{4}(d\omega)^+(X,Y,JZ).
    $$
    Using (\ref{eq:domega+1})-(\ref{eq:domega+4}), we have for $1\le i,j,k\le n$
    \begin{align*}
        \mg{\alpha^t(e_i,e_j),e_k}&=\frac{t}{4}(d\omega)^+(e_{n+i},e_{n+j},e_{n+k})+\frac{t}{4}(d\omega)^+(e_i,e_j,e_{n+k})=-\frac{t}{4}C^k_{ij}\\
        \mg{\alpha^t(e_i,e_{n+j}),e_{n+k}}&=\frac{t}{4}(d\omega)^+(e_{n+i},e_j,e_k)-\frac{t}{4}(d\omega)^+(e_i,e_{n+j},e_k)=-\frac{t}{4}C^j_{ik}-\frac{t}{4}C^i_{jk}
    \end{align*}
    In order for $\alpha^t$ to agree with the $\alpha$ appearing in the proof of Proposition \ref{prop:HermitianFlat}, we require
    $$
      -\frac{t}{4}C^k_{ij}=-\frac{1}{2}C^k_{ij},\hspace*{0.2in}  -\frac{t}{4}C^j_{ik}-\frac{t}{4}C^i_{jk}=0
    $$
    For an arbitrary Lie group $H$, there is, in general, no $t\in \mathbb{R}$ which satisfies both equations simultaneously which shows that the connection appearing in the proof of Proposition \ref{prop:HermitianFlat} is a Hermitian connection which is, in general, not a $t$-Gauduchon connection.
\end{remark}

Let $\nabla^t$ be the $t$-Gauduchon connection whose curvature is $T^t$.  We now determine a sufficient condition on $H$ so that $\nabla^t$ (for some $t$) is trivial.  For notational convenience, we set $T:=T^t$ and (once again) $T^c_{ab}:=\mg{T(e_a,e_b),e_c}$.  We now calculate $\widehat{T}^c_{ab}:=\frac{1}{2}(T^c_{ab}-T^a_{bc}+T^b_{ca})$. Recall that $\widehat{C}_{ab}^c=\frac{1}{2}(C_{ab}^c-C_{bc}^a+C_{ca}^b)$. Then
\begin{align}
    \label{eq:hatTtijk}
    \widehat{T}^k_{ij}&=-\frac{1}{4}C_{ij}^{k}+\frac{2-t}{4}C_{jk}^{i}-\frac{1}{4}C_{ki}^{j}
    =-\frac{1}{2}\widehat{C}_{ij}^{k}+\frac{1-t}{4}C_{jk}^{i}\\
    \label{eq:hatTtinjnk}
    \widehat{T}^{n+k}_{i,n+j}&=\frac{1}{4}C_{ij}^{k}-\frac{t}{4}C_{jk}^i+\frac{1}{4}C_{ki}^{j}=\frac{1}{2}\widehat{C}_{ij}^{k}+\frac{1-t}{4}C_{jk}^i\\
    \label{eq:hatTtnijnk}
    \widehat{T}^{n+k}_{n+i,j}&=\frac{t-1}{4}C_{ij}^{k}+\frac{1-t}{4}C_{ki}^{j}
    =\frac{1-t}{2}\widehat{C}_{ki}^{j}+\frac{t-1}{4}C_{jk}^{i}\\
    \label{eq:hatTtninjk}
    \widehat{T}^k_{n+i,n+j}&=\frac{1-t}{4}C_{ij}^{k}+\frac{t-1}{4}C_{ki}^{j}
    =\frac{t-1}{2}\widehat{C}_{ki}^{j}+\frac{1-t}{4}C_{jk}^{i}\\
    \label{eq:hatTtzeros}
\widehat{T}^{n+k}_{ij}&=
    \widehat{T}^k_{i,n+j}=
    \widehat{T}^k_{n+i,j}=
    \widehat{T}^{n+k}_{n+i,n+j}=0.
\end{align}
In preparation for Theorem \ref{thm:GauduchonFlat}, we recall some basic facts from the theory of Lie groups (cf \cite{Hall2004, Warner1983, AB2010}):
\begin{lemma}
    \label{lemma:CompactAdInvariantMetric}
    Let $H$ be a compact Lie group.  Then $\mathfrak{h}:=\mbox{Lie}(H)$ admits a positive definite $\mbox{Ad}$-invariant metric $\eta$, that is, $\eta(\mbox{Ad}_h(X),\mbox{Ad}_h(Y))=\eta(X,Y)$ $\forall~h\in H$, $X,Y\in \mathfrak{h}$ where $\mbox{Ad}:H\rightarrow \mbox{GL}(\mathfrak{h})$ is the adjoint representation.
\end{lemma}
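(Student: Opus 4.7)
The plan is to invoke the classical averaging argument over Haar measure, which is the standard way to produce invariant tensors on compact Lie groups. Since the statement is asserted as a lemma to be quoted from the literature, the goal is simply to sketch the construction so the reader can verify it is self-contained.

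First, I would appeal to the fact that every compact Lie group $H$ admits a bi-invariant Haar measure $dh$ of finite total volume; normalize so that $\int_H dh = 1$. Next, choose any positive definite inner product $\eta_0$ on $\mathfrak{h}$ (for instance, the Euclidean inner product obtained by picking a basis). Define
\[
\eta(X,Y) := \int_H \eta_0\bigl(\mbox{Ad}_h(X), \mbox{Ad}_h(Y)\bigr)\, dh, \qquad X,Y\in \mathfrak{h}.
\]
Bilinearity and symmetry are immediate. Positive definiteness follows because the integrand is a continuous, non-negative function of $h$, and at $h=1$ the integrand equals $\eta_0(X,X) > 0$ whenever $X \neq 0$; hence $\eta(X,X) > 0$ for $X\neq 0$.

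For $\mbox{Ad}$-invariance, fix $h_0 \in H$ and compute
\[
\eta(\mbox{Ad}_{h_0}X, \mbox{Ad}_{h_0}Y) = \int_H \eta_0\bigl(\mbox{Ad}_{hh_0}X, \mbox{Ad}_{hh_0}Y\bigr)\, dh,
\]
using the multiplicativity $\mbox{Ad}_h \circ \mbox{Ad}_{h_0} = \mbox{Ad}_{hh_0}$. Right-invariance of $dh$ (valid on any compact group) allows the substitution $h \mapsto hh_0^{-1}$, which returns the integral to $\eta(X,Y)$. This gives the claimed invariance.

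The only genuine ingredient is the existence of a bi-invariant Haar measure on a compact Lie group, which is standard (see \cite{Hall2004, Warner1983}); the compactness hypothesis is essential because it guarantees the integral converges. No step here is an obstacle, so I would simply cite the existence of Haar measure and carry out the three lines of the averaging construction.
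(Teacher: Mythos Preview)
Your proposal is correct and follows essentially the same averaging argument as the paper: both start from an arbitrary inner product on $\mathfrak{h}$ and average over the group using a normalized invariant measure to produce an $\mbox{Ad}$-invariant one. The only cosmetic differences are that the paper integrates against $\mbox{Ad}_{h^{-1}}$ using a left-invariant volume form (and verifies invariance via left translation), whereas you integrate against $\mbox{Ad}_h$ and invoke right-invariance of Haar measure---these are equivalent formulations of the same construction.
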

\begin{proof}
    Fix an orientation on $H$ and let $\Omega$ be a left-invariant volume form such that 
    \begin{equation}
        \label{eq:OmegaPositiveOrientation}
        \int_H\Omega_h=1.
    \end{equation}
    Let $\eta$ be any left-invariant (Riemannian) metric on $H$.  Define $\widehat{\eta}: \mathfrak{h}\times \mathfrak{h}\rightarrow \mathbb{R}$ be defined by
    $$
        \widehat{\eta}(X,Y):=\int_H \eta(\mbox{Ad}_{h^{-1}}(X),\mbox{Ad}_{h^{-1}}(Y))\Omega_h
    $$
    for $X,Y\in \mathfrak{h}$.  Clearly $\widehat{\eta}$ is positive definite and, since $\widehat{\eta}(X,Y)\in \mathbb{R}$ for $X,Y\in \mathfrak{h}$, $\widehat{\eta}$ also induces a left-invariant Riemannian metric on $H$.  To see that $\widehat{\eta}$ is $\mbox{Ad}$-invariant, fix $X,Y\in \mathfrak{h}$ and define $f: H\rightarrow \mathbb{R}$ by 
    $$
        f(h):=\eta(\mbox{Ad}_{h^{-1}}(X),\mbox{Ad}_{h^{-1}}(Y))\in \mathbb{R},\hspace*{0.2in}h\in H.
    $$
    Let $x\in H$ be arbitrary and let $L_x: H\rightarrow H$ be left translation by $x$.  Then for $h\in H$
    \begin{align*}
        (L_x^\ast f)(h)&=f(xh)\\
        &=\eta(\mbox{Ad}_{(xh)^{-1}}(X),\mbox{Ad}_{(xh)^{-1}}(Y))\\
        &=\eta(\mbox{Ad}_{h^{-1}x^{-1}}(X),\mbox{Ad}_{(h^{-1}x^{-1}}(Y))\\
        &=\eta(\mbox{Ad}_{h^{-1}}\circ \mbox{Ad}_{x^{-1}}(X),\mbox{Ad}_{h^{-1}}\circ \mbox{Ad}_{x^{-1}}(Y))
    \end{align*}
    From this, we have  
    \begin{align*}
        \widehat{\eta}(\mbox{Ad}_{x^{-1}}X,\mbox{Ad}_{x^{-1}}(Y))&=\int_H\eta(\mbox{Ad}_{h^{-1}}\circ \mbox{Ad}_{x^{-1}}(X),\mbox{Ad}_{h^{-1}}\circ \mbox{Ad}_{x^{-1}}(Y))\Omega_h\\
        &=\int_H (L_x^\ast f)(h)\Omega_h\\
        &=\int_H(L_x^\ast f)(h)(L_x^\ast \Omega)_h\\
        &=\int_HL_x^\ast( f\Omega)_h\\
        &=\int_H f(h)\Omega_h\\
        &=\int_H\eta(\mbox{Ad}_{h^{-1}}(X),\mbox{Ad}_{h^{-1}}(Y))\Omega_h\\
        &=\widehat{\eta}(X,Y)
    \end{align*}
    where we have used the fact that $\Omega$ is left-invariant in the third equality and, in the fifth equality, we have used the fact that $L_x:H\rightarrow H$ is an orientation preserving diffeomorphism (since $L_x^\ast \Omega=\Omega$ and $\Omega$ is positively oriented by (\ref{eq:OmegaPositiveOrientation})).
\end{proof}
\begin{corollary}
    \label{cor:CsymmetryCompact}
    Let $H$ be a compact Lie group and let $\widehat{\eta}$ be a positive-definite $\mbox{Ad}$-invariant metric on $\mathfrak{h}$.  If $e_1,\dots, e_n$ is an orthonormal frame with respect to $\widehat{\eta}$ and $C^k_{ij}:=\eta([e_i,e_j],e_k)$, then $C^k_{ij}=-C^j_{ik}$.
\end{corollary}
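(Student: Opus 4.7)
The plan is to pass from the finite Ad-invariance of $\widehat{\eta}$ to its infinitesimal counterpart, namely the ad-skew-symmetry identity
\begin{equation*}
\widehat{\eta}([X,Y],Z) + \widehat{\eta}(Y,[X,Z]) = 0 \qquad \forall\, X,Y,Z \in \mathfrak{h},
\end{equation*}
and then read off the claim by specializing to the orthonormal frame. First I would fix $X \in \mathfrak{h}$ and consider the curve $h(t) := \exp(tX) \in H$. By Lemma~\ref{lemma:CompactAdInvariantMetric}, the scalar function $t \mapsto \widehat{\eta}(\mathrm{Ad}_{h(t)}Y, \mathrm{Ad}_{h(t)}Z)$ is constant for every $Y,Z \in \mathfrak{h}$. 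Differentiating at $t=0$, and using the standard identity $\tfrac{d}{dt}\big|_{t=0}\mathrm{Ad}_{\exp(tX)}W = [X,W]$, produces the displayed ad-skew-symmetry identity by the product rule.

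Next I would specialize: taking $X = e_i$, $Y = e_j$, $Z = e_k$ in the identity yields
\begin{equation*}
\widehat{\eta}([e_i,e_j],e_k) = -\widehat{\eta}(e_j,[e_i,e_k]),
\end{equation*}
which is exactly $C^k_{ij} = -C^j_{ik}$ by the definition of the structure constants. No use of the orthonormality of $\{e_i\}$ is actually required beyond the fact that $C^k_{ij}$ is defined as a pairing against $e_k$ via $\widehat{\eta}$; orthonormality is only invoked implicitly by the notation $C^k_{ij}$ for $\widehat{\eta}([e_i,e_j],e_k)$ rather than for the coefficient in an expansion.

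There is no real obstacle here; the only point that warrants care is making sure the differentiation of $\mathrm{Ad}_{\exp(tX)}$ at the identity genuinely produces $\mathrm{ad}_X = [X,\,\cdot\,]$, which is a standard fact about matrix (or abstract) Lie groups that can be cited from, e.g., \cite{Hall2004, Warner1983}. Everything else is a one-line substitution, so the proof is short.
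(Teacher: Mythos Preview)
Your proposal is correct and follows essentially the same argument as the paper: differentiate the Ad-invariance of $\widehat{\eta}$ along $\exp(tX)$ at $t=0$ to obtain the ad-skew-symmetry identity $\widehat{\eta}([X,Y],Z)+\widehat{\eta}(Y,[X,Z])=0$, then specialize to $X=e_i$, $Y=e_j$, $Z=e_k$. Your side remark that orthonormality is not actually used in the argument is accurate and worth noting.
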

\begin{proof}
    Let $X\in \mathfrak{h}$ and let $h(t)=\mbox{exp}(tX)$.  Then for $Y,Z\in \mathfrak{h}$, $\mbox{Ad}$-invariance of $\widehat{\eta}$ gives
    \begin{align*}
        \widehat{\eta}(\mbox{Ad}_{h(t)}Y,\mbox{Ad}_{h(t)}Z)=\widehat{\eta}(Y,Z).
    \end{align*}
    Differentiating both sides at $t=0$ gives
    $$
        \widehat{\eta}(\mbox{ad}_XY,Z)+\widehat{\eta}(Y,\mbox{ad}_XZ)=0.
    $$
    Hence,
    $$
        \widehat{\eta}([X,Y],Z)=-\widehat{\eta}(Y,[X,Z]).
    $$
    Taking $X=e_i$, $Y=e_j$, and $Z=e_k$ gives
    $$
        C_{ij}^k=-C_{ik}^j
    $$
\end{proof}
\begin{remark}
    \label{rmk:SemisimpleKillingForm}
    The proof of Lemma \ref{lemma:CompactAdInvariantMetric} provides a means of constructing a positive definite \mbox{Ad}-invariant metric on the Lie algebra $\mathfrak{h}$ of a compact Lie group $H$.  However, as the construction requires an integration, it would be nice to have an alternate way of obtaining a positive definite $\mbox{Ad}$-invariant metric on the Lie algebra. For the case when $H$ is both compact and semisimple, one can obtain such a metric by turning to the Killing form.  Specifically, if $H$ is both compact and semisimple, then the Killing form $\mathcal{K}$ which we recall is defined by
    $$
        \mathcal{K}(X,Y)=\mbox{Tr}(\mbox{ad}_X\circ \mbox{ad}_Y)
    $$
    is a negative definite $\mbox{Ad}$-invariant metric on $\mathfrak{h}$ where $\mbox{Tr}$ denotes the trace (see Theorem 2.28 of \cite{AB2010}).  Hence, if $\lambda<0$ is any negative number, then $\widehat{\eta}:=\lambda \mathcal{K}$ is a positive definite $\mbox{Ad}$-invariant metric on $\mathfrak{h}$.
\end{remark}
\noindent We now come to the main result of this section.
\begin{theorem}
    \label{thm:GauduchonFlat}
    Let $H$ be an $n$-dimensional compact Lie group and let $A$ be any $n$-dimensional abelian Lie group.  Then $G=H\times A$ admits a left-invariant almost Hermitian structure $(J,\mg{\cdot,\cdot})$ such that 
    \begin{itemize}
        \item[(1)] $J$ is totally real with respect to $\mathfrak{h}:=\mbox{Lie}(H)$, and
        \item[(2)] $\nabla^t=0$ for $t=2$ and has totally skew-symmetric torsion 
        \item[(3)] $\nabla^t$ is flat (and nontrivial) for $t=-2$
    \end{itemize}
    Moreover, if $T^B$ denotes the torsion of the Gauduchon connection $\nabla^2$ and $\beta\in \wedge^3\mathfrak{g}^\ast$ is the left-invariant 3-form defined by $\beta(X,Y,Z):=\mg{T^B(X,Y),Z}$, then $\beta$ is closed.
\end{theorem}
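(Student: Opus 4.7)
The strategy is: (a) build $(J,\mg{\cdot,\cdot})$ using an $\mbox{Ad}$-invariant inner product on $\mathfrak{h}$, (b) identify the $t=2$ Gauduchon connection with the trivial Hermitian connection of Proposition \ref{prop:HermitianFlat}, (c) compute the Christoffel symbols of $\nabla^{-2}$ and invoke Jacobi for flatness, and (d) recognize $\beta$ as a pullback of the Cartan 3-form on $H$. By Lemma \ref{lemma:CompactAdInvariantMetric}, the compact group $H$ carries an $\mbox{Ad}$-invariant metric $\widehat{\eta}$ on $\mathfrak{h}$; pick an orthonormal basis $e_1,\dots,e_n$ of $(\mathfrak{h},\widehat{\eta})$ and any basis $e_{n+1},\dots,e_{2n}$ of $\mathfrak{a}$, let $\mg{\cdot,\cdot}$ make the whole frame orthonormal, and define $Je_i=e_{n+i}$. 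Then $(e_{(1)}\mid e_{(2)})$ is a standard frame, $J\mathfrak{h}=\mathfrak{a}$ gives (1), and Corollary \ref{cor:CsymmetryCompact} together with skew-symmetry of the bracket forces $C^k_{ij}$ to be totally antisymmetric in $(i,j,k)$; in particular $C^i_{jk}+C^j_{ik}=0$ and $\widehat{C}^k_{ij}=\tfrac12 C^k_{ij}$, which is the key algebraic cancellation driving the rest.

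For (2), the remark following Proposition \ref{prop:HermitianFlat} already reduces the question $\nabla^t=0$ to the two conditions $-\tfrac{t}{4}C^k_{ij}=-\tfrac12 C^k_{ij}$ and $-\tfrac{t}{4}(C^j_{ik}+C^i_{jk})=0$ (the remaining components of $\alpha^t$ matching automatically thanks to (\ref{eq:SimplifiedStructureConstants})). The first condition forces $t=2$, and the second is automatic in our setup by $\mbox{Ad}$-invariance, so Corollary \ref{cor:HermitianOnetoOne} yields $\nabla^2=0$. Consequently $T^B(X,Y)=T^2(X,Y)=-[X,Y]$, which vanishes as soon as one argument lies in $\mathfrak{a}$; on $\mathfrak{h}\times\mathfrak{h}$ it satisfies $\mg{T^2(X,Y),Z}=-\mg{[X,Y],Z}$, a 3-tensor totally skew in $(X,Y,Z)$ precisely by $\mbox{Ad}$-invariance.

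For (3), substituting $t=-2$ into (\ref{eq:hatTtijk})--(\ref{eq:hatTtzeros}) and applying the cyclic identities for $C^k_{ij}$ gives $\widehat{T}^k_{ij}=\tfrac12 C^k_{ij}$, $\widehat{T}^{n+k}_{i,n+j}=C^k_{ij}$, and $\widehat{T}^{n+k}_{n+i,j}=\widehat{T}^k_{n+i,n+j}=0$, along with the four vanishings in (\ref{eq:hatTtzeros}). Combined with the vanishing of $\widehat{C}$ off pure-$\mathfrak{h}$ input, Lemma \ref{lemma:nablaEi} produces $\nabla^{-2}_{e_i}e_j=[e_i,e_j]$, $\nabla^{-2}_{e_i}e_{n+j}=J[e_i,e_j]$, and $\nabla^{-2}_{e_{n+i}}\equiv 0$ on the standard frame. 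Plugging this into Proposition \ref{prop:CurvatureLeftInvariant}, every curvature term whose first two slots contain an index in $\mathfrak{a}$ vanishes term-by-term (both Christoffel factors are zero and $[e_a,e_b]=0$), while the remaining configuration collapses to
\[
R(e_i,e_j)e_c=[e_i,[e_j,e_c]]-[e_j,[e_i,e_c]]-[[e_i,e_j],e_c]=0
\]
by the Jacobi identity (the case $c>n$ being handled via $R\circ J=J\circ R$, since $[e_i,e_{n+k}]=0$). Nontriviality is immediate whenever $H$ is nonabelian, as $\nabla^{-2}_{e_i}e_j=[e_i,e_j]\neq 0$ for some $i,j$.

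For the final assertion, the formula for $T^B$ from (2) identifies $\beta=-\pi^{\ast}\Omega_H$, where $\pi\colon H\times A\to H$ is the projection and $\Omega_H(X,Y,Z):=\mg{[X,Y],Z}$ is the Cartan 3-form on $(H,\widehat{\eta})$; since $d$ commutes with $\pi^{\ast}$, closedness of $\beta$ reduces to closedness of $\Omega_H$, which is the classical fact that the Cartan 3-form on a Lie group with an $\mbox{Ad}$-invariant inner product is closed (a direct consequence of the Jacobi identity together with the $\mbox{Ad}$-invariance identity $\mg{[A,B],C}=-\mg{B,[A,C]}$). The main obstacle in the overall argument is the curvature bookkeeping in (3): without the observation that $\nabla^{-2}_{e_{n+i}}\equiv 0$ on the whole standard frame, the many index configurations of Proposition \ref{prop:CurvatureLeftInvariant} would be unwieldy; with it, every configuration either vanishes trivially or collapses to a single application of Jacobi.
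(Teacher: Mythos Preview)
Your proof is correct, and the setup (Ad-invariant metric, standard frame, total antisymmetry of $C^k_{ij}$) matches the paper's exactly. The arguments for (2), (3), and the SKT claim, however, take a different route from the paper's.

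For (2) you invoke Proposition \ref{prop:HermitianFlat} together with the subsequent remark to match $\alpha^t$ with the $\alpha$ giving the trivial connection, whereas the paper works directly from (\ref{eq:hatTtijk})--(\ref{eq:hatTtzeros}) to show $\Gamma^c_{ab}=\widehat{C}^c_{ab}+\widehat{T}^c_{ab}=0$ at $t=2$. Your approach is cleaner but leans on the claim that the remaining components of $\alpha^t$ vanish automatically under (\ref{eq:SimplifiedStructureConstants}); this is true (one checks $(d\omega)^+(e_i,e_j,e_k)=(d\omega)^+(e_i,e_{n+j},e_{n+k})=0$ from Lemma \ref{lemma:domega+}), but it deserves a sentence since the remark only displays two of the four independent component types.

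For (3) the difference is more substantial. The paper keeps $t$ free, feeds $\widehat{C}$ and $\widehat{T}$ into Corollary \ref{cor:HermitianCurvature}, and obtains the closed form $R^t_{ijkl}=R^t_{ij,n+k,n+l}=(\tfrac{t^2}{16}-\tfrac14)\sum_p C^p_{ij}C^l_{pk}$, which exhibits $t=\pm 2$ simultaneously as roots. You instead specialize to $t=-2$, observe the clean structural formulas $\nabla^{-2}_{e_i}e_j=[e_i,e_j]$, $\nabla^{-2}_{e_i}e_{n+j}=J[e_i,e_j]$, $\nabla^{-2}_{e_{n+i}}\equiv 0$, and reduce flatness to a single Jacobi identity. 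Your argument is more conceptual and avoids the index bookkeeping of Corollary \ref{cor:HermitianCurvature}; the paper's argument has the advantage of explaining \emph{why} $t=-2$ appears, as the second root of a quadratic.

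For closedness of $\beta$ the paper computes $d\beta$ directly from the left-invariant formula and applies Jacobi together with $C^k_{ij}=-C^j_{ik}$; you recognize $\beta=-\pi^\ast\Omega_H$ with $\Omega_H$ the Cartan $3$-form and cite its classical closedness. Both are fine; yours is shorter and more structural.
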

\begin{proof}
    Since $H$ is compact, there exists a positive definite $\mbox{Ad}$-invariant metric $\widehat{\eta}$ on $\mathfrak{h}$ by Lemma \ref{lemma:CompactAdInvariantMetric}. Let $e_1,\dots, e_n$ be any left-invariant frame on $H$ which is orthonormal with respect to $\widehat{\eta}$ and let $C^k_{ij}:=\widehat{\eta}([e_i,e_j],e_k)$.  By Corollary \ref{cor:CsymmetryCompact}, the structure constants satisfy the symmetry condition $C^k_{ij}=-C^j_{ik}$.  Let $e_{n+1},\dots, e_{2n}$ be any left-invariant frame on $A$.  Define $(J,\mg{\cdot,\cdot})$ to be the left-invariant almost Hermitian structure on $G=H\times A$ so that $e_1,\dots,e_{2n}$ is a standard frame for $(J,\mg{\cdot,\cdot})$. (Note that from the definition of standard frame, we see that $\mg{\cdot,\cdot}$ restricted to $e_1,\dots, e_n$ is precisely $\widehat{\eta}$.) From the symmetry condition on $C^k_{ij}$ for $1\le i,j,k,\le n$, we immediately have
    $$
        \widehat{C}^k_{ij}=\frac{1}{2}(C^k_{ij}-C^i_{jk}-C^j_{ik})=\frac{1}{2}C^k_{ij}.
    $$
    From (\ref{eq:hatTtijk})-(\ref{eq:hatTtzeros}), all of the $\widehat{T}^c_{ab}$ components associated to $\nabla^t$ vanish except possibly $\widehat{T}^k_{ij}$ and $\widehat{T}^{n+k}_{i,n+j}$ which reduce to 
    $$
        \widehat{T}^k_{ij}=-\frac{t}{4}C^k_{ij},\hspace*{0.2in}\widehat{T}^{n+k}_{i,n+j}=\frac{2-t}{4}C^k_{ij}
    $$
    Setting $t=2$, we obtain $\widehat{T}^c_{ab}=0$ whenever one of its indices is greater than $n$ and $\widehat{T}^k_{ij}=-\frac{1}{2}C^k_{ij}=-\widehat{C}^k_{ij}$.  From this, it follows that $\Gamma^c_{ab}:=\mg{\nabla^2_{e_a}e_b,e_c}=0$.  This implies that the torsion $T^2$ of $\nabla^2$ is given by
    \begin{equation}
    	\label{eq:SKT1}
        \mg{T^2(e_a,e_b),e_c}=-\mg{[e_a,e_b],e_c}=-C^c_{ab}
    \end{equation}
    Since $C^k_{ij}=-C^j_{ik}$ for $1\le i,j,k\le n$ and $C^c_{ab}=0$ whenever $a$, $b$, or $c$ exceeds $n$, it follows that $\mg{T^2(X,Y),Z}$ is totally skew-symmetric.  This completes the proof of (1) and (2).

    For (3), consider $\nabla^t$ for arbitrary $t$.  Given the values of $\widehat{C}^c_{ab}$ and $\widehat{T}^c_{ab}$ above, it follows from Corollary \ref{cor:HermitianCurvature} that all the components of $R^t$ are zero except possibly $R^t_{ijkl}$ and $R^t_{ij,n+k,n+l}$ (where $1\le i,j,k,l\le n$) which reduce to 
    \begin{align*}
        R^t_{ijkl}&=\frac{1}{4}\sum_{p=1}^n\left(C^l_{ip}C^p_{jk}-C^l_{jp}C^p_{ik}-2C^p_{ij}C^l_{pk}\right)+\frac{t^2}{16}\sum_{p=1}^n\left(C^l_{ip}C^p_{jk}-C^l_{jp}C^p_{ik}\right)\\
        &-\frac{t}{8}\sum_{p=1}^n\left(C^l_{ip}C^p_{jk}+C^p_{jk}C^l_{ip}-C^l_{jp}C^p_{ik}-C^l_{jp}C^p_{ik}-2C^p_{ij}C^l_{pk}\right)
    \end{align*}
    \begin{align*}
        R^t_{ij,n+k,n+l}&=\frac{2-t}{4}\sum_{p=1}^n\left(\frac{2-t}{4}C^l_{ip}C^p_{jk}-\frac{2-t}{4}C^l_{jp}C^p_{ik}-C^p_{ij}C^l_{pk}  \right)
    \end{align*}
    The Jacobi identity expressed in terms of the structure constants is 
    $$
        \sum_{p=1}^n\left(C^p_{ij}C_{pk}^l+C^p_{jk}C^l_{pi}+C^p_{ki}C^l_{pj}\right)=0
    $$
    Applying the Jacobi identity to $R^t_{ijkl}$ and $R^t_{ij,n+k,n+l}$ gives
    $$
        R^t_{ijkl}=R^t_{ij,n+k,n+l}=\left(\frac{t^2}{16}-\frac{1}{4}\right)\sum_{p=1}^nC^p_{ij}C^l_{pk}
    $$
    From this, we see that $R^t_{ijkl}=R^t_{ij,n+k,n+l}=0$ for $t=2$ (which we recognize as the trivial left-invariant connection) and $t=-2$.  This completes the proof of (3).
    
    For the last statement, let $T^B:=T^2$ (where we use ``$B$" to emphasize that the $t=2$-Gauduchon connection corresponds to the Bismut (or Strominger) connection in the integrable case).  From (\ref{eq:SKT1}), we have 
    $$
    	\beta(X,Y,Z):=\mg{T^B(X,Y),Z}=-\mg{[X,Y],Z}\hspace*{0.2in}\forall~X,Y,Z\in \mathfrak{g},
    $$
    where the above calculation shows that $\beta$ is a (left-invariant) 3-form.  For $W,X,Y,Z\in \mathfrak{g}$, we have
    \begin{align}
    	\nonumber
    	d\beta(W,X,Y,Z)&=-\beta([W,X],Y,Z)+\beta([W,Y],X,Z)-\beta([W,Z],X,Y)\\
	\nonumber
	&-\beta([X,Y],W,Z)+\beta([X,Z],W,Y)-\beta([Y,Z],W,X)\\
	\nonumber
	&=\mg{[[W,X],Y],Z}-\mg{[[W,Y],X],Z}+\mg{[[W,Z],X],Y}\\
	\nonumber
	&+\mg{[[X,Y],W],Z}-\mg{[[X,Z],W],Y}+\mg{[[Y,Z],W],X}\\
	\nonumber
	&=\mg{[[W,X],Y]+[[X,Y],W] +[[Y,W],X] ,Z}\\
	\nonumber
	&+\mg{[[W,Z],X]+[[Z,X],W],Y}+\mg{[[Y,Z],W],X}\\
	\label{eq:SKT2}
	&=\mg{[[W,X],Z],Y}+\mg{[[Y,Z],W],X}
    \end{align}
    where we have applied the Jacobi identity in the last equality.   It follows from (\ref{eq:SKT2}) that $d\beta(e_a,e_b,e_c,e_d)=0$ whenever any of the indices $a$, $b$, $c$, or $d$ exceeds $n$.  For $1\le i,j,k,l\le n$, we have
    \begin{align}
    \nonumber
    d\beta(e_i,e_j,e_k,e_l)&=\mg{[[e_i,e_j],e_l],e_k}+\mg{[[e_k,e_l],e_i],e_j}\\
    \nonumber
    &=\sum_{p=1}^n\left(C_{ij}^pC_{pl}^k+C_{kl}^pC_{pi}^j\right)\\
    \nonumber
    &=\sum_{p=1}^n\left(C_{ij}^pC_{pl}^k+C_{lk}^pC_{ip}^j\right)\\
    \nonumber
    &=\sum_{p=1}^n\left(C_{ij}^pC_{pl}^k+C_{lp}^kC_{ij}^p\right)\\
    &=0
    \end{align}
    where we have used the symmetry condition $C^k_{ij}=-C^j_{ik}$ in the fourth equality.  Hence, $d\beta=0$.
\end{proof}
\begin{remark}
\label{remark:HA}
It is interesting to note that the $t=2$-Gauduchon connection is the trivial left-invariant connection under the conditions of Theorem \ref{thm:GauduchonFlat} and has totally skew-symmetric torsion (which is not guaranteed for the non-integrable case-see Appendix \ref{appendix:Bismut}).  Recall from Section \ref{section:GauduchonConnection} that the $2$-Gauduchon connection corresponds to the Strominger or Bismut connection in the integrable case.  

With $T^B$ denoting the torsion of the $2$-Gauduchon connection, Theorem \ref{thm:GauduchonFlat} shows that the associated (left-invariant) 3-form $\beta(X,Y,Z):=\mg{T^B(X,Y),Z}$ is closed.  This is precisely the \textit{strong K\"{a}hler with torsion} (or SKT) condition which was first introduced in the context of physics  (cf \cite{Strominger1986,Gates1984}) and then became a topic of interest in complex geometry (see \cite{Fino2009} and the references therein).   In general, the 3-form $\beta$ induced by $T^B$ in Theorem \ref{thm:GauduchonFlat} is \textit{not} exact (see Example \ref{example:HA} below).  

The SKT condition has mainly been studied for the case where the almost complex structure is integrable.  In fact, the standard definition of an SKT manifold assumes a Hermitian manifold rather than an almost Hermitian one.  The reason, of course, is due to the fact that the existence of a Hermitian connection with totally skew-symmetric torsion is always guaranteed for a Hermitian manifold (and this connection is precisely the well-known Bismut/Strominger connection). So interestingly, Theorem \ref{thm:GauduchonFlat} yields a left-invariant almost Hermitian manifold $(G=H\times A,\mg{\cdot,\cdot},J)$ which satisfies the SKT condition despite the fact that $J$ is \textit{non-integrable} (when $H$ is nonabelian).  Following the terminology of the literature, $\mg{\cdot,\cdot}$ is an SKT-metric for $(G=H\times A,J)$.
\end{remark} 
\noindent We conclude the paper with the following example:
\begin{example}
    \label{example:HA}
    Let $G=SO(3)\times \mathbb{R}^3$ where $SO(3)$ is the Lie group of $3\times 3$ orthogonal (real) matrices of determinant $1$ and $\mathbb{R}^3$ is given its natural abelian Lie group structure.  Let $(J,\mg{\cdot,\cdot})$ be the left-invariant almost Hermitian structure on $G$ with standard frame $e_1,\dots, e_6$ where $e_4,e_5,e_6$ is any left-invariant frame on $\mathbb{R}^3$ and $e_1,e_2,e_3$ is the left invariant frame on $SO(3)$ whose bracket relations are 
    $$
        [e_1,e_2]=-e_3,\hspace*{0.1in}[e_1,e_3]=e_2,\hspace*{0.1in}[e_2,e_3]=-e_1.
    $$
    Note that the frame on $SO(3)$ has been chosen so that the symmetry condition $C^k_{ij}=-C^j_{ik}$ is satisfied for $1\le i,j,k\le 3$.  From the proof of Theorem \ref{thm:GauduchonFlat}, the $2$-Gauduchon connection $\nabla^2$ associated to $(SO(3)\times \mathbb{R}^3,J,\mg{\cdot,\cdot})$ is the trivial left-invariant connection and has totally skew-symmetric torsion.  In addition, the left-invariant $3$-form $\beta$ defined by
    $$
   	\beta(X,Y,Z):=\mg{T^B(X,Y),Z}=-\mg{[X,Y],Z},\hspace*{0.2in}\forall~X,Y,Z\in \mathfrak{g}
    $$ 
    is closed, where $T^B$ (again) denotes the torsion of $\nabla^2$.  Explicitly, we see that $\beta(e_1,e_2,e_3)=1$ and $\beta(e_a,e_b,e_c)=0$ whenever $a$, $b$, or $c$ exceeds $3$.  We now show that $\beta$ is not exact.  Let $\pi: SO(3)\times \mathbb{R}^3\rightarrow SO(3)$ be the natural projection map.  Since $\mathbb{R}^3$ is contractible, it follows that $\pi$ is a homotopy equivalence.  Hence, $\pi$ induces an isomorphism between the de Rham cohomology groups of $SO(3)\times \mathbb{R}^3$ and $SO(3)$.  In particular, we have
    $$
    	\pi^\ast: H^3(SO(3))\stackrel{\sim}{\longrightarrow} H^3(SO(3)\times \mathbb{R}^3).
    $$
    Since $SO(3)$ is compact, connected, and orientable of dimension $3$, we have $H^3(SO(3))\simeq \mathbb{R}$.  Any volume form on $SO(3)$ is then a generator of $H^3(SO(3))$.  Let $\mu$ be the left-invariant volume form on $SO(3)$ defined by $\mu(e_1,e_2,e_3)=1$.  Then $\beta=\pi^\ast \mu$ is a generator for $H^3(SO(3)\times \mathbb{R}^3)$ which shows that $\beta$ is not exact.
    
 The proof of Theorem \ref{thm:GauduchonFlat} also shows that $\nabla^{-2}$ is nontrivially flat.  Calculating the components $\Gamma^c_{ab}:=\mg{\nabla^{-2}_{e_a}e_b,e_c}$ we obtain the following:
    $$
        \Gamma^1_{23}=-1,~\Gamma^1_{32}=1,~\Gamma^2_{13}=1,~\Gamma^2_{31}=-1,~\Gamma^3_{12}=-1,~\Gamma^3_{21}=1
    $$
    $$
        \Gamma^4_{26}=-1,~\Gamma^4_{35}=1,~\Gamma^5_{16}=1,~\Gamma^5_{34}=-1,~\Gamma^6_{15}=-1,~\Gamma^6_{24}=1
    $$
    The cautious reader can verify directly that the connection defined by the above components is both Hermitian and flat.
\end{example}

\appendix
\section{}
\label{appendix:Bismut}
\noindent The following result was originally proved as part of Theorem 10.1 in \cite{FI2002}.  We give a proof\footnote{The authors were originally unaware of the existence of Theorem 10.1 in \cite{FI2002} at the time Proposition \ref{prop:BismutIntegrable} was proved.  The authors are grateful to Prof. Stefan Ivanov for sharing the reference to Theorem 10.1 in \cite{FI2002}.} of this interesting fact for the sake of completeness. 

\begin{proposition}
    \label{prop:BismutIntegrable}
    Let $(M,g,J,\omega)$ be an almost Hermitian manifold. Then  $(M,g,J,\omega)$ admits a Hermitian connection whose torsion tensor is totally skew-symmetric if and only if $g(N(X,Y),Z)$ is totally skew-symmetric. Moreover, if  $g(N(X,Y),Z)$ is totally skew-symmetric, then the Hermitian connection with totally skew-symmetric torsion is precisely the $t=2$-Gauduchon connection (see (\ref{eq:GauduchonTorsion})).  In this case, the torsion $T^2$ of the $t=2$-Gauduchon connection can be simplified to
    $$
        g(T^2(X,Y),Z)=d\omega(JX,JY,JZ)-g(N(X,Y),Z).
    $$
\end{proposition}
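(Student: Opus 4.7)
The plan is to handle the two implications, the uniqueness of the connection, and the simplified torsion formula in turn.

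For the ``if'' direction, formula (\ref{eq:GauduchonTorsion}) at $t=2$ gives $g(T^2(X,Y),Z) = -\tfrac14 g(N(X,Y),Z) + (d\omega)^+(JX,JY,JZ)$. The second summand is totally skew in $X,Y,Z$ since $(d\omega)^+$ is a $3$-form, and the first is totally skew by hypothesis; hence the $t=2$ Gauduchon connection has totally skew torsion. For the ``only if'' direction, set $\tau(X,Y,Z):=g(T(X,Y),Z)$ and use Lemma \ref{lemma:HermitianTorsion}(1) together with $g(JA,B)=-g(A,JB)$ to write
\[
g(N(X,Y),Z) = \tau(JX,Y,JZ)+\tau(X,JY,JZ)-\tau(X,Y,Z)+\tau(JX,JY,Z).
\]
Skew-symmetry in $(X,Y)$ is automatic; for skew-symmetry in $(X,Z)$ I would compute $g(N(X,Y),Z)+g(N(Z,Y),X)$ and verify that the eight resulting terms cancel in pairs using the total skew-symmetry of $\tau$ (for instance $\tau(JZ,Y,JX)=-\tau(JX,Y,JZ)$ and $\tau(JZ,JY,X)=-\tau(X,JY,JZ)$ by a single transposition of first and third arguments).

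For uniqueness of the connection, apply Lemma \ref{lemma:HermitianTorsion}(2) with $\omega(T(X,Y),Z)=-\tau(X,Y,JZ)$ and cyclic invariance of totally skew $3$-tensors to obtain $d\omega(X,Y,Z) = -[\tau(X,Y,JZ)+\tau(JX,Y,Z)+\tau(X,JY,Z)]$. Substituting $X\to JX$, $Y\to JY$, $Z\to JZ$ one at a time, and summing these three single-$J$ versions with twice the all-$J$ version, produces
\[
3\tau(X,Y,Z) = d\omega(JX,Y,Z)+d\omega(X,JY,Z)+d\omega(X,Y,JZ)+2\,d\omega(JX,JY,JZ),
\]
so $\tau$ is uniquely determined by $d\omega$; by Proposition \ref{prop:TorsionUniqueness} so is the connection, which must therefore coincide with $\nabla^2$.

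For the simplified formula, Proposition \ref{prop:3form(2,1)+(1,2)} gives $(d\omega)^+(JX,JY,JZ) = \tfrac14[3\,d\omega(JX,JY,JZ)+d\omega(X,Y,JZ)+d\omega(X,JY,Z)+d\omega(JX,Y,Z)]$. Combining this with the general identity
\[
d\omega(X,Y,JZ)+d\omega(X,JY,Z)+d\omega(JX,Y,Z)-d\omega(JX,JY,JZ) = -g(N(X,Y),Z)+g(N(X,Z),Y)-g(N(Y,Z),X),
\]
which I would verify by a direct expansion of both sides via the invariant formula for $d\omega$ and the definition of $N$, followed by regrouping the bracket contributions into $g(N,\cdot)$-pieces, and then invoking the total-skew hypothesis to collapse the right-hand side to $-3g(N(X,Y),Z)$, we obtain $(d\omega)^+(JX,JY,JZ) = d\omega(JX,JY,JZ)-\tfrac34 g(N(X,Y),Z)$. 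Substituting back into the $t=2$ Gauduchon expression yields the claimed formula.

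The main obstacle is the general identity in the last step: it is purely calculational, but requires careful bookkeeping of the twelve bracket contributions from four $d\omega$ evaluations, which must be regrouped into three Nijenhuis-type expressions to reveal the cancellation with the assumed total skew-symmetry of $g(N,\cdot)$.
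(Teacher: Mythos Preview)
Your proposal is correct, and the overall architecture differs from the paper's in a useful way. The paper establishes the forward implication by combining conditions (1) and (2) of Lemma~\ref{lemma:HermitianTorsion} to obtain directly the closed formula $g(T(X,Y),Z)=d\omega(JX,JY,JZ)-g(N(X,Y),Z)$ (equation (\ref{eq:BismutIntegrable4})); from this single formula it reads off both uniqueness and the skewness of $g(N,\cdot)$, and then performs a separate manipulation (via (\ref{eq:BismutIntegrable6})) to identify $T$ with $T^2$. You instead keep the two conditions of Lemma~\ref{lemma:HermitianTorsion} separate: condition (1) alone gives the ``only if'' direction by the four-term cancellation you describe, and condition (2) alone gives uniqueness via the explicit formula $3\tau = d\omega(JX,Y,Z)+d\omega(X,JY,Z)+d\omega(X,Y,JZ)+2\,d\omega(JX,JY,JZ)$; the identification with $\nabla^2$ then follows by uniqueness together with the ``if'' direction. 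This separation is cleaner conceptually.

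Your key step for the simplified formula is the general identity
\[
d\omega(X,Y,JZ)+d\omega(X,JY,Z)+d\omega(JX,Y,Z)-d\omega(JX,JY,JZ)=-g(N(X,Y),Z)+g(N(X,Z),Y)-g(N(Y,Z),X),
\]
which you propose to verify by direct expansion. This identity is indeed true in full generality (the derivative contributions from the invariant formula for $d\omega$ cancel pairwise, and the bracket contributions regroup into the three Nijenhuis terms exactly as you anticipate). The paper's equation (\ref{eq:BismutIntegrable6}) is precisely the specialization of your identity under the total-skew hypothesis, but the paper derives it only after assuming the skew-torsion connection exists, whereas you obtain it unconditionally. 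Your route therefore yields a slightly stronger intermediate result, at the cost of a longer bookkeeping computation; the paper's route is shorter but more tightly coupled to the specific hypotheses.
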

\begin{proof}
    Suppose $\nabla$ is a Hermitian connection such that $g(T(X,Y),Z)$ is skew-symmetric in $X$, $Y$, and $Z$.  By expanding $T_J$ in condition (1) of Lemma \ref{lemma:HermitianTorsion}, condition (1) can be rewritten as
    \begin{equation}
        \label{eq:BismutIntegrable1}
        g(T(JX,JY),Z)=g(N(X,Y),Z)+g(T(X,Y),Z)-g(T(JX,Y),JZ)-g(T(X,JY),JZ)
    \end{equation}
    Condition (2) of Lemma \ref{lemma:HermitianTorsion} can be rewritten as
    \begin{equation}
        \label{eq:BismutIntegrable2}
        d\omega(JX,JY,JZ)=g(T(JX,JY),Z)+g(T(JY,JZ),X)+g(T(JZ,JX),Y)
    \end{equation}
    Substituting (\ref{eq:BismutIntegrable1}) into (\ref{eq:BismutIntegrable2}) and then using the assumption that $g(T(\cdot,\cdot),\cdot)$ is skew-symmetric in its arguments, we find that
    \begin{equation}
        \label{eq:BismutIntegrable4}
        g(T(X,Y),Z)=d\omega(JX,JY,JZ)-g(N(X,Y),Z)
    \end{equation}
    This shows that if $T$ is totally skew-symmetric, then it must be unique.  Since the left side is totally skew-symmetric and $d\omega(J\cdot,J\cdot,J\cdot)$ is totally skew-symmetric, it follows that $g(N(X,Y),Z)$ is totally skew-symmetric.  We now show that (\ref{eq:BismutIntegrable4}) is precisely the torsion $T^2$ of the $t=2$ Gauduchon connection (\ref{eq:GauduchonTorsion}).  To do this, we first expand $g(T_J(X,Y),Z)$ with $T$ defined by (\ref{eq:BismutIntegrable4}).  This gives
    \begin{equation}
        \label{eq:BismutIntegrable5}
        g(T_J(X,Y),Z)=-d\omega(X,JY,Z)-d\omega(JX,Y,Z)+d\omega(JX,JY,JZ)-d\omega(X,Y,JZ)-4g(N(X,Y),Z)
    \end{equation}
    At the same time, condition (1) of Lemma \ref{lemma:HermitianTorsion} gives $g(T_J(X,Y),Z)=-g(N(X,Y),Z)$.  Substituting this into (\ref{eq:BismutIntegrable5}) gives
    \begin{equation}
        \label{eq:BismutIntegrable6}
        3g(N(X,Y),Z)=-d\omega(X,JY,Z)-d\omega(JX,Y,Z)+d\omega(JX,JY,JZ)-d\omega(X,Y,JZ)
    \end{equation}
    Applying (\ref{eq:BismutIntegrable6}) to (\ref{eq:BismutIntegrable4}), we obtain the following:
    \begin{align*}
        g(T(X,Y),Z)&=d\omega(JX,JY,JZ)-g(N(X,Y),Z)\\
        &=d\omega(JX,JY,JZ)-\frac{1}{4}\left(g(N(X,Y),Z)+3g(N(X,Y),Z)\right)\\
        &=-\frac{1}{4}g(N(X,Y),Z)+d\omega(JX,JY,JZ)-\frac{1}{4}3g(N(X,Y),Z)\\
        &=-\frac{1}{4}g(N(X,Y),Z)+d\omega(JX,JY,JZ)\\
        &-\frac{1}{4}\left(-d\omega(X,JY,Z)-d\omega(JX,Y,Z)+d\omega(JX,JY,JZ)-d\omega(X,Y,JZ)\right)\\
        &=-\frac{1}{4}g(N(X,Y),Z)+\frac{1}{4}\left(3d\omega(JX,JY,JZ)+d\omega(X,Y,JZ)+d\omega(X,JY,Z)+d\omega(JX,Y,Z) \right)\\
        &=-\frac{1}{4}g(N(X,Y),Z)+(d\omega)^+(JX,JY,JZ)\\
        &=g(T^2(X,Y),Z)
    \end{align*}
    where we applied (\ref{eq:BismutIntegrable6}) in the fourth equality, the second to last equality follows from Proposition \ref{prop:3form(2,1)+(1,2)}, and the last equality is $T^t$ in (\ref{eq:GauduchonTorsion}) for $t=2$.

    On the other hand, if $g(N(X,Y),Z)$ is totally skew-symmetric, then it follows that $g(T^2(X,Y),Z)$ is totally skew-symmetric.  The previous calculation shows that if a Hermitian connection has totally skew-symmetric torsion, then the Hermitian connection is unique and must be $T^2$.
\end{proof}



\end{document}